\newcommand{\N}{\mathbb{N}}
\newcommand{\Z}{{\mathbb Z}}
\renewcommand{\phi}{\varphi}
\newcommand{\into}{\hookrightarrow}
\newcommand{\coker}{\textup{coker}}
\theoremstyle{plain}
   \newtheorem*{theorem*}{Theorem}
    \newtheorem{theorem}{Theorem}[section]
    \newtheorem{lemma}[theorem]{Lemma}
    \newtheorem{corollary}[theorem]{Corollary}
    \newtheorem{proposition}[theorem]{Proposition}
\theoremstyle{definition}
    \newtheorem{remark}[theorem]{Remark}
\theoremstyle{remark}
\DeclareMathOperator{\id}{id}
\DeclareMathOperator{\tr}{tr}
\DeclareMathOperator{\Ell}{Ell}
\title[$\mathrm{C}^*$-algebras from $\mathbb{Z}$-actions and orbit-breaking subalgebras]{Classifiable $\mathrm{C}^*$-algebras from minimal $\mathbb{Z}$-actions and their orbit-breaking subalgebras}
\author[R.J. Deeley, I.F. Putnam, K.R. Strung]
{Robin J. Deeley \and
Ian F. Putnam \and
Karen R. Strung}
\address{Department of Mathematics,
University of Colorado Boulder
Campus Box 395,
Boulder, CO 80309-0395, USA }
\email{robin.deeley@gmail.com}
\address{Department of Mathematics and Statistics,
University of Victoria,
Victoria, B.C., Canada V8W 3R4} 
\email{ifputnam@uvic.ca}
\address{Institute of Mathematics, Czech Academy of Sciences, \v{Z}itn\'a 25, 115 67 Prague, Czech Republic}
\email{strung@math.cas.cz}
\date{\today}
\subjclass[2010]{37B05, 46L35, 46L85, 19K99}
\keywords{minimal homeomorphisms, equivalence relations, classification of nuclear \mbox{$\mathrm{C}^{*}$-algebras}}
\thanks{RJD is currently funded by NSF Grant DMS 2000057 and was previously funded by Simons Foundation Collaboration Grant for Mathematicians number 638449. KRS is currently funded by GA\v{C}R project 20-17488Y and \mbox{RVO: 67985840} and part of this work was carried out while funded by Sonata 9 NCN grant 2015/17/D/ST1/02529 and a Radboud Excellence Initiative Postdoctoral Fellowship. IFP is supported in part by an NSERC Discovery Grant.}
\begin{document}

\begin{abstract} In this paper we consider the question of what abelian groups can arise as the $K$-theory of $\mathrm{C}^*$-algebras arising from minimal dynamical systems.  We completely characterize the $K$-theory of the crossed product of a space $X$ with finitely generated $K$-theory by an action of the integers and show that crossed products by a minimal homeomorphisms exhaust the range of these possible $K$-theories. Moreover, we may arrange that the minimal systems involved are uniquely ergodic, so that their \mbox{$\mathrm{C}^*$-algebras} are classified by their Elliott invariants. We also investigate the $K$-theory and the Elliott invariants of orbit-breaking algebras.  We show that given arbitrary countable abelian groups $G_0$ and $G_1$ and any Choquet simplex $\Delta$ with finitely many extreme points, we can find a minimal orbit-breaking relation such that the associated \mbox{$\mathrm{C}^*$-algebra} has $K$-theory given by this pair of groups and tracial state space affinely homeomorphic to $\Delta$.  We also improve on the second author's previous results by using our orbit-breaking construction to $\mathrm{C}^*$-algebras of minimal amenable equivalence relations with real rank zero that allow torsion in both $K_0$ and $K_1$. These results have important applications to the Elliott classification program for $\mathrm{C}^*$-algebras. In particular, we make a step towards determining the range of the Elliott invariant of the $\mathrm{C}^*$-algebras associated to \'{e}tale equivalence relations.\end{abstract}

\maketitle

\section{Introduction} \label{Sect:Intro}

From its very beginnings, the study of operator algebras has had close ties to the study of dynamical systems. Already in the first papers of Murray and von Neumann we have the group-measure space construction, which constructs a crossed product von Neumann algebra from a group acting by probability measure-preserving transformations on a standard probability space \cite{MvN:Rings4}. This construction was in turn imported into the noncommutative topological setting of \mbox{$\mathrm{C}^*$-algebras} via topological groups acting on locally compact Hausdorff spaces, see for example \cite{EffrosHahn1967, Takesaki1967, Zeller-Meier1968, MR584266}.

The symbiotic relationship between these two areas has allowed for powerful applications from dynamics to operator algebras, and vice versa. For example, operator algebraic versions of the Rokhlin lemma were crucial for the type classification of von Neumann algebra factors, while von Neumann classification allows one to conclude strong results about the orbit equivalence classes of ergodic group actions. Similarly, on the $\mathrm{C}^*$-algebraic side, Elliott's classification of approximately finite (AF) algebras by so-called dimension groups inspired work by Krieger on the classification, up to eventual conjugacy, of shifts of finite types \cite{Kri:DimFun}. Bratteli diagrams were a crucial ingredient in the Giordano--Putnam--Skau classification of minimal actions on a Cantor set \cite{GioPutSkau:orbit}. More generally, such constructions were used to study topological equivalence relations called AF-equivalence relations \cite{MR2054051}. Noncommutative versions of the Rokhlin lemma continue to provide important ways to study the structure of $\mathrm{C}^*$-algebras and their group actions (for example, \cite{Kis:RohlinUHF, Izu:FreeRok1, HirWinZac:RokDim} and many others).

As eluded to above, the links between operator algebras and dynamics have been particularly strong with respect to classification. In the von Neumann setting, a single probability measure-preserving ergodic transformation on a standard Borel space gives rise to the unique hyperfinite $\mathrm{II}_1$ factor $\mathcal{R}$. More remarkably, Connes, Feldmann and Weiss showed that every countable Borel equivalence relation is orbit-equivalent to a single transformation, and thus the corresponding von Neumann algebraic constructions all once again yield $\mathcal{R}$ \cite{MR662736}. 

Classification for the analogous $\mathrm{C}^*$-algebras proved, unsurprisingly, more involved. Rather than a unique such algebra, we are faced with a whole class of nonisomorphic $\mathrm{C}^*$-algebras. Inspired by von Neumann classification, Elliott initiated the classification programme for nuclear $\mathrm{C}^*$-algebras with his classification of AF algebras \cite{Ell:AF}. He later conjectured that many more simple separable $\mathrm{C}^*$-algebras might be classified by an invariant---now called the Elliott invariant---consisting of $K$-theory and tracial data.

The recent spectacular achievement of the Elliott program is the classification of all simple, nuclear, separable, unital,  infinite-dimensional \mbox{$\mathrm{C}^*$-algebras} with finite nuclear dimension which satisfy the universal coefficient theorem (UCT). The nuclear dimension is a $\mathrm{C}^*$-algebraic version of topological covering dimension and  $\mathrm{C}^*$-algebras with finite nuclear dimension are much better behaved than those without this property. The UCT is a tool which, loosely speaking, allows one to transfer information between $KK$-theory and $K$-theory. While it is a crucial assumption, it holds for all known examples of nuclear $\mathrm{C}^*$-algebras, in particular for those which arise from minimal dynamical systems by \cite{Tu:Groupoids}. Let us call a $\mathrm{C}^*$-algebra \emph{classifiable} if it is simple, separable, unital, infinite-dimensional, has finite nuclear dimension, and satisfies the UCT. 

Every minimal dynamical action of the integers $(X, \varphi)$, where $X$ is an infinite compact Hausdorff space $X$ and the $\mathbb{Z}$-action is induced by a homeomorphism $\varphi$, gives rise to a minimal equivalence relation on $X$ where the equivalence classes are the $\varphi$-orbits.  While in ergodic theory the analogous notions turn out to be essentially the same \cite{MR662736}, in topological dynamics, minimal equivalence relations are more general than the class of orbit equivalence relations associated to minimal homeomorphisms.  In this paper, we are interested in equivalence relations with a strong topological dynamical flavour: the so-called \emph{orbit-breaking} relations. These also arise from a minimal dynamical system $(X, \varphi)$ with the additional input of a non-empty closed subset $Y \subset X$. If $Y$ meets every $\varphi$-orbit at most once, then we can break the equivalence class corresponding to any orbit that passes through $Y$ into two distinct equivalence classes corresponding to the forward and backward orbits from $Y$. Since $Y$ meets every $\varphi$-orbit at most once, these equivalence classes are still dense in $X$, and, taken together with the equivalence classes corresponding to orbits that do not pass through $Y$, we obtain a new minimal equivalence relation on $X$.  Note that orbit-breaking relations are specific to minimal integer actions. In the case of minimal actions by more general groups (even free abelian groups), it is not obvious how to break orbits in such a way that the resulting subequivalence relation will be an open subset. This is one reason why we focus here on integer actions.

There are many such orbit-breaking relations for which the associated \mbox{$\mathrm{C}^{*}$-algebras} cannot be isomorphic to the $\mathrm{C}^*$-algebra of any minimal dynamical system. Breaking at a single point in a Cantor minimal system, for example, results in an AF algebra, which has trivial $K_1$-group. The $\mathrm{C}^*$-algebra of any minimal dynamical system, on the other hand, always has non-trivial $K_1$-group. In particular, if we restrict ourselves to single transformations, we do not get an analogue to the von Neumann algebra relationship, however, it is still possible that minimal equivalence relations might exhaust, up to isomorphism, all stably finite classifiable $\mathrm{C}^*$-algebras. 

Constructions of Giol and Kerr show that there are examples of crossed products associated to minimal homeomorphisms of infinite compact metric spaces which have infinite nuclear dimension \cite{GioKerr:Subshifts}. Thus not all such crossed product $\mathrm{C}^*$-algebras---or their orbit-breaking subalgebras---will fall within the scope of the classification theorem. However, Elliott and Niu provided a sufficient condition for finite nuclear dimension when they showed that whenever $(X, \varphi)$ has mean dimension zero, the crossed product will be isomorphic to itself tensored with the Jiang--Su algebra, $\mathcal{Z}$ \cite{EllNiu:MeanDimZero}. The Jiang--Su algebra is a simple, separable, unital, nuclear, infinite-dimensional $\mathrm{C}^*$-algebra whose Elliott invariant is the same as the Elliott invariant of $\mathbb{C}$.   A $\mathrm{C}^*$-algebra $A$ is called $\mathcal{Z}$-stable if it absorbs the Jiang--Su algebra $\mathcal{Z}$ tensorially, $A \cong A \otimes \mathcal{Z}$. Finite nuclear dimension of a \mbox{$\mathrm{C}^*$-algebra} $A$ turns out to be equivalent to $\mathcal{Z}$-stability of $A$, see \cite{CETWW} (which was based on earlier work in \cite{MR3273581, MR3418247}). When combined with the theorem of Elliott and Niu, results of Archey, Buck and Phillips imply that mean dimension zero ensures that any orbit-breaking algebra is also $\mathcal{Z}$-stable and hence has finite nuclear dimension \cite{ArBkPh-Z}.

It follows from this that a classification theorem for the $\mathrm{C}^*$-algebras associated to minimal dynamical systems is close at hand. One fundamental remaining piece is determining the range of the Elliott invariant for such $\mathrm{C}^*$-algebras. This in turn is key to understanding the relationship between minimal dynamical systems, minimal equivalence relations, and stably finite classifiable $\mathrm{C}^*$-algebras, in analogy to the relationship between ergodic measure-preserving transformations, countable Borel equivalence relations and the hyperfinite $\mathrm{II}_1$ factor.  Is every classifiable $\mathrm{C}^*$-algebras isomorphic to the $\mathrm{C}^*$-algebra of a minimal equivalence relation?

A natural starting point to addressing this problem is to understand the possible range of $K$-theory for crossed products by minimal homeomorphisms and their orbit-breaking $\mathrm{C}^*$-algebras.

In \cite[Theorem 2.2]{DPS:MinHomKtheory}, which can be seen as the dynamical companion to this paper, the authors showed that given any pair $G_0$ and $G_1$ of finitely generated abelian groups, there exists a compact metric space $X$ admitting a minimal homeomorphism and satisfying $K^0(X) \cong \Z \oplus G_0$ and $K^1(X) \cong G_1$. In particular, there are no $K$-theoretical obstructions preventing a space from admitting a minimal homeomorphism in the finitely-generated case.  The proof of the statement uses a skew-product construction of Glasner and Weiss \cite{GlaWei:MinSkePro} as well as the authors' construction of minimal homeomorphisms on ``point-like" spaces---spaces whose $K$-theory and cohomology are the same as a point \cite{DPS:DynZ}. In the same paper, a variation on the construction allows us more control over the map induced by the homeomorphism on $K$-theory, and allows us to completely exhaust possible $K$-theory for crossed products arising from minimal homeomorphisms on spaces $X$ with finitely generated $K$-theory, see Theorem~\ref{thm:Kpossibiliities} and Theorem~\ref{thm:RealizingCPKtheory}: \\

\emph{Suppose that $X$ has finitely generated $K$-theory and $\varphi : X \to X$ is a (minimal) homeomorphism. Then there are $d \in \mathbb{N}$ and finite abelian groups $F_0$ and $F_1$ such that $K_j(C(X) \rtimes_\varphi \mathbb{Z}) \cong \mathbb{Z}^d \oplus F_j$, for j=0,1. Moreover, for any $d \in \mathbb{N}$ and finite abelian groups $F_0$ and $F_1$, there exists an infinite connected compact metric space $X$ admitting a minimal homeomorphism $\varphi :X \to X$ such that 
 $K_j(C(X) \rtimes_\varphi \mathbb{Z}) \cong \mathbb{Z}^d \oplus F_j$, for $j = 0,1$.}\\
 
We are able to arrange that $A := C(X) \rtimes_{\varphi} \mathbb{Z}$ above has a unique tracial state and that the order structure of $K_0(A)$ is given by \[ (K_0(A),  K_0(A)_+,  [1]) \cong ( \mathbb{Z}^d \oplus F_0, \,  \mathbb{Z}_{> 0} \oplus \mathbb{Z}^{d-1} \oplus F_0 \cup (0_{\mathbb{Z}^d},0_{F_0}),(1, 0_{\mathbb{Z}^{d-1}}, 0_{F_0}))\]
while the pairing map in the Elliott invariant  $r : T(A) \to SK_0(A)$ satisfies $r(\tau)((n_1, \dots, n_d, g)) = n_1$.
 
Next, we investigate the range of the Elliot invariant of $\mathrm{C}^*$-algebras obtained from orbit-breaking relations associated to a minimal dynamical system. Here again, the existence of minimal homeomorphisms of ``point-like" spaces given in \cite{DPS:DynZ} play a key role.  We can arrange that such a system is uniquely ergodic. After breaking the orbit at a single point, we are left with a minimal equivalence relation whose $\mathrm{C}^*$-algebra is $^*$-isomorphic to the Jiang--Su algebra. Building on this, we construct projectionless $\mathrm{C}^*$-algebras from orbit-breaking equivalence relations, see Corollary~\ref{cor:fewProjections}:     \\

\emph{Let $G_0$ and $G_1$ be arbitrary countable abelian groups and $\Delta$ a Choquet simplex with finitely many extreme points. There exists an orbit-breaking equivalence relation $\mathcal{R}$ of an infinite compact metric space such that 
\[K_0(\mathrm{C}^*(\mathcal{R}))  \cong \mathbb{Z} \oplus G_{0}, \qquad K_1(\mathrm{C}^*(\mathcal{R}))  \cong G_{1}, \qquad T(\mathrm{C}^*(\mathcal{R})) \cong \Delta.\]
Moreover, the positive cone of $K_0(\mathrm{C}^*(\mathcal{R}))$ is given by \[K_0(\mathrm{C}^*(\mathcal{R}))_+ \cong \{ (n, z) \mid n=0, z=0, \text{ or }n > 0 \}.\]
}
Finally, we also apply our orbit-breaking technique to minimal homeomorphisms on non-homogeneous spaces constructed by the authors in \cite{DPS:nonHom}, which are generalisations of systems constructed by Floyd \cite{floyd1949} and Gjerde and Johansen \cite{FloGjeJohSys}.  In this case, thanks to the existence of a factor map onto a Cantor minimal system, the associated $\mathrm{C}^*$-algebras will always have real rank zero. This allows for an extensive strengthening of the second author's constructions in \cite{MR3770169},  see Theorem~\ref{thm:KthRR0}. Note in particular that the group $T$ below can have torsion:\\

\emph{Let $T$ be a countable abelian group, $G_0$ a simple acyclic dimension group and $G_1$ a countable abelian group. Then there exists an amenable equivalence relation $\mathcal{R}$ such that $\mathrm{C}^*(\mathcal{R})$ has real rank zero and }
\[ (K_0(\mathrm{C}^{*}(\mathcal{R})), K_0(\mathrm{C}^*(\mathcal{R}))_+, [1]) \cong (T \oplus G_0, (G_0)_+, 1_{G_0}), \quad K_1(\mathrm{C}^*(\mathcal{R})) \cong G_1.\]
\vspace{.25mm}

Our results fit within the general question of determining which stably finite classifiable $\mathrm{C}^*$-algebras can be realized as the $\mathrm{C}^*$-algebra of a principal \'etale groupoid. Spectacular recent work on this question has been obtained by Li \cite{Li:Cartan} (also see \cite{AusMitra:GrpModGelfDual, BarLi:CartanUCT1, BarLi:CartanUCT2}). The main result in \cite{Li:Cartan} is that every stably finite classifiable $\mathrm{C}^*$-algebra can be realized as the $\mathrm{C}^*$-algebra of a \emph{twisted} principal \'etale groupoid.  There, rather than focusing on constructions coming from dynamics, Li mimics known inductive limit constructions cor the $\mathrm{C}^*$-algebras at the level of the groupoids.  Another difference is that the twist on the groupoid is only non-trivial when the $K_0$-group of the corresponding \mbox{$\mathrm{C}^*$-algebra} is torsion free. Here we are able to have torsion in $K$-theory without requiring any twists. 

The paper is organized as follows. In Section~\ref{Sect:Prelim}, we recall some facts about minimal dynamical systems, crossed products, \'etale groupoids, and the associated $\mathrm{C}^*$-algebras. In particular we recall the orbit-breaking equivalence relation that gives rise to orbit-breaking algebras. In Section~\ref{sec:classification}, we discuss the classification program for $\mathrm{C}^*$-algebras, necessary conditions for crossed products by minimal homeomorphisms and their orbit-breaking subalgebras to be classifiable, and recall the tools required for calculating their Elliott invariants. Section~\ref{Sect:Skew} focusses on computations for crossed products arising from the skew product systems constructed in \cite{DPS:MinHomKtheory}. We prove the existence of minimal homeomorphisms exhausting the possible $K$-theoretical range of crossed products of $C(X)$ when $X$ has finitely generated $K$-theory. In the final three sections, we turn our attention to minimal orbit-breaking relations and their $\mathrm{C}^*$-algebras. Technical results on the K-theory of orbit-breaking subalgebras are then proved in Section \ref{TechResultSection}.  Section~\ref{sec:orbitbreaking} deals with projectionless $\mathrm{C}^*$-algebras arising from skew-product systems and systems on point-like spaces. In Section~\ref{Subsect:RR0} we look at real rank zero $\mathrm{C}^*$-algebras obtained from the non-homogeneous minimal systems constructed in \cite{DPS:nonHom}. We expand on the results of the second author in \cite{MR3770169} to obtain examples with torsion in $K_0$ and arbitrary $K_1$. 

\section{Dynamical systems and \'etale groupoids} \label{Sect:Prelim}

By a \emph{dynamical system}, we mean a compact  Hausdorff space $X$, which for the purposes of this paper is always assumed to be metrizable, equipped 
with a homeomorphism $\varphi : X \to X$.  The \emph{$\varphi$-orbit} of a point $x$ in $X$ is the set $\{ \varphi^{n}(x) 
\mid n \in \mathbb{Z} \}$.

In the sequel, $\varphi$ will always induce a \emph{free} action of the integers on $X$, which is to say that if there exists $x \in X$ with $\varphi^{n}(x)=x$, then $n=0$. The homeomorphism $\varphi$ is \emph{minimal}, or $(X, \varphi)$ is a \emph{minimal dynamical system}, if the only closed subsets $Y \subseteq X$ with $\varphi(Y)=Y$ are $X$ and the empty set. This is equivalent to the condition that every  $\varphi$-orbit is dense in $X$. If $X$ is infinite, then any minimal dynamical system induces a free action.
 
Given two dynamical systems $(X, \varphi)$ and $(Y, \psi)$ a map $\pi : X \to Y$ is called  a \emph{factor map} if $\pi$  is a continuous surjection satisfying $\pi \circ \varphi = \psi \circ \pi$.   In this case, $(Y, \psi)$ is called a \emph{factor} of $(X, \varphi)$ and   $(X, \varphi)$ is a called an \emph{extension} of $(Y, \psi)$. If $\pi : (X, \varphi) \to (Y, \psi)$ is a factor map then  $\pi : (X, \varphi^n) \to (Y, \psi^n)$ is also a factor map for any integer $n$. A factor map $\pi : X \to Y$ is 
\emph{almost one-to-one} if it is one-to-one on a residual subset of $X$.

To minimal dynamical system $(X, \varphi)$ one may associate a topological groupoid, called the \emph{transformation groupoid} $X \rtimes_{\varphi} \mathbb{Z}$, see for example \cite{MR584266}. Here, since our systems will always be free,  it will be more convenient to reformulate the transformation groupoid as the orbit equivalence relation on $X$. Given a free dynamical system $(X, \varphi)$,  define the orbit equivalence relation 
\[
\mathcal{R}_{\varphi} := \{ (x, \varphi^{n}(x) ) \mid x \in X, n \in \mathbb{Z} \},
\]
which is an equivalence relation whose equivalence classes are simply the $\varphi$-orbits. As  the dynamical system is free, the map 
\[ X \rtimes_{\varphi} \mathbb{Z} \to \mathcal{R}_{\varphi}, \quad  (x, n) \mapsto (x, \varphi^{n}(x)) \] is 
 a bijection. We endow $X \rtimes_{\varphi} \mathbb{Z}$ with the product topology and equip $\mathcal{R}_{\varphi}$ with a topology via this map, 
 that is, $\mathcal{R}_{\varphi}$ is given the unique topology which makes this map a 
 homeomorphism. This endows the equivalence relation $\mathcal{R}_{\varphi}$ with an \'etale topology: the topology on an equivalence relation $\mathcal{R} \subset X \times X$ is \emph{\'etale} if the maps $\mathcal{R} \to X$ given by $(x,y) \mapsto x$ and $(x,y) \mapsto y$ are local homeomorphisms. (Note that in the topological groupoid literature the term \emph{equivalence relation} is sometimes reserved for an equivalence relation $\mathcal{R} \subset X \times X$ with topology inherited from the product topology on $X \times X$, whereas an equivalence relation equipped with any another topology is called a \emph{principal groupoid}.)
 
An equivalence relation on a compact metric space $X$ is \emph{minimal} if, for every $x \in X$, the equivalence class of $x$ is dense in $X$.  Observe that if $(X, \varphi)$ is a free dynamical system, then $\mathcal{R}_{\varphi}$ is minimal if and only if $(X, \varphi)$ is minimal.

Suppose $(X, \varphi)$ is a minimal dynamical system and $Y \subseteq X$ is a closed 
non-empty subset of $X$. We say that $Y$ \emph{meets every orbit at most once} if $\varphi^n(Y) \cap Y = \emptyset$ for each $n \neq 0$. Given a minimal dynamical system $(X, \varphi)$ and closed non-empty subset $Y \subset X$ meeting every orbit at most once, we construct another equivalence relation using the groupoid construction of \cite[ Example 2.6]{Put:K-theoryGroupoids}, 
which was originally used in the $\mathrm{C}^*$-algebraic setting in \cite{Putnam:MinHomCantor}. 
 Define $\mathcal{R}_Y \subseteq \mathcal{R}_{\varphi}$ to be the subequivalence relation obtained from splitting every orbit that passes through $Y$ into two equivalence classes. Specifically,
 \[ 
 \mathcal{R}_{Y} = \mathcal{R}_{\varphi} \setminus \{ (\varphi^{k}(y), \varphi^{l}(y)) 
 \mid y \in Y, \quad  l < 1 \leq k \text{ or }  k < 1 \leq l \}.
 \]
 
 It is a simple matter to check that this is an open  subequivalence relation of $\mathcal{R}_{\varphi}$ and  hence is also \'{e}tale. If some  $\varphi$-orbit does not meet $Y$, then  it is an equivalence class in both $\mathcal{R}_{\varphi}$ and $\mathcal{R}_{Y}$. If the  orbit does meet $Y$, say at the point $y$, then its $\varphi$-orbit becomes two distinct equivalence classes in $\mathcal{R}_{Y}$, namely $\{ \varphi^{n}(y) \mid n \geq 1 \}$ and   $\{ \varphi^{n}(y) \mid n \leq 0 \}$. In this sense, the orbit is ``broken'' in two at the point $y$.

Since $X$ is compact, for any point $x \in X$ both its forward orbit and backward orbit are dense in $X$. Thus we make the following observation.

\begin{proposition} \label{equDense}
If $(X, \varphi)$ is a minimal system, $X$ is infinite, and $Y$ is a closed non-empty subset of $X$ that meets each orbit at most once, then
$\mathcal{R}_Y$ is minimal. 
\end{proposition}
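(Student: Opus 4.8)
The plan is to verify directly that $\mathcal{R}_Y$ is minimal in the sense defined above, namely that every $\mathcal{R}_Y$-equivalence class is dense in $X$. Recall from the description of $\mathcal{R}_Y$ given just above that its equivalence classes are of three kinds: a full $\varphi$-orbit $\{\varphi^n(x)\mid n\in\mathbb{Z}\}$ whenever the orbit of $x$ misses $Y$; and, whenever an orbit meets $Y$ at a (necessarily unique) point $y$, the forward half-orbit $\{\varphi^n(y)\mid n\geq 1\}$ together with the backward half-orbit $\{\varphi^n(y)\mid n\leq 0\}$. A full $\varphi$-orbit is dense in $X$ since $(X,\varphi)$ is minimal (equivalently, since $\mathcal{R}_\varphi$ is minimal), so the only thing left to check is that the two kinds of half-orbits are dense.

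For this I would first record the elementary fact, alluded to just before the statement, that in a minimal system on an infinite compact metric space both the forward orbit $\{\varphi^n(x)\mid n\geq 1\}$ and the backward orbit $\{\varphi^n(x)\mid n\leq -1\}$ of every point $x$ are dense in $X$. Indeed, the $\omega$-limit set $\omega(x)=\bigcap_{N\geq 1}\overline{\{\varphi^n(x)\mid n\geq N\}}$ is a nested intersection of non-empty closed subsets of the compact space $X$, hence is itself non-empty and closed, and one checks readily that $\varphi(\omega(x))=\omega(x)$. By minimality $\omega(x)=X$, and since $\omega(x)\subseteq\overline{\{\varphi^n(x)\mid n\geq 1\}}$ the forward orbit of $x$ is dense. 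Applying the same argument to the minimal system $(X,\varphi^{-1})$ yields density of backward orbits.

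Combining these observations finishes the proof: an $\mathcal{R}_Y$-class of the second kind equals the forward orbit $\{\varphi^n(y)\mid n\geq 1\}$ and is therefore dense, while an $\mathcal{R}_Y$-class of the third kind, $\{\varphi^n(y)\mid n\leq 0\}$, contains the dense backward orbit $\{\varphi^n(y)\mid n\leq -1\}$ and so is dense as well. Hence every $\mathcal{R}_Y$-equivalence class is dense in $X$, i.e. $\mathcal{R}_Y$ is minimal. I do not expect any genuine obstacle here; the only substantive ingredient is the density of forward and backward half-orbits, which is the standard consequence of compactness together with minimality recalled above, and this is also the point at which the hypothesis that $X$ be infinite enters (in any case that hypothesis is forced, since on a finite minimal system no non-empty closed set can meet each orbit at most once).
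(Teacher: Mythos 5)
Your argument is correct and follows the same route as the paper, which derives the proposition directly from the observation that in a minimal system on a compact space every forward and backward half-orbit is dense; you have merely filled in the standard $\omega$-limit-set justification of that fact, which the paper leaves implicit.
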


Let $\mathcal{R} \subset X \times X$ be an equivalence relation equipped with an \'etale topology, such as $\mathcal{R}_{\varphi}$ or $\mathcal{R}_{Y}$.  Using the method of Renault in  \cite[Chapter II]{MR584266} we  construct the reduced groupoid $\mathrm{C}^*$-algebra $\mathrm{C}^*_r(\mathcal{R})$ as follows. Equip the linear space $C_c(\mathcal{R})$ of compactly supported continuous functions $\mathcal{R} \to \mathbb{C}$ with a product and involution given by
\begin{eqnarray}
 (fg)(x, x') &=& \sum_{\substack{y \in X\\(x,y), (y,x') \in \mathcal{R}}} f(x, y)g(y,x'), \\
(f^*)(x,x') &=& \overline{f(x',x)},
\end{eqnarray}
for $f, g \in C_c(\mathcal{R})$ and $(x,x') \in \mathcal{R}$. This makes $C_c(\mathcal{R})$ into a $^*$-algebra. Let $\ell^2(\mathcal{R})$ denote the Hilbert space of square summable functions on $\mathcal{R}$. Then we define the regular representation $\lambda : C_c(\mathcal{R}) \to \mathcal{B}(\ell^2(\mathcal{R}))$ by
\[ (\lambda(f) \xi)(x, x') =  \sum_{\substack{y \in X\\(x,y), (y,x') \in \mathcal{R}}} f(x, y)\xi(y,x'),\]
for $f \in  C_c(\mathcal{R})$, $\xi \in \ell^2(\mathcal{R})$ and $(x,x') \in \mathcal{R}$. The reduced groupoid $\mathrm{C}^*$-algebra $\mathrm{C}^*_{r}(\mathcal{R})$ is then the closure of $\lambda(C_c(\mathcal{R}))$ with respect to the norm on $\mathcal{B}(\ell^2(\mathcal{R}))$. There is also a full groupoid $\mathrm{C}^*$-algebra, however if the  groupoid is amenable its  full and reduced $\mathrm{C}^{*}$-algebras coincide. In this case, we will simply denote the groupoid $\mathrm{C}^*$-algebra as $\mathrm{C}^*(\mathcal{R})$.  For any dynamical system $(X, \varphi)$, the equivalence relation $\mathcal{R}_{\varphi}$ is always amenable \cite[Example II.3.10]{MR584266}. It follows that if $Y$ is a closed non-empty subset meeting every $\varphi$-orbit at most once, then $\mathcal{R}_Y$ is also amenable, since it is an open subequivalence relation of $\mathcal{R}_{\varphi}$ \cite[Proposition 5.1.1]{MR1799683}.

Let $(X, \varphi)$ be a minimal dynamical system and a non-empty closed subset $Y$ meeting every $\varphi$-orbit at most once. The $\mathrm{C}^*$-algebra associated with $\mathcal{R}_{\varphi}$ is isomorphic to the crossed product $\mathrm{C}^*$-algebra $C(X) \rtimes_{\varphi} \mathbb{Z}$. Since $\mathcal{R}_{Y}$ is an open subequivalence relation of $\mathcal{R}_{\varphi}$, there is an inclusion  $C_c(\mathcal{R}_{Y} ) \subseteq  C_c(\mathcal{R}_{\varphi})$  by simply setting a function $f \in C_c(\mathcal{R}_Y)$ to zero on $\mathcal{R}_{\varphi} \setminus \mathcal{R}_Y$. This extends to a  unital inclusion of $\mathrm{C}^*$-algebras. If $A := \mathrm{C}^*(\mathcal{R}_{\varphi}) \cong C(X) \rtimes_{\varphi} \mathbb{Z}$, we will usually denote $\mathrm{C}^*(\mathcal{R}_Y)$ by $A_Y$.  Observe that both $A$ and $A_Y$ contain copies of $C(X)$, so, taken together with the inclusion $\iota : A_Y \into A$,  we have a commutative diagram
\begin{displaymath} 
\xymatrix{ & C(X) \ar[dl]_{i_1} \ar[dr]^{i_2} &\\
A_{Y} \ar[rr]_{\iota} & & A.}
\end{displaymath}
 
Another description of the $\mathrm{C}^*$-subalgebra $A_{Y}$ often encountered in the literature  is given by the $\mathrm{C}^*$-subalgebra of $A$ generated by subsets $C(X)$ and $u C_0(X \setminus Y)$,
\[
A_Y = C^*(C(X), uC_0(X \setminus Y)) \subseteq C(X) \rtimes_{\varphi} \mathbb{Z}, 
\]
where $C(X) \subset A$ is the standard inclusion
and $u$ is the unitary inducing $\varphi$, that is, the unitary $u \in A$ satisfying $ ufu^* = f \circ \varphi^{-1}$ whenever $ f\in C(X)$.

\section{The classification program} \label{sec:classification}

Let $A$ be a simple, separable, unital, nuclear $\mathrm{C}^*$-algebra. The \emph{Elliott invariant} of $A$, denoted $\Ell(A)$, is the 6-tuple
\[ \Ell(A) := (K_0(A), K_0(A)_+, [1_A], K_1(A), T(A), r_A : T(A) \to S(K_0(A))),\]
where $r_A : T(A) \to S(K_0(A))$ maps a tracial state $\tau$ to the state on the ordered abelian group $(K_0(A), K_0(A)_+,  [1_A])$, defined by $(\tau)([p]-[q]) =  \tau(p) - \tau(q)$, for projections $p, q \in M_{\infty}(A)$. (Here, by abuse of notation, $\tau$ denotes $\tau \otimes \tr_{M_n}$ for appropriate $n \in \mathbb{N}$).

The classification theorem stated below is the culmination of many years of work.

\begin{theorem}[see \cite{CETWW, BBSTWW:2Col, EllGonLinNiu:ClaFinDecRan, GongLinNiue:ZClass,  TWW}]
 \label{ClassThm} Let $A$ and $B$ be separable, unital, simple, infinite-dimensional \mbox{$\mathrm{C}^*$-algebras} with finite nuclear dimension and which satisfy the UCT. Suppose there is an isomorphism 
\[ \gamma : \Ell(A) \to \Ell(B).\]
Then there is a $^*$-isomorphism $\Gamma : A \to B$, which is unique up to approximate unitary equivalence and satisfies $\Ell(\Gamma) = \gamma$.
\end{theorem}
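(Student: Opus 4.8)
Since Theorem~\ref{ClassThm} is the culmination of the cited body of work, I will not prove it here but instead sketch the architecture of that proof and indicate where its weight lies. The plan is to run Elliott's ``classification from existence and uniqueness of morphisms'' scheme, after two standing reductions. First I would replace the finite nuclear dimension hypothesis by $\mathcal{Z}$-stability, $A \cong A \otimes \mathcal{Z}$ and $B \cong B \otimes \mathcal{Z}$: for simple, separable, unital, non-elementary nuclear $\mathrm{C}^*$-algebras these two regularity properties coincide, the forward implication going back to \cite{MR3273581, MR3418247} and the converse to \cite{CETWW}. $\mathcal{Z}$-stability is the form in which everything downstream is run: it yields strict comparison and almost unperforation, so that the Cuntz semigroup of $A$ (resp.\ $B$) is recovered from $\big(K_0(A), K_0(A)_+, [1_A], T(A)\big)$, and it provides the ``room'' for the perturbation arguments below. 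The second reduction is to the range of the invariant: one knows $\Ell(A)$ is isomorphic to the Elliott invariant of a \emph{model} $\mathrm{C}^*$-algebra $C$ --- an inductive limit of subhomogeneous building blocks as in \cite{GongLinNiue:ZClass, EllGonLinNiu:ClaFinDecRan} --- for which the classification is already known, so it suffices to classify $A$ and $B$ each against such a model.

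The existence theorem then produces, for a morphism $\gamma$ at the level of invariants, a $*$-homomorphism $\Phi \colon C \to A$ inducing it. This is exactly where the UCT is used: it realises the prescribed total $K$-theory data by a class in $KK$, which is then lifted --- using nuclearity and $\mathcal{Z}$-stability --- to an honest $*$-homomorphism, whose induced action on $T(A)$ is subsequently corrected so that the pairing $r_A$ is matched. The uniqueness theorem asserts that two full $*$-homomorphisms $C \to A$ that agree on the invariant (equivalently, induce the same class in $KL$ and the same affine map on traces) are approximately unitarily equivalent; this rests on stable uniqueness theorems of Dadarlat--Eilers type together with the complemented-partition-of-unity machinery of \cite{CETWW, BBSTWW:2Col, TWW}, which is the $\mathrm{C}^*$-algebraic analogue of the Rokhlin-type arguments alluded to in the introduction and is what allows fibrewise (over $T(A)$) approximate unitary equivalences to be glued into a global one.

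With existence and uniqueness available, I would finish by Elliott's two-sided approximate intertwining: alternately apply the existence theorem to build $*$-homomorphisms $C \to A$ and $A \to C$ carrying $\gamma$-compatible data, use the uniqueness theorem to make the successive triangles commute up to arbitrarily small unitary perturbations, and pass to the limit to obtain a $*$-isomorphism $\Gamma \colon A \to B$ with $\Ell(\Gamma) = \gamma$; one more application of the uniqueness theorem gives that $\Gamma$ is unique up to approximate unitary equivalence. The main obstacles are the two deepest inputs: the equivalence of finite nuclear dimension with $\mathcal{Z}$-stability, and the uniqueness theorem in full $\mathcal{Z}$-stable generality --- concretely, the complemented-partitions-of-unity technology required to upgrade tracial and local uniqueness to genuine approximate unitary equivalence with no finite-dimensionality assumption on the trace simplex. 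By comparison the existence theorem and the intertwining step are formal once the UCT is in hand.
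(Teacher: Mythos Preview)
The paper does not prove Theorem~\ref{ClassThm}: it is stated with citations to \cite{CETWW, BBSTWW:2Col, EllGonLinNiu:ClaFinDecRan, GongLinNiue:ZClass, TWW} and then used as a black box, so there is no proof in the paper to compare against. Your decision not to prove it is therefore exactly in line with the paper, and your sketch of the architecture (reduction to $\mathcal{Z}$-stability, existence and uniqueness of morphisms via UCT and stable uniqueness, Elliott intertwining) is a reasonable summary of how the cited references assemble the result, going well beyond what the paper itself says.
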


 For a precise definition of the Universal Coefficient Theorem (UCT), see \cite{RosSho:UCT}. It is satisfied for all known nuclear $\mathrm{C}^*$-algebras, in particular for any of the $\mathrm{C}^*$-algebras in the sequel. Nuclear dimension is a type of topological dimension for $\mathrm{C}^*$-algebras which, in the case of a commutative $\mathrm{C}^*$-algebra, coincides with the covering dimension of its spectrum. Nuclear dimension was introduced by Winter and Zacharias in \cite{WinterZac:dimnuc} as a generalization of previous work in \cite{KirWinter:dr} and \cite{Win:cpr}.  Restricting to finite nuclear dimension in the classification theorem given below is thus akin to restricting to finite covering dimension. This avoids any pathologies that can occur. In particular, any simple infinite-dimensional $\mathrm{C}^*$-algebra $A$ with weakly unperforated $K_0$-group will satisfy $\Ell(A) \cong \Ell(A \otimes \mathcal{Z})$, where $\mathcal{Z}$ is the \emph{Jiang--Su algebra} \cite[Theorem 1]{GongJiangSu:Z}. The Jiang--Su algebra was constructed in \cite{JiaSu:Z}. It is a simple, unital, nuclear, infinite-dimensional $\mathrm{C}^*$-algebra whose Elliott invariant is the same as the Elliott invariant of $\mathbb{C}$. We say a $\mathrm{C}^*$-algebra $A$ is \emph{$\mathcal{Z}$-stable} if $A \cong A \otimes \mathcal{Z}$. It is now known that finite nuclear dimension is equivalent to $\mathcal{Z}$-stability for separable, unital, nuclear, simple and infinite-dimensional \mbox{$\mathrm{C}^*$-algebras} \cite{Win:Z-stabNucDim, CETWW, BBSTWW:2Col, MR3273581, MR3418247}.

We  denote the class of all  unital, simple, infinite-dimensional \mbox{$\mathrm{C}^*$-algebras} with finite nuclear dimension and which satisfy the UCT by $\mathcal{C}$, that is,
 \[ \mathcal{C} := \{ A \mid A \text{ a classifiable $\mathrm{C}^*$-algebra}\}.\]
 
 \begin{theorem} \label{classifiableTheorem}
Let $(X, \varphi)$ be a minimal dynamical system with $X$ an infinite compact metric space. Suppose that $A := C(X) \rtimes_{\varphi} \mathbb{Z} \in \mathcal{C}$. If $Y \subset X$ is a non-empty closed subset meeting every orbit at most once, then $A_Y \in \mathcal{C}$. In particular, this is the case if  $(X, \varphi)$ has at most countably many ergodic measures or $\dim(X) < \infty$. 
\end{theorem}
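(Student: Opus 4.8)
The plan is to verify the defining properties of membership in $\mathcal{C}$ for $A_Y$ one at a time: all of them except finite nuclear dimension should follow formally from the construction of $\mathcal{R}_Y$ in Section~\ref{Sect:Prelim} and from the unital inclusion $\iota \colon A_Y \hookrightarrow A$, while finite nuclear dimension will be extracted from $\mathcal{Z}$-stability of $A$ by a large subalgebra argument.

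First I would dispose of the routine properties. Since $X$ is compact metrizable and $\mathbb{Z}$ is countable, $\mathcal{R}_\varphi$ is second countable, hence so is its open subrelation $\mathcal{R}_Y$, so $A_Y = \mathrm{C}^*(\mathcal{R}_Y)$ is separable. It is unital because $\iota$ is unital, and infinite-dimensional because it contains a copy of $C(X)$, which is infinite-dimensional as $X$ is infinite. As recorded in Section~\ref{Sect:Prelim}, $\mathcal{R}_Y$ is amenable; hence $A_Y$ is nuclear, and it satisfies the UCT by Tu's theorem \cite{Tu:Groupoids}. For simplicity I would use that $\mathcal{R}_Y$ is a second countable, Hausdorff, principal (hence effective) étale groupoid which is minimal by Proposition~\ref{equDense}, so that its reduced groupoid $\mathrm{C}^*$-algebra is simple; since $\mathcal{R}_Y$ is amenable, this reduced algebra coincides with $A_Y$. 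Alternatively, the simplicity of orbit-breaking algebras can be obtained from the arguments originating in \cite{Putnam:MinHomCantor}.

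The substantive step is finite nuclear dimension. Since $A \in \mathcal{C}$, the equivalence of finite nuclear dimension with $\mathcal{Z}$-stability for simple, separable, unital, nuclear, infinite-dimensional $\mathrm{C}^*$-algebras (\cite{CETWW}; see also Section~\ref{sec:classification}) shows that $A$ is $\mathcal{Z}$-stable. The key point is then that $A_Y$ is a centrally large subalgebra of $A$ in the sense of Phillips---this largeness being exactly the structural feature of orbit-breaking subalgebras that is exploited in \cite{ArBkPh-Z}---and that centrally large subalgebras of simple, unital, $\mathcal{Z}$-stable $\mathrm{C}^*$-algebras are again $\mathcal{Z}$-stable \cite{ArBkPh-Z}. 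Hence $A_Y$ is $\mathcal{Z}$-stable, and, applying the same equivalence in the reverse direction to $A_Y$ (which we have just shown is simple, separable, unital, nuclear and infinite-dimensional), $A_Y$ has finite nuclear dimension. Thus $A_Y \in \mathcal{C}$. I expect the only genuinely delicate point, beyond bookkeeping, to be confirming that the hypothesis that $Y$ meet every orbit at most once is exactly what is needed for $A_Y$ to be a centrally large subalgebra of $A$.

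For the final assertion it suffices to check that $A = C(X) \rtimes_\varphi \mathbb{Z}$ itself lies in $\mathcal{C}$, since the rest is the implication just proved. All the defining properties hold for $A$ for the same reasons as above (simplicity now from minimality and freeness of $\varphi$), so only $\mathcal{Z}$-stability needs attention. If $\dim X < \infty$, then $(X, \varphi)$ has mean dimension zero---the mean dimension of a homeomorphism is a limit of $\tfrac{1}{n}$ times quantities uniformly bounded by $\dim X$, hence vanishes when $\dim X < \infty$---and so $A$ is $\mathcal{Z}$-stable by Elliott and Niu \cite{EllNiu:MeanDimZero}. If instead $(X, \varphi)$ has at most countably many ergodic measures, $\mathcal{Z}$-stability of $A$ again follows, this time from the $\mathcal{Z}$-stability criteria available for crossed products of minimal systems. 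In either case $A \in \mathcal{C}$, and the preceding argument applies.
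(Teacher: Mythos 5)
Your proposal is correct and follows essentially the same route as the paper: amenability of $\mathcal{R}_Y$ gives nuclearity and the UCT, minimality (Proposition~\ref{equDense}) gives simplicity, and the centrally large subalgebra results of Archey--Buck--Phillips \cite{ArBkPh-Z} transfer $\mathcal{Z}$-stability (equivalently, finite nuclear dimension) from $A$ to $A_Y$, with the last assertion reduced to mean dimension zero via Elliott--Niu. The only difference is cosmetic: the paper cites Toms--Winter and Lin directly for the two sufficient conditions before noting that both reduce to mean dimension zero, whereas you argue through mean dimension zero from the start; for the countably-many-ergodic-measures case the relevant fact (which you leave implicit) is that this condition forces mean dimension zero by Lindenstrauss--Weiss \cite{LinWeiss:MTD}.
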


\begin{proof} Assume that $C(X) \rtimes_{\varphi} \mathbb{Z}$ is classifiable. That $A_Y$ is separable and unital is clear. Since $\mathcal{R}_Y$ is amenable, $A_Y$ is nuclear (see for example \cite[Corollary 6.2.14]{MR1799683}) and satisfies the UCT \cite{Tu:Groupoids}. Moreover, since $Y$ meets every $\varphi$-orbit at most once, Proposition \ref{equDense} implies that $A_Y$ is a simple $\mathrm{C}^*$-algebra. Finally, since $A_Y$ is a centrally large subalgebra of the nuclear $\mathrm{C}^*$-algebra $A$ (see \cite[Section 4]{ArcPhi2016}) it follows that $A_Y$ is also $\mathcal{Z}$-stable \cite[Theorem 2.4]{ArBkPh-Z}. That $C(X) \rtimes_{\varphi} \mathbb{Z}$ is when $(X, \varphi)$ is uniquely ergodic is the main result of Toms and Winter in \cite{TomsWinter:minhom, TomsWinter:PNAS}, while classification in the case for $\dim(X) < \infty$ is due to Lin in \cite{Lin:MinDyn}. Both results also follow from the results of Elliott and Niu in \cite{EllNiu:MeanDimZero}, as both these conditions imply $(X, \varphi)$ has mean dimension zero (see  \cite{MR1742309,  LinWeiss:MTD} for more on mean dimension).
\end{proof}
 
Determining the Elliott invariant for a given $\mathrm{C}^*$-algebra requires computing its $K$-theory. For crossed products by $\mathbb{Z}$, the principal tool for the computing $K$-theory is the Pimsner--Voiculescu exact sequence \cite{Pimsner1980}, which, for $C(X) \rtimes_{\varphi} \mathbb{Z}$, is given by
\begin{displaymath} 
\xymatrix{
K^0(X) \ar[r]^-{\id-\varphi^*} & K^0(X) \ar[r] &  K_0(C(X) \rtimes_{\varphi} \Z)  \ar[d]^{\partial_{\text{PV}}}\\
 K_1(C(X) \rtimes_{\varphi} \Z) \ar[u]^{\partial_{\text{PV}}} &  K^1(X) \ar[l] & K^1(X) \ar[l]_-{\id-\varphi^*}.}
\end{displaymath}

For orbit-breaking subalgebras $\mathrm{C}^*(\mathcal{R}_Y)$, we use the following exact sequence which relates the $K$-theory of $\mathrm{C}^*(\mathcal{R}_Y)$ to the $K$-theory of  $C(Y)$ and $C(X) \times_{\varphi} \mathbb{Z}$.

\begin{theorem}[Theorem 2.4 and Example 2.6 of \cite{Put:K-theoryGroupoids}] \label{PutExtSeq} 
Let $(X, \varphi)$ be a minimal dynamical system and $Y \subseteq X$ a closed non-empty subset of $X$ which meets 
every orbit at most once. Let $ A:= C(X) \rtimes_{\varphi} \mathbb{Z}$ and let $\iota : A_Y \into A$ be the inclusion map. Then there exists a six-term exact sequence
\begin{displaymath} 
\xymatrix{ K^0(Y) \ar[r] & K_0(A_Y) \ar[r]^-{\iota_*} & K_0(C(X) \rtimes_{\varphi} \mathbb{Z}) \ar[d]^{\partial_{\text{OB}}}\\
K_1(C(X) \rtimes_{\varphi} \mathbb{Z} ) \ar[u]^{\partial_{\text{OB}}} & K_1(A_Y) \ar[l]_-{\iota_*} & K^1(Y). \ar[l]}
\end{displaymath}
\end{theorem}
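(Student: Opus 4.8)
The plan is to realise $\iota\colon A_Y\hookrightarrow A$ as an edge of an extension of $\mathrm{C}^*$-algebras and to read the sequence off the associated six-term exact sequence. The conceptual skeleton is as follows: if $C_\iota$ denotes the mapping cone of $\iota$, then the extension $0\to SA\to C_\iota\to A_Y\to 0$ has a $K$-theory six-term sequence of exactly the asserted shape, with the third term replaced by $K_\bullet(C_\iota)$, with the boundary maps $K_i(A_Y)\to K_{i+1}(SA)\cong K_i(A)$ equal up to sign to $\iota_*$, and with the maps $K_i(A)\cong K_{i+1}(SA)\to K_{i+1}(C_\iota)$ induced by the inclusion $SA\hookrightarrow C_\iota$ — all standard properties of mapping cones. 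Thus the theorem is equivalent to the assertion that $C_\iota$ is $KK$-equivalent to $C(Y)$, i.e.\ $K_0(C_\iota)\cong K^0(Y)$ and $K_1(C_\iota)\cong K^1(Y)$, compatibly with these maps, which then pins down $\partial_{\text{OB}}$ as the map induced by $SA\hookrightarrow C_\iota$. As a sanity check, when $X$ is a Cantor set and $Y$ a point this returns Putnam's original computation: $A_Y$ is an AF algebra, $K_1(A_Y)=0$, $K_0(A_Y)\cong K_0(A)$, and the only change from $A$ to $A_Y$ is the annihilation of the class of the canonical unitary.

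To establish the $KK$-statement I would work at the groupoid level, following \cite{Put:K-theoryGroupoids}. First I would record the structure of $A_Y$ inside $A$: with $v$ the partial isometry obtained as the limit of $u h_\lambda$ for an approximate unit $(h_\lambda)$ of the ideal $C_0(X\setminus Y)\trianglelefteq C(X)$, one has $v^*v=1_{X\setminus Y}$, $vv^*=1_{X\setminus\varphi(Y)}$, $vfv^*=(f\circ\varphi^{-1})vv^*$, and $A_Y=\mathrm{C}^*(C(X),v)$; equivalently $\mathcal R_Y\subset\mathcal R_\varphi$ is the open subgroupoid obtained by deleting the bisection $\{(y,\varphi(y))\mid y\in Y\}$ and its products. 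Next I would build a ``Toeplitz'' groupoid $\mathcal G$ enlarging this picture — its unit space being $X$ together with a copy of $Y\times\mathbb{N}_0$ that serves as a defect — engineered so that there is an extension $0\to C(Y)\otimes\mathcal K\to\mathrm{C}^*(\mathcal G)\to A\to 0$ in which $A_Y$ sits as a full corner (hence $K_\bullet(\mathrm{C}^*(\mathcal G))\cong K_\bullet(A_Y)$). The six-term exact sequence of this extension is then exactly the assertion, and comparing it against the mapping-cone sequence of the previous paragraph identifies $\partial_{\text{OB}}$. The feature that forces the ideal to be $C(Y)\otimes\mathcal K$ is the easily verified fact that $\mathcal R_\varphi\setminus\mathcal R_Y$, as a space, is fibred over $Y$ by a copy of the pair groupoid $\mathbb{N}_0\times\mathbb{N}_0$ — the ``forward'' and ``backward'' broken half-orbits — whose $\mathrm{C}^*$-algebra is $\mathcal K$.

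The main obstacle is precisely the construction and verification of this Toeplitz groupoid (equivalently, the direct identification of the mapping-cone fibre with $C(Y)$): one must produce $\mathcal G$, check it is étale and amenable, check that the indicated subset is an open invariant piece yielding the ideal $C(Y)\otimes\mathcal K$, that the quotient is really $A=\mathrm{C}^*(\mathcal R_\varphi)$, that $A_Y$ embeds as a full corner of $\mathrm{C}^*(\mathcal G)$, and — the most delicate point — that under all these identifications the middle arrow of the resulting sequence is genuinely $\iota_*$ rather than $\iota_*$ composed with an automorphism of $K_\bullet(A)$, and the connecting map is the stated $\partial_{\text{OB}}$. Matching these maps requires choosing the groupoid, the corner embedding, and the identification of the defect with $C(Y)$ compatibly with one another, and this is the technical heart of the argument; everything else then reduces to a diagram chase (in the mapping-cone formulation, simply the standard identification of the cone's boundary map with $\iota_*$ once $K_\bullet(C_\iota)\cong K^\bullet(Y)$ is in hand).
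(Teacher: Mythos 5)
Your reduction of the theorem to the statement that the mapping cone $C_\iota$ of $\iota\colon A_Y\hookrightarrow A$ has the $K$-theory of $C(Y)$ (compatibly with the standard cone maps) is correct, and it is in fact equivalent to the formulation used in the source: Putnam's relative groups $K_*(A_Y;A)$ are exactly the $K$-groups of $C_\iota$, and the sequence in the theorem is the relative six-term sequence for the pair $A_Y\subset A$. But the proposal stops precisely where the content of the theorem begins. You propose to establish $K_*(C_\iota)\cong K^*(Y)$ by constructing a Toeplitz groupoid $\mathcal G$ with an extension $0\to C(Y)\otimes\mathcal K\to\mathrm{C}^*(\mathcal G)\to A\to 0$ in which $A_Y$ is a full corner, and you yourself flag this construction, its amenability/\'etaleness, the fullness of the corner, and the identification of the middle arrow with $\iota_*$ as ``the main obstacle'' and ``the technical heart.'' None of this is carried out, and it is not clear that such a $\mathcal G$ exists with all the listed properties; so as written this is a plan for a proof, not a proof. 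A smaller but genuine error: $\mathcal R_\varphi\setminus\mathcal R_Y$ is not a subgroupoid and its fibre over $y\in Y$ is the off-diagonal set $(\mathbb Z_{\ge1}\times\mathbb Z_{\le0})\cup(\mathbb Z_{\le0}\times\mathbb Z_{\ge1})$, not a pair groupoid $\mathbb N_0\times\mathbb N_0$; the relevant elementary object is the pair $\mathcal S_Y\subset\mathcal S_\varphi$ living over $Y\times\mathbb Z$, not the complement of $\mathcal R_Y$ in $\mathcal R_\varphi$.

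The actual argument (due to Putnam, and spelled out in Section~\ref{TechResultSection}) fills your gap differently: instead of building an extension with quotient $A$, one invokes the excision theorem of \cite{Put:Excision}. The trivial system on $Y\times\mathbb Z$ gives the elementary pair $C_Y=C_Y^+\oplus C_Y^-\subset C$ with $C\cong C(Y)\otimes\mathcal K(\ell^2(\mathbb Z))$ and $C_Y^\pm\cong C(Y)\otimes\mathcal K(\ell^2(\mathbb Z_{>0}))$, resp.\ $C(Y)\otimes\mathcal K(\ell^2(\mathbb Z_{\le 0}))$; the equivariant map $j(y,n)=\varphi^n(y)$ makes $A$ act as multipliers on $C$ and $A_Y$ on $C_Y$, and excision yields $K_*(A_Y;A)\cong K_*(C_Y;C)$. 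The latter is then computed directly from the short exact sequence $0\to K_0(C_Y;C)\to K_0(C_Y^+)\oplus K_0(C_Y^-)\to K_0(C)\to 0$, giving $K^*(Y)$, and the explicit partial-isometry picture of relative classes is what pins down $\partial_{\text{OB}}$ (and is used again in Lemma~\ref{boundaryMapLemma}). If you want to complete your version, the shortest route is to replace the hypothetical Toeplitz groupoid by this excision step; otherwise you must actually construct $\mathcal G$ and verify each of the properties you listed.
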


The details for the remaining maps in this six-term exact sequence are given in \cite{Put:K-theoryGroupoids} and also discussed further in Section~\ref{TechResultSection}.

In addition to the six-term exact sequence which allows us to relate the \mbox{$K$-theory} of an orbit-breaking subalgebra $A_Y$ to the containing crossed product $A = C(X) \rtimes_{\varphi} \mathbb{Z}$, we can also compare their tracial state spaces. In fact, by \cite[Theorem 6.2, Theorem 7.10]{Phi:LargeSubalgebras}, the restriction map 
\[ T(A) \to T(A_Y), \quad \tau \mapsto \tau|_{A_Y} \]
is bijective.

\section{Projectionless crossed products by minimal homeomorphisms} \label{Sect:Skew}

In this section, we consider projectionless crossed products arising from the minimal skew product systems constructed in \cite{DPS:MinHomKtheory}. These constructions are based on work of Glasner and Weiss in \cite{GlaWei:MinSkePro} and Fathi and Herman in \cite{FatHer:Diffeo}. 

To begin, let us consider the possible abstract $K$-theory groups of crossed products $C(X) \rtimes_{\varphi} \mathbb{Z}$ obtained from a (minimal) homeomorphism $\varphi$ when the underlying space $X$ has finitely generated topological $K$-theory. 

\begin{theorem} \label{thm:Kpossibiliities}
Suppose that $X$ is a compact metric space with finitely generated $K$-theory and $\phi$ is a homeomorphism of $X$. Then there exists $d\ge 1$ and finite abelian groups $F_0$ and $F_1$ such that 
\[
K_0(C(X) \rtimes_{\phi} \Z) \cong \Z^d \oplus F_0 \hbox{ and }K_1(C(X) \rtimes_{\phi} \Z)\cong \Z^d\oplus F_1.
\]
\end{theorem}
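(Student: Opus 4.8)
The plan is to read off the group structure of $K_*(C(X)\rtimes_\phi\Z)$ from the Pimsner--Voiculescu six-term exact sequence recalled above, together with standard facts about finitely generated abelian groups. Abbreviating $\alpha_j := \id - \phi^* \colon K^j(X) \to K^j(X)$ for $j = 0,1$, exactness of the PV sequence yields the two short exact sequences
\[
0 \to \coker\alpha_0 \to K_0(C(X)\rtimes_\phi\Z) \to \ker\alpha_1 \to 0,
\qquad
0 \to \coker\alpha_1 \to K_1(C(X)\rtimes_\phi\Z) \to \ker\alpha_0 \to 0.
\]
Since $K^0(X)$ and $K^1(X)$ are finitely generated, each $\ker\alpha_j$ and each $\coker\alpha_j$ is finitely generated, and therefore so is each $K_j(C(X)\rtimes_\phi\Z)$, being an extension of one finitely generated abelian group by another. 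Hence, by the structure theorem, $K_j(C(X)\rtimes_\phi\Z) \cong \Z^{d_j}\oplus F_j$ for some $d_j \ge 0$ and finite group $F_j$, and it remains to show $d_0 = d_1$ and that this common value is at least $1$.

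To pin down the $d_j$ I would compute ranks, using that $-\otimes_\Z\Q$ is exact and that $\rank(\cdot) = \dim_\Q(\cdot\otimes_\Z\Q)$ is therefore additive along short exact sequences of finitely generated abelian groups. Applying this to $0 \to \ker\alpha_j \to K^j(X) \xrightarrow{\alpha_j} K^j(X) \to \coker\alpha_j \to 0$ gives $\rank(\ker\alpha_j) = \rank(\coker\alpha_j) =: e_j$, and applying it to the two short exact sequences above gives
\[
d_0 = \rank K_0(C(X)\rtimes_\phi\Z) = e_0 + e_1 = \rank K_1(C(X)\rtimes_\phi\Z) = d_1.
\]
Setting $d := e_0 + e_1$, we obtain $K_j(C(X)\rtimes_\phi\Z) \cong \Z^d \oplus F_j$ for $j = 0,1$ with the same $d$.

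Finally, to see $d \ge 1$ it suffices to show $e_0 \ge 1$, i.e.\ that $\ker\alpha_0 \subseteq K^0(X)$ contains an element of infinite order. The class $[1_X] \in K^0(X) = K_0(C(X))$ of the trivial line bundle does: for any point $x_0 \in X$, the restriction homomorphism $K^0(X) \to K^0(\{x_0\}) \cong \Z$ sends $[1_X]$ to $1$, so $[1_X]$ has infinite order, while $\phi^*[1_X] = [1_X]$ since the pullback of a trivial bundle is trivial, whence $[1_X] \in \ker\alpha_0$. Thus $e_0 \ge 1$ and $d = e_0 + e_1 \ge 1$.

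The argument is almost entirely formal homological algebra; the only genuinely geometric input is the last paragraph (the unit of $K^0(X)$ being $\phi^*$-fixed of infinite order), and the one point deserving a little care is the symmetric rank bookkeeping that forces the \emph{same} integer $d = e_0 + e_1$ to appear for both $K_0$ and $K_1$. I do not anticipate a serious obstacle.
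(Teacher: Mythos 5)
Your proof is correct and follows essentially the same route as the paper: the Pimsner--Voiculescu sequence yields finite generation, and the rank bookkeeping via $e_j=\rank\ker(\id-\phi^*)=\rank\coker(\id-\phi^*)$ on $K^j(X)$ is exactly the rank--nullity argument the paper invokes to force the same free rank $d=e_0+e_1$ in both degrees. The only cosmetic difference is the proof that $d\ge 1$: the paper cites the fact that the class of the canonical unitary generates a copy of $\Z$ in $K_1$, while you exhibit the $\phi^*$-fixed infinite-order class $[1_X]\in\ker(\id-\phi^*)\subseteq K^0(X)$; since $[1_X]$ is (up to sign) the image of $[u]$ under the PV boundary map, these are two views of the same observation, and yours is arguably the more self-contained.
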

\begin{proof}
The Pimsner--Voiculescu exact sequence and the fact that $K^j(X)$ is finitely generated imply that $K_j(C(X) \rtimes_{\psi} \Z)$ is also finitely generated, $j=0,1$. That the free part of the degree zero group is the same as the free part of the degree one group follows from the rank--nullity theorem. Finally, $d\ge 1$ because the class of the unitary giving the actions generates a copy of $\Z$ in $K_1(C(X) \rtimes_{\psi} \Z)$.
\end{proof}

We will show that for every such pair of groups given by Theorem~\ref{thm:Kpossibiliities}, there exists a minimal dynamical system $(X, \varphi)$ such that the $K$-theory $C(X) \rtimes_{\varphi} \mathbb{Z}$ is precisely these two groups. To do so, we use results from \cite{DPS:MinHomKtheory}. Here $\tilde{K}^0(X)$ denotes the reduced $K_0$-group of $X$.

\begin{theorem}\cite[Theorem 5.9]{DPS:MinHomKtheory} \label{Thm:MainDynamic} Let $G_0$ and $G_1$ be finitely generated abelian groups and  $\sigma_0: G_0 \rightarrow G_0$, $\sigma_1: G_1 \rightarrow G_1$ finite order automorphisms.  Suppose there exists a pointed connected finite CW-complex $W$ such that 
\[\tilde{K}^0(W)\cong G_0, \text{ and } K^1(W)\cong G_1,\]
and basepoint-preserving finite order homeomorphism 
\[ \beta_W: W \rightarrow W \text{ such that } \beta_W^*=\sigma_*. \]
Then there exists a a compact metric space $X$ such that $K^0(X)\cong \Z \oplus G_0$, $K^1(X) \cong G_1$, and a minimal homeomorphism $\varphi : X \to X$ such that  $\varphi^* = \sigma_*$. Moreover, we can arrange that  $H^1(X) = \{0\}$ and that $\varphi$ is uniquely ergodic. \end{theorem}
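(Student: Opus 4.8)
The plan is to realize $X$ as a skew product over a point-like minimal base with fibre $W$, the fibrewise dynamics being a generic small perturbation of $\beta_W$, and then to read off the invariants via the Künneth theorem. As a preliminary reduction I would first arrange that $H^1(W) = \{0\}$: since the free part of $H^1(W;\Z)$ injects into that of $K^1(W) \cong G_1$ and carries a restriction of $\sigma_1$, one should be able to replace $(W, \beta_W)$ by a pointed connected finite CW-complex realizing the same $K$-theoretic and equivariant data but with all of $K^1$ carried away from degree one (informally, trading one-spheres for three-spheres and letting the finite order map permute and act on them as $\sigma_*$ dictates). I would isolate this as a separate realization lemma and assume $H^1(W) = \{0\}$ henceforth; alternatively, $H^1(X)$ could be killed at the very end by a telescoping inverse limit.

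Next I would invoke \cite{DPS:DynZ} to produce an infinite compact metric space $Z$ with the \v{C}ech cohomology of a point --- so $Z$ is connected, $K^0(Z) \cong \Z$, $K^1(Z) = \{0\}$ and $H^1(Z) = \{0\}$ --- carrying a minimal, uniquely ergodic homeomorphism $\zeta \colon Z \to Z$. Since such point-like spaces arise as inverse limits of contractible complexes, homotopy classes of continuous maps $Z \to \Homeo(W)$ are detected by $\pi_0(\Homeo(W))$, so it is meaningful to ask that a cocycle $c \colon Z \to \Homeo(W)$ be homotopic to the constant map $\beta_W$. Set $X := Z \times W$ and, for such a $c$, let $\varphi_c(z,w) := (\zeta(z),\, c(z)(w))$. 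Working in the Baire space of cocycles of the form $c = \beta_W \circ g$ with $g \colon Z \to \Homeo_0(W)$ continuous, a Glasner--Weiss / Fathi--Herman genericity argument \cite{GlaWei:MinSkePro, FatHer:Diffeo}, suitably adapted, should show that for a generic $c$ the skew product $(X, \varphi_c)$ is simultaneously minimal and uniquely ergodic. I fix such a $c$, write $\varphi := \varphi_c$, and record that, as a self-map of the space $Z \times W$, $\varphi$ is homotopic to $\zeta \times \beta_W$.

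It then remains to extract the invariants. As $X = Z \times W$ as a topological space, the Künneth theorem for $K$-theory --- with no Tor term, $K^*(Z)$ being free --- gives $K^0(X) \cong K^0(W) \cong \Z \oplus \tilde K^0(W) \cong \Z \oplus G_0$ and $K^1(X) \cong K^1(W) \cong G_1$; moreover, from $\varphi \simeq \zeta \times \beta_W$ and $\zeta^* = \id$ on $K^*(Z)$, the induced map $\varphi^*$ is $\id \otimes \beta_W^*$, i.e.\ precisely $\sigma_*$ under these identifications. The Künneth theorem for cohomology together with the preliminary reduction gives $H^1(X) \cong H^1(Z) \oplus H^1(W) = \{0\}$, and unique ergodicity of $\varphi$ is part of the choice of $c$.

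The step I expect to be the main obstacle is the simultaneous control of dynamics and topology: the cocycle $c$ must be generic enough for $(X, \varphi_c)$ to be minimal and uniquely ergodic, yet confined to the homotopy class of the constant cocycle $\beta_W$ so that the action on $K$-theory is unchanged --- reconciling the Glasner--Weiss perturbation machinery with the rigidity of the point-like base of \cite{DPS:DynZ}, and verifying that $\Homeo_0(W)$ is large enough for genericity to force minimality when $W$ is merely a finite CW-complex, is the analytic heart of the argument. A secondary, bookkeeping-type difficulty is the equivariant realization used to arrange $H^1(W) = \{0\}$ without disturbing $\sigma_*$.
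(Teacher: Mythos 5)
The paper does not actually prove this statement: it is imported verbatim from the companion paper \cite[Theorem 5.9]{DPS:MinHomKtheory}, so there is no in-text argument to compare against. Judged on its own terms, however, your proposal has a genuine gap at its core. You set $X := Z \times W$ and seek a fibrewise cocycle $c : Z \to \Homeo(W)$ making $\varphi_c(z,w) = (\zeta(z), c(z)(w))$ minimal. This is impossible for essentially every $W$ to which the theorem is applied. If $F \subsetneq W$ is any non-empty proper closed subset invariant under \emph{every} homeomorphism of $W$ --- for instance the set of points at which $W$ fails to be locally Euclidean of top dimension --- then $Z \times F$ is a non-empty proper closed $\varphi_c$-invariant subset for \emph{every} cocycle $c$, so no skew product on $Z \times W$ is minimal. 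The complexes actually needed here (Lemmas~\ref{cyclic} and the following lemma) are wedges of spheres, reduced mapping cones and suspensions, all of which have such distinguished points (the wedge point, the cone point, the singular locus), so the obstruction is not a corner case: it rules out the product model exactly where the theorem is used. No amount of genericity inside a homotopy class of cocycles can overcome this, and the difficulty you flag as the ``analytic heart'' (whether $\Homeo_0(W)$ is large enough) is in fact a hard no rather than a technical challenge. A further symptom that $X$ cannot be $Z \times W$ up to homotopy: K\"unneth would force $H^1(X) \cong H^1(W)$, whereas the theorem asserts $H^1(X) = \{0\}$ with no hypothesis on $H^1(W)$; your proposed equivariant repair of $W$ addresses this symptom but not the minimality obstruction.

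Two secondary points. First, even for $W$ a closed manifold (where transitivity holds), the Glasner--Weiss and Fathi--Herman genericity theorems you invoke are proved for cocycles with values in compact (Lie) groups acting transitively, not in $\Homeo_0(W)$; unique ergodicity of the skew product is especially delicate outside that setting, since one needs control of the fibre measures. Second, your K\"unneth/continuity computation of $K^*(Z \times W)$ and of $\varphi^*$ \emph{would} be correct if the product model were available; the failure is entirely dynamical, not $K$-theoretic. The correct route (per \cite{DPS:MinHomKtheory}) still combines the point-like systems of \cite{DPS:DynZ} with Glasner--Weiss-type skew products, but $W$ and its finite-order homeomorphism must enter the construction in a way that does not require $\Homeo(W)$ to move points of $W$ around a fibre; as written, your construction cannot produce a minimal system.
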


 \begin{lemma} \label{cyclic}
Given $n\in \N$ there exists a connected finite CW-complex $W$ with vanishing $K^1$-group and $\beta: W \rightarrow W$ a finite order homeomorphism such that the induced map on $\tilde{K}^0(W)$ satisfies
\[
\ker(\id-\beta^*)\cong \{ 0 \} \hbox{ and }\coker(\id-\beta^*)\cong \Z/n\Z.
\]
\end{lemma}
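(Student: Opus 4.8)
The plan is to realise the required automorphism by a cyclic permutation action, but on a space built from a wedge of spheres by a mapping-cone construction that removes the unavoidable fixed vector of a permutation representation. Write $\Gamma = \Z/n\Z$ with generator $t$, put $Z := \bigvee_{g \in \Gamma} S^2$ (a wedge of $n$ copies of $S^2$), and let $\Gamma$ act on $Z$ by cyclically permuting the wedge summands: a cellular action by homeomorphisms fixing the wedge point. The fold map $\nabla \colon Z \to S^2$ is $\Gamma$-equivariant for the trivial action on $S^2$. Let $C$ be the mapping cone of $\nabla$ and set $W := \Sigma C$; these are connected finite CW-complexes carrying a $\Gamma$-action (trivial on the suspension coordinate and fixing the basepoint), and I take $\beta \colon W \to W$ to be the homeomorphism implementing $t$. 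It has order $n$, in particular finite order.

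The first computation is $\tilde{K}^*(C)$. From the cofiber sequence $Z \xrightarrow{\nabla} S^2 \to C$ and the vanishing $\tilde{K}^1(S^2) = \tilde{K}^1(Z) = 0$, the six-term sequence reduces to
\[
0 \longrightarrow \tilde{K}^0(C) \longrightarrow \tilde{K}^0(S^2) \xrightarrow{\ \nabla^*\ } \tilde{K}^0(Z) \longrightarrow \tilde{K}^1(C) \longrightarrow 0.
\]
Identifying $\tilde{K}^0(S^2) \cong \Z$ via the Bott class and $\tilde{K}^0(Z) \cong \Z[\Gamma]$ via the Bott class on each summand (so that $\Gamma$ acts by translation), the map $\nabla^*$ sends the generator to the norm element $N := \sum_{g \in \Gamma} g$. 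Since $N$ is a primitive vector, $\nabla^*$ is a split injection of abelian groups, whence $\tilde{K}^0(C) = 0$ and $\tilde{K}^1(C) \cong \Z[\Gamma]/\Z N \cong \Z^{n-1}$, and all these identifications are $\Gamma$-equivariant.

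Next I would transport this across the suspension isomorphism: $\tilde{K}^0(W) \cong \tilde{K}^1(C) \cong \Z[\Gamma]/\Z N$ and $K^1(W) \cong \tilde{K}^0(C) = 0$, giving the vanishing $K^1$-group. As the suspension isomorphism is $\Gamma$-equivariant, $\beta^*$ acts on $\tilde{K}^0(W)$ as multiplication by $t$ on $\Z[\Gamma]/\Z N$. Because $N$ is $\Gamma$-invariant, the \emph{subgroup} $\Z N$ coincides with the \emph{ideal} $(N)$, and since $t^n - 1 = (t-1)(1 + t + \cdots + t^{n-1})$ there is a ring isomorphism $\Z[\Gamma]/\Z N \cong \Z[t]/(1 + t + \cdots + t^{n-1})$ carrying $\beta^*$ to multiplication by $t$. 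Hence $\id - \beta^*$ is multiplication by $1-t$, so $\coker(\id - \beta^*) \cong \Z[t]/(1 + t + \cdots + t^{n-1},\, t - 1) \cong \Z/n\Z$ by evaluation at $t = 1$; and since $\tilde{K}^0(W)$ is free of finite rank while this cokernel is finite, $\id - \beta^*$ is injective, i.e.\ $\ker(\id - \beta^*) = 0$. (For $n = 1$ everything degenerates, $W$ is contractible, and the statement is vacuous.)

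I expect the obstacle to be conceptual rather than computational: any naive attempt to realise a finite-order automorphism of $\tilde{K}^0$ by permuting wedge summands is doomed because the class $N = \sum_g g$ is always fixed, forcing $\ker(\id - \beta^*) \neq 0$, and the purpose of the mapping-cone-then-suspend step is precisely to quotient out this fixed class while keeping $K^1$ trivial. The technical points that will need care are the identification of $\nabla^*$ with the norm-element map $1 \mapsto N$ together with the orientation bookkeeping showing $\Gamma$ acts by translation on $\tilde{K}^0(Z)$ (up to the harmless ambiguity $t \leftrightarrow t^{-1}$), the $\Gamma$-equivariance of the suspension isomorphism, and the identity $\Z N = (N)$ that turns the group quotient $\Z[\Gamma]/\Z N$ into the cyclotomic-type ring $\Z[t]/(1 + t + \cdots + t^{n-1})$ and makes the final cokernel computation immediate.
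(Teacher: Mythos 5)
Your proof is correct and takes essentially the same route as the paper: there $W$ is the reduced mapping cone of the fold map from a wedge of $n$ circles to $S^1$ with the cyclic permutation action, so your space is (up to homotopy and a harmless double suspension) the same one, and your group-ring computation of $\coker(1-t)$ on $\Z[t]/(1+t+\cdots+t^{n-1})$ is exactly the paper's Smith-normal-form computation of $I-B$ for the companion matrix $B$, in different notation. The key idea --- coning off the fold map to kill the $\Gamma$-fixed norm vector so that $\id-\beta^*$ becomes injective with cokernel $\Z/n\Z$ --- is identical.
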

\begin{proof}
Let $W_0 = S^1 \vee S^1 \vee \cdots \vee S^1$ be the wedge of $n-1$ copies of $S^1$ with basepoint the wedge point, and denote by $(x, i) \in S^1 \times \{1, \dots, n\}$ the point $x$ in the $i^{\text{th}}$ copy of $S^1$. Define $f : W_0 \to S^1$ to be the map $(x, i) \mapsto x$. The connected CW-complex $W$ is given by $C_f$, the reduced mapping cone of the map $f$. Observe that $C_f$ is a pointed space such that $\tilde{K}^0(W) \cong \mathbb{Z}^{n-1}$ and $\tilde{K}^1(W) = 0$ and there exists a basepoint preserving homeomorphism $\beta : W \to W$ which induces the map on $K$-theory given by multiplication of an element in $\mathbb{Z}^{n-1}$ by the matrix 
\[
B=\left[ \begin{array}{cccccc}0 & 0 & 0 & \cdots & 0 & -1 \\ 1 & 0 & 0 & \cdots & 0 & -1 \\ 0 & 1 & 0 & \cdots & 0 & -1 \\ 0 & 0 & 1 & \cdots & 0 & -1 \\ \vdots & \vdots & \vdots & \ddots & \vdots & \vdots \\ 0 & 0 & 0 & \cdots & 1 & -1 \end{array}\right].
\]
For further details, see  \cite[Example 5.6, Lemma 5.7]{DPS:MinHomKtheory}. It follows that the map $\id-\beta^*$ on $\tilde{K}^0(W)$ is given by
\[
\left[ \begin{array}{cccccc}1 & 0 & 0 & \cdots & 0 & 1 \\ -1 & 1 & 0 & \cdots & 0 & 1 \\ 0 & -1 & 1 & \cdots & 0 & 1 \\ \vdots & \vdots & \vdots & \ddots & \vdots & \vdots   \\ 0 & 0 & 0 & \cdots & 1 & 1 \\ 0 & 0 & 0 & \cdots & -1 & 2 \end{array}\right],
\]
which has Smith normal form given by
\[
\left[ \begin{array}{cccccc}1 & 0 & 0 & \cdots & 0 & 0 \\ 0 & 1 & 0 & \cdots & 0 & 0 \\ 0 & 0 & 1 & \cdots & 0 & 0 \\ \vdots & \vdots & \vdots & \ddots & \vdots & \vdots   \\ 0 & 0 & 0 & \cdots & 1 & 0 \\ 0 & 0 & 0 & \cdots & 0 & n \end{array}\right].
\]
Thus $\ker(\id - \beta^*) \cong \ker(I - B) = \{0\}$ and $\coker(\id - \beta^*) \cong \coker(I - B) \cong \mathbb{Z}/n\mathbb{Z}$, so $W$ and $\beta$ satisfy the requirements of the lemma.
\end{proof}

\begin{lemma}
Suppose $d\in \N \setminus \{0 \}$, and $F_0$, $F_1$ are finite abelian groups. Then there exist a connected finite CW-complex, $W$ and $\beta: W\rightarrow W$ a finite order homeomorphism such that
the induced map on $K^0(W)$ satisfies
\[
\ker(\id-\beta^*)\cong \Z^d \hbox{ and }\coker(\id-\beta^*)\cong  \Z^d\oplus F_0,
\]
and the induced map on $K^1(W)$ satisfies
\[
\ker(\id-\beta^*)\cong \{ 0 \} \hbox{ and }\coker(\id-\beta^*)\cong F_1.
\]
\end{lemma}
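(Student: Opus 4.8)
The plan is to assemble $W$ as a one-point union (wedge) of three kinds of pieces — a wedge of $2$-spheres carrying the identity homeomorphism (to produce the free rank), copies of the complexes furnished by Lemma~\ref{cyclic} (to produce the torsion $F_0$ in degree zero), and suspensions of such complexes (to produce the torsion $F_1$ in degree one) — and then read off the answer from additivity of reduced $K$-theory over wedges together with the suspension isomorphism.

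First I would fix decompositions $F_0 \cong \bigoplus_{i=1}^{k} \Z/n_i\Z$ and $F_1 \cong \bigoplus_{j=1}^{l} \Z/m_j\Z$ with all $n_i,m_j \ge 2$ (discarding trivial summands). For each $i$, Lemma~\ref{cyclic} supplies a connected finite CW-complex $W_i$ with $K^1(W_i)=0$ and a finite order basepoint-preserving homeomorphism $\beta_i$ such that on $\tilde K^0(W_i)$ one has $\ker(\id-\beta_i^*)\cong\{0\}$ and $\coker(\id-\beta_i^*)\cong\Z/n_i\Z$. For each $j$, apply Lemma~\ref{cyclic} to $m_j$ to get $(W_j',\beta_j')$ and pass to the (reduced) suspension $(\Sigma W_j',\Sigma\beta_j')$; since $\Sigma\beta_j'$ is again a basepoint-preserving homeomorphism of the same finite order, and since the suspension isomorphism gives $\tilde K^0(\Sigma W_j')\cong\tilde K^1(W_j')=0$ and $\tilde K^1(\Sigma W_j')\cong\tilde K^0(W_j')$ naturally in the induced map, on $\tilde K^1(\Sigma W_j')$ we get $\ker(\id-(\Sigma\beta_j')^*)\cong\{0\}$ and $\coker(\id-(\Sigma\beta_j')^*)\cong\Z/m_j\Z$. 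Finally set $W$ to be the wedge of $d-1$ copies of $S^2$, the $W_i$, and the $\Sigma W_j'$, and let $\beta$ be the wedge of the identity on the spheres with the $\beta_i$ and the $\Sigma\beta_j'$; then $W$ is a connected finite CW-complex and $\beta$ is a finite order homeomorphism, its order being the least common multiple of the orders of the pieces.

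Next I would compute. Because reduced $K$-theory carries wedges to direct sums and a wedge of pointed maps induces the direct sum of the induced maps, $\tilde K^0(W)\cong\Z^{d-1}\oplus\bigoplus_i\tilde K^0(W_i)$ with $\id-\beta^*$ acting as $0$ on the sphere summands and as $\id-\beta_i^*$ on the $W_i$-summands, while $\tilde K^1(W)\cong\bigoplus_j\tilde K^1(\Sigma W_j')$ with $\id-\beta^*$ acting as $\id-(\Sigma\beta_j')^*$. Hence on $\tilde K^0(W)$ we obtain $\ker(\id-\beta^*)\cong\Z^{d-1}$ and $\coker(\id-\beta^*)\cong\Z^{d-1}\oplus F_0$, and on $K^1(W)=\tilde K^1(W)$ we obtain $\ker(\id-\beta^*)\cong\{0\}$ and $\coker(\id-\beta^*)\cong F_1$. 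To pass from $\tilde K^0$ to the unreduced $K^0$, use that $W$ is connected, so $K^0(W)\cong\Z\oplus\tilde K^0(W)$ with $\beta^*$ fixing the rank-one summand (pullbacks of trivial bundles are trivial, $\beta$ is basepoint-preserving, and rank is preserved by pullback along a homeomorphism); thus $\id-\beta^*$ splits as $0\oplus(\id-\beta^*|_{\tilde K^0})$, adding a $\Z$ to both kernel and cokernel and yielding $\ker(\id-\beta^*)\cong\Z^{d}$ and $\coker(\id-\beta^*)\cong\Z^{d}\oplus F_0$ on $K^0(W)$, as required.

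I do not expect a serious obstacle beyond bookkeeping; the single point needing care is that the suspended pieces $\Sigma W_j'$ contribute nothing to $\tilde K^0(W)$, which is exactly why Lemma~\ref{cyclic} was arranged to produce complexes with vanishing $K^1$: then $\tilde K^0(\Sigma W_j')\cong\tilde K^1(W_j')=0$, so the torsion introduced for $F_1$ does not leak into degree zero (and symmetrically the $W_i$ have vanishing $K^1$, keeping degree one clean). The degenerate cases $d=1$, $F_0=0$, or $F_1=0$ are subsumed by allowing empty wedges, with the convention that the empty wedge is the basepoint.
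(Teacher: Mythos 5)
Your proposal is correct and follows essentially the same route as the paper: a wedge of $d-1$ even-dimensional spheres with the identity for the free rank, the complexes of Lemma~\ref{cyclic} for the cyclic summands of $F_0$, and suspensions of such complexes for $F_1$, assembled by wedge sum. The only cosmetic difference is that the paper packages the cyclic pieces using \cite[Theorem 5.5]{DPS:MinHomKtheory} before wedging, whereas you invoke wedge-additivity of reduced $K$-theory directly, which amounts to the same computation.
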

\begin{proof}
 Let $V$ to be the wedge of $(d-1)$-spheres with the homeomorphism, $\beta_0=\id$. Then $K^0(V) \cong \Z^d$, $K^1(V)\cong \{ 0 \}$ and $\id-\beta_0^*$ is the zero map on both groups. Hence the induced map on $K^0(W_0)$ satisfies
\[
\ker(\id-\beta_0^*)\cong \Z^d \hbox{ and }\coker(\id-\beta_0^*)\cong  \Z^d,
\]
and the induced map on $K^1(W_0)$ satisfies
\[
\ker(\id-\beta_0^*)\cong \{ 0 \} \hbox{ and }\coker(\id-\beta_0^*)\cong \{ 0 \}.
\]
Since $F_0$ is finite, there exist $m \in \mathbb{N}$ and cyclic groups $C_1, \dots, C_m$ such that $F_0 = \bigoplus_{j=1}^m C_j$. For each $j=1, \dots, m$, apply Lemma~\ref{cyclic} to get a pointed, connected, finite CW-complex $X_j$ with $K_1(X_j) = 0$, and a finite order homeomorphism $\alpha_j : X_j \to X_j$ such that the induced map on $\tilde{K}^0(X_j)$ satisfies
\[ \ker(\id - \alpha_j^*) = \{0\} \text{ and } \coker(\id - \alpha_j^*) \cong C_j.\]

By \cite[Theorem 5.5]{DPS:MinHomKtheory}, there exist a pointed, connected, finite CW-complex $X$ with $K_1(X) = \{0\}$ and $\beta_1 : X \to X$ a finite order basepoint-preserving homeomorphism with $\beta^*_1 = \alpha^*_1 \oplus \cdots \oplus \alpha^*_m$. In particular,
\[ \ker(\id - \beta^*_1) = \{0\} \text{ and } \coker(\id - \beta_1^*) \cong F_0.\]

Repeating the above for the finite abelian group $F_1$, we obtain a pointed, connected, finite CW-complex $\tilde{Y}$ with $K^1(\tilde{Y}) = \{0\}$ and a finite order basepoint-preserving homeomorphism $\tilde{\beta}_2 : \tilde{Y} \to \tilde{Y}$ such that $\ker(\id - \tilde{\beta}^*_1) = \{0\}$ and $\coker(\id - \tilde{\beta}_1^*) \cong F_0$.  Let $Y := S\tilde{Y}$, the suspension of $\tilde{Y}$.  Let $\beta_2  : Y \to Y$ be the finite order homeomorphism induced by $\beta$. Then $K^0(Y) = 0$ and the map induced $\beta_2$ on $K^1(Y)$ satisfies
\[ \ker(\id - \beta^*_2) = \{0\} \text{ and } \coker(\id - \beta_2^*) \cong F_1.\]

Let $W := V \vee X \vee Y$ and $\beta := \beta_0 \vee \beta_1 \vee \beta_2$. Using a similar argument to that of \cite[Theorem 5.5]{DPS:MinHomKtheory}, it now follows that $W$ and $\beta$ satisfy the requirements of the lemma. 
\end{proof}

\begin{theorem} \label{thm:RealizingCPKtheory}
Suppose $d\in \N \setminus \{0\}$ and $F_0$, $F_1$ are finite abelian groups. Then there exists an infinite connected compact metric space $X$ with finitely generated $K$-theory and $\tilde{\beta}: X\rightarrow X$ a minimal homeomorphism such that
\[
K_0(C(X)\rtimes_{\tilde{\beta}} \Z) \cong \Z^d \oplus F_0 \hbox{ and }K_1(C(X)\rtimes_{\tilde{\beta}}\Z)\cong \Z^d\oplus F_1.
\]
\end{theorem}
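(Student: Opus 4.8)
The plan is to build $(X,\tilde{\beta})$ by applying Theorem~\ref{Thm:MainDynamic} to the CW‑complex and homeomorphism furnished by the preceding lemma, and then to extract the $K$-theory of the crossed product directly from the Pimsner--Voiculescu exact sequence. Since that sequence degenerates almost completely once the relevant kernel on $K^1$ vanishes, the computation should be essentially immediate.

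First I would invoke the preceding lemma with the data $d$, $F_0$, $F_1$ to obtain a connected finite CW-complex $W$ --- which, by its construction there, is pointed and carries a \emph{basepoint-preserving} finite order homeomorphism $\beta \colon W \to W$ --- such that, on $K^0(W)$, $\ker(\id - \beta^*) \cong \Z^d$ and $\coker(\id - \beta^*) \cong \Z^d \oplus F_0$, while on $K^1(W)$, $\ker(\id - \beta^*) = 0$ and $\coker(\id - \beta^*) \cong F_1$. Put $G_0 := \tilde{K}^0(W)$ and $G_1 := K^1(W)$ (finitely generated abelian groups), and let $\sigma_0 := \beta^*|_{\tilde{K}^0(W)}$, $\sigma_1 := \beta^*|_{K^1(W)}$, which are finite order automorphisms because $\beta$ is of finite order. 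Theorem~\ref{Thm:MainDynamic} then yields a compact metric space $X$ with $K^0(X) \cong \Z \oplus G_0$, $K^1(X) \cong G_1$, and a minimal homeomorphism $\tilde{\beta} \colon X \to X$ with $\tilde{\beta}^* = \sigma_*$. Under the natural splitting $K^0(X) \cong \Z \oplus \tilde{K}^0(W) = K^0(W)$, in which the $\Z$-summand is the rank component on which every homeomorphism acts as the identity, $\tilde{\beta}^*$ coincides with $\beta^*$; likewise $\tilde{\beta}^* = \beta^*$ on $K^1(X) \cong K^1(W)$. I would record at this point that $X$ is infinite and connected (part of the construction in \cite{DPS:MinHomKtheory}) and that its $K$-theory is finitely generated, since $K^0(X) \cong \Z \oplus G_0$ and $K^1(X) \cong G_1$.

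Consequently $\id - \tilde{\beta}^*$ acts on $K^0(X)$ exactly as $\id - \beta^*$ acts on $K^0(W)$, and on $K^1(X)$ exactly as $\id - \beta^*$ acts on $K^1(W)$, so the kernel and cokernel data from the preceding lemma carry over unchanged:
\[
\ker(\id - \tilde{\beta}^* \mid K^0(X)) \cong \Z^d, \qquad \coker(\id - \tilde{\beta}^* \mid K^0(X)) \cong \Z^d \oplus F_0,
\]
\[
\ker(\id - \tilde{\beta}^* \mid K^1(X)) = 0, \qquad \coker(\id - \tilde{\beta}^* \mid K^1(X)) \cong F_1.
\]
Substituting these into the Pimsner--Voiculescu six-term sequence for $C(X) \rtimes_{\tilde{\beta}} \Z$ and decomposing it into two short exact sequences, I would obtain
\[
0 \longrightarrow \Z^d \oplus F_0 \longrightarrow K_0(C(X) \rtimes_{\tilde{\beta}} \Z) \longrightarrow 0
\]
and
\[
0 \longrightarrow F_1 \longrightarrow K_1(C(X) \rtimes_{\tilde{\beta}} \Z) \longrightarrow \Z^d \longrightarrow 0 .
\]
The first gives $K_0(C(X) \rtimes_{\tilde{\beta}} \Z) \cong \Z^d \oplus F_0$ outright, and the second splits because $\Z^d$ is free, so $K_1(C(X) \rtimes_{\tilde{\beta}} \Z) \cong \Z^d \oplus F_1$, as required.

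I do not expect a serious obstacle: the substantive work is already packaged in the preceding lemma and in Theorem~\ref{Thm:MainDynamic}. The only thing demanding attention is the bookkeeping around the splitting $K^0 = \Z \oplus \tilde{K}^0$ with the homeomorphism trivial on the $\Z$-summand --- this is what lets the lemma's computation on $K^0(W)$ be reused for $K^0(X)$, and, more importantly, what forces $\ker(\id - \tilde{\beta}^* \mid K^1(X)) = 0$, so that no extension problem arises for $K_0$; for $K_1$ the extension is split merely because its quotient $\Z^d$ is free. (The hypothesis $d \ge 1$ is exactly what the lemma requires, and is in any case automatic since the unitary implementing $\tilde{\beta}$ already contributes a copy of $\Z$ to $K_1$.)
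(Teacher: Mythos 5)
Your proposal is correct and follows the same route as the paper: invoke the preceding lemma to get $(W,\beta)$ with the prescribed kernel/cokernel data, feed it into Theorem~\ref{Thm:MainDynamic} to obtain $(X,\tilde\beta)$ with matching $K$-theory and induced map, and read off the crossed product $K$-groups from the Pimsner--Voiculescu sequence. The paper leaves the final PV computation as ``straightforward to check''; you have simply written out the two short exact sequences (using $\ker(\id-\tilde\beta^*\mid K^1)=0$ for $K_0$ and freeness of $\Z^d$ to split the $K_1$ extension), which is exactly the intended argument.
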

\begin{proof}
By the previous lemma, there exists a connected finite CW-complex $W$ and a finite order homeomorphism $\beta:W \rightarrow W$ such that the induced map on $K^0(W)$ satisfies
\[
\ker(\id-\beta^*)\cong \Z^d \hbox{ and }\coker(\id-\beta^*)\cong \Z^d\oplus F_0
\]
while the induced map on $K^1(W)$ satisfies
\[
\ker(\id-\beta^*)\cong \{ 0 \} \hbox{ and }\coker(\id-\beta^*)\cong F_1.
\]
Now apply Theorem~\ref{Thm:MainDynamic} to the finite order homeomorphism $\beta:  W  \rightarrow W $ to obtain an infinite compact metric space $X$ with $K^j(X) \cong K^j(W)$, $j = 0,1$, and a minimal homeomorphism $\varphi : X \to X$ such that $\varphi^* = \beta^*$. It is straightforward to check, using the Pimsner--Voiculescu exact sequence, that $C(X) \rtimes_{\varphi}\Z$ has the required $K$-theory groups.
\end{proof}

Recall that $\mathcal{C}$ denotes the class of simple, separable, unital, infinite-dimensional $\mathrm{C}^*$-algebras with finite nuclear dimension and which satisfy the UCT. In other words, $\mathcal{C}$ consists of those $\mathrm{C}^*$-algebras which are classifiable.

\begin{corollary}
For any $d \in \mathbb{N} \setminus \{0\}$ and any pair of finite abelian groups $F_0, F_1$, there exists a minimal dynamical system $(X, \varphi)$ such for the crossed product $A := C(X) \rtimes_{\varphi} \mathbb{Z}$, we have
\begin{enumerate}
\item $A \in \mathcal{C}$,
\item the pointed ordered $K_0$-group $(K_0(A),  K_0(A)_+,  [1])$ is isomorphic to  $( \mathbb{Z}^d \oplus F_0, \,  \mathbb{Z}_{> 0} \oplus \mathbb{Z}^{d-1} \oplus F_0 \cup (0_{\mathbb{Z}^d},0_{F_0}),\, (1, 0_{\mathbb{Z}^{d-1}}, 0_{F_0}))$,
\item $K_1(A) \cong \mathbb{Z}^d \oplus F_1,$
\item $A$ has a unique tracial state,
\item $r : T(A) \to SK_0(A)$ satisfies $r(\tau)((n_1, \dots, n_d), g)) = n_1$,
\item $A$ has no non-trivial projections. 
\end{enumerate}
\end{corollary}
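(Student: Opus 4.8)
The plan is to reuse the dynamical system already produced in the proof of Theorem~\ref{thm:RealizingCPKtheory}. That proof obtains $X$ and $\varphi$ by applying Theorem~\ref{Thm:MainDynamic} to a finite order homeomorphism $\beta$ of a connected finite CW-complex, and Theorem~\ref{Thm:MainDynamic} additionally allows us to take $X$ connected with $H^1(X) = \{0\}$ and, crucially, $\varphi$ \emph{uniquely ergodic}. I would fix such $X$, $\varphi$ and set $A := C(X) \rtimes_\varphi \mathbb{Z}$; then (3) and the underlying group of (2) are exactly the content of Theorem~\ref{thm:RealizingCPKtheory}. For later use I would extract from the Pimsner--Voiculescu sequence that, since $\id - \varphi^*$ is injective on $K^1(X)$, the map $K^0(X) \to K_0(A)$ is surjective with kernel $\operatorname{im}(\id - \varphi^*)$; and since $\varphi^*$ fixes the rank class $[1_{C(X)}]$, the splitting $K^0(X) \cong \mathbb{Z}[1_{C(X)}] \oplus \widetilde{K}^0(X)$ is $\varphi^*$-invariant, so $\id - \varphi^* = 0 \oplus (\id - \sigma_0)$ and $K_0(A) \cong \coker(\id - \varphi^*) \cong \mathbb{Z} \oplus \coker(\id - \sigma_0)$, with the free first summand carrying the rank.

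For (1) and (4): since $(X, \varphi)$ is minimal and free, tracial states of $A$ are in affine bijection (via the canonical conditional expectation onto $C(X)$) with $\varphi$-invariant Borel probability measures on $X$, so unique ergodicity gives $A$ a unique tracial state $\tau$, which is (4). Unique ergodicity also forces $(X,\varphi)$ to have mean dimension zero, so $A$ is classifiable by the Toms--Winter theorem exactly as in the proof of Theorem~\ref{classifiableTheorem} (the remaining properties defining $\mathcal{C}$---simple, separable, unital, nuclear, infinite-dimensional, UCT---being automatic for the crossed product of an infinite minimal free $\mathbb{Z}$-system); this is (1).

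For (5) and the unit class in (2), I would trace $\tau$ through the Pimsner--Voiculescu surjection $K^0(X) \twoheadrightarrow K_0(A)$. Letting $\mu$ be the unique invariant measure, $\tau$ restricts on $C(X)$ to integration against $\mu$, so the composite $K_0(C(X)) = K^0(X) \to K_0(A) \xrightarrow{r_A(\tau)} \mathbb{R}$ is the rank homomorphism to $\mathbb{Z}$ (a single, well-defined map because $X$ is connected). As $\operatorname{im}(\id - \varphi^*) \subseteq \widetilde{K}^0(X) = \ker(\mathrm{rank})$, this descends, and in the identification $K_0(A) \cong \mathbb{Z} \oplus \coker(\id - \sigma_0) \cong \mathbb{Z}^d \oplus F_0$ from the first paragraph it becomes $((n_1, \dots, n_d), g) \mapsto n_1$, giving (5); likewise $[1_A]$ is the image of the rank-one generator, namely $(1, 0_{\mathbb{Z}^{d-1}}, 0_{F_0})$.

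It remains to identify $K_0(A)_+$ and to prove (6). Since $A$ is classifiable it is $\mathcal{Z}$-stable, hence $K_0(A)$ is weakly unperforated and $A$ has strict comparison; as $A$ is simple and unital with a unique (faithful) trace, this forces $K_0(A)_+ = \{0\} \cup \{x \in K_0(A) : r_A(\tau)(x) > 0\}$, which by (5) is precisely $\mathbb{Z}_{>0} \oplus \mathbb{Z}^{d-1} \oplus F_0 \cup \{(0_{\mathbb{Z}^d}, 0_{F_0})\}$, completing (2). For (6): if $p \in A$ is a projection, then $[p]$ and $[1_A] - [p]$ both lie in $K_0(A)_+$, so writing $[p] = ((n_1, \dots, n_d), g)$ we get $n_1 \in \{0, 1\}$; in the extreme cases $[p] = 0$ or $[1_A] - [p] = 0$, and since $A$ is stably finite this yields $p = 0$ or $p = 1$. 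I expect the only step requiring genuine care to be the bookkeeping in paragraph three---identifying $[1_A]$ and the pairing $r_A(\tau)$ in the stated coordinates, which hinges on the $\varphi^*$-invariance of the rank splitting and on connectedness of $X$; the remaining items are assembled directly from results already in hand.
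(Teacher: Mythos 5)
Your argument is correct, and it follows the paper's proof for items (1), (3) and (4): take the system from Theorem~\ref{thm:RealizingCPKtheory}, upgraded via Theorem~\ref{Thm:MainDynamic} to be uniquely ergodic (and with $H^1(X)=0$), and get classifiability and the unique trace from unique ergodicity exactly as in Theorem~\ref{classifiableTheorem}. Where you genuinely diverge is in items (2), (5) and (6). The paper first deduces that $A$ has no non-trivial projections from $H^1(X)=0$ together with Connes' theorem \cite{MR605351} (see \cite[Corollary 10.10.6]{Bla:K-theory}), which also gives that the range of the trace on $K_0(A)$ is $\mathbb{Z}$, and only then reads off the order structure and the pairing. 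You go the other way: you compute $r_A(\tau)$ directly by pushing the rank homomorphism through the Pimsner--Voiculescu surjection $K^0(X)\twoheadrightarrow K_0(A)$ (using injectivity of $\id-\varphi^*$ on $K^1(X)$ and $\varphi^*$-invariance of the rank splitting), obtain $K_0(A)_+$ from $\mathcal{Z}$-stability, and then deduce projectionlessness from the order structure and stable finiteness. In particular you never actually use $H^1(X)=0$, so your route is slightly more self-contained on the topological side; the price is that the step identifying $K_0(A)_+$ with $\{0\}\cup\{x: r_A(\tau)(x)>0\}$ quietly invokes not just strict comparison but also the fact that every state on $(K_0(A),K_0(A)_+,[1_A])$ is induced by a (quasi)trace (Blackadar--R{\o}rdam plus Haagerup), which you should cite explicitly; with that reference supplied, the argument is complete. (A small shortcut for (6): since $\tau$ is faithful and $\tau(p)=n_1\in\{0,1\}$ for any projection $p$, one gets $p\in\{0,1\}$ without passing through stable finiteness.)
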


\begin{proof} By the previous theorem, there exists a minimal dynamical system $(X, \varphi)$ with $K_0(A) \cong  \mathbb{Z}^d \oplus F_0$ and $K_1(A) \cong \mathbb{Z}^d \oplus F_1$. By Theorem~\ref{Thm:MainDynamic} we may moreover choose $X$ such that $H^1(X) = 0$ and $(X, \varphi)$ is uniquely ergodic. As observed in the proof of Proposition~\ref{classifiableTheorem}, unique ergodicity implies that $A \in C$. It also implies that $A = C(X) \rtimes_{\varphi} \mathbb{Z}$ has a unique tracial state. That $A$ has no non-trivial projections follows from the fact that $H^1(X) = 0$, together with a theorem of Connes in \cite{MR605351} (see also \cite[Corollary 10.10.6]{Bla:K-theory}). In this case we can also compute the order structure on $K_0$ and pairing map since the range of the trace on $K_0(A)$ is $\mathbb{Z}$ (see \cite[Corollary 10.10.6]{Bla:K-theory}).
\end{proof}

\section{Technical results on the K-theory of orbit-breaking subalgebras} \label{TechResultSection}
In this section we proved a number of technical results related to the $K$-theory of orbit-breaking subalgebras. Let $\mathrm{C}^*(\mathcal{R}_Y)$ denoted the orbit-breaking subalgebras associated to $C(X)\times_{\varphi}\mathbb{Z}$ and $Y\subseteq X$ a closed, non-empty subset. Recall from Theorem \ref{PutExtSeq} that we have the following exact sequence relating the $K$-theory of $A_Y=\mathrm{C}^*(\mathcal{R}_Y)$ to the $K$-theory of $C(Y)$ and $A=C(X) \times_{\varphi} \mathbb{Z}$:
\begin{displaymath} 
\xymatrix{ K^0(Y) \ar[r] & K_0(A_Y) \ar[r]^-{\iota_*} & K_0(C(X) \rtimes_{\varphi} \mathbb{Z}) \ar[d]^{\partial_{\text{OB}}}\\
K_1(C(X) \rtimes_{\varphi} \mathbb{Z} ) \ar[u]^{\partial_{\text{OB}}} & K_1(A_Y) \ar[l]_-{\iota_*} & K^1(Y). \ar[l]}
\end{displaymath}

Below, we establish some further results on the $K$-theory of the orbit-breaking subalgebras which will be needed. The first lemma and its corollary follow from an inspection of the maps the above six-term exact sequence,  details of which are given in \cite{Put:K-theoryGroupoids}.

\begin{lemma} \label{orbitBreakFuncProp}
Let $(X, \varphi)$ be a minimal dynamical system and let $Y_1$ and $Y_2$ be non-empty closed subsets meeting every $\varphi$-orbit at most once. Suppose that $Y_1 \subset Y_2$ and let $j : Y_1 \to Y_2$ denote the inclusion. Let $A := C(X) \rtimes_{\varphi} \mathbb{Z}$. Then $\iota_i : A_{Y_i} \into A$, $i = 1, 2$ and the two exact sequences of Theorem~\ref{PutExtSeq} are compatible. That is, 
\begin{displaymath} 
\xymatrix{ \cdots \ar[r] & K_1(A) \ar@{=}[d] \ar[r] & K^0(Y_2) \ar[d]_{j^*} \ar[r] & K_0(A_{Y_2}) \ar[d] \ar[r]^-{(\iota_2)_*} & K_0(A) \ar[r] \ar@{=}[d]  & \cdots\\
\cdots \ar[r] & K_1(A) \ar[r] & K^0(Y_1) \ar[r] & K_0(A_{Y_1}) \ar[r]^-{(\iota_1)_*} & K_0(A) \ar[r] & \cdots .}
\end{displaymath}

\end{lemma}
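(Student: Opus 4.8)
The plan is to show that the six-term exact sequences of Theorem~\ref{PutExtSeq} for $Y_1$ and $Y_2$ fit into a commutative ladder, using naturality of the constructions involved. First I would recall the precise description of the maps from \cite{Put:K-theoryGroupoids}: the boundary map $\partial_{\mathrm{OB}} : K_*(A) \to K^{*}(Y)$ arises from an extension (or a mapping-cone / Mayer--Vietoris type argument) built from the inclusion $A_Y \into A$ together with the quotient encoding the ``broken'' piece supported near $Y$. Concretely, in the orbit-breaking picture one has an ideal of $A_Y$ (coming from functions vanishing on a neighbourhood of $Y$) which is isomorphic to an ideal of $A$, and the quotient of $A_Y$ by that ideal is (stably, or Morita equivalently) $C(Y)$; the long exact sequence in $K$-theory of that extension, spliced with the identification of the ideal's $K$-theory with $K_*(A)$, produces the sequence of Theorem~\ref{PutExtSeq}.

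The key step is then naturality of this extension in $Y$. When $Y_1 \subseteq Y_2$, the orbit-breaking relations satisfy $\mathcal{R}_{Y_2} \subseteq \mathcal{R}_{Y_1} \subseteq \mathcal{R}_{\varphi}$ (breaking at a larger set breaks more orbits), giving unital inclusions $A_{Y_2} \into A_{Y_1} \into A$ compatible with $\iota_1, \iota_2$, which is exactly the right-hand square $K_0(A_{Y_2}) \to K_0(A_{Y_1}) \to K_0(A)$ with the identity on $K_0(A)$. For the left-hand square I would observe that the common ideal is the same for both (it depends only on $A$ and the fact that the broken piece is supported near $Y_i$), so the two extensions sit in a commuting diagram of short exact sequences with the vertical map on quotients being the $^*$-homomorphism $C(Y_2) \to C(Y_1)$ induced by restriction along $j : Y_1 \into Y_2$. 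Applying the naturality of the $K$-theory long exact sequence for a morphism of extensions then yields commutativity of every square in the ladder, with the map $K^0(Y_2) \to K^0(Y_1)$ being $j^*$ and the map $K_1(A) \to K^0(Y_2)$, $K_1(A) \to K^0(Y_1)$ being $\partial_{\mathrm{OB}}$ in each row, so that the triangle involving $\partial_{\mathrm{OB}}$ and $j^*$ commutes as claimed.

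The main obstacle I anticipate is making the identification of the two extensions precise: one must verify that the ideal of $A_{Y_2}$ used to build the $Y_2$-sequence maps correctly into the ideal of $A_{Y_1}$ used for the $Y_1$-sequence (a priori these are built from functions vanishing near $Y_2$ versus near $Y_1$, and $Y_1$ is smaller so its ideal is larger), and that the induced map on quotients really is $j^*$ rather than something twisted. This is essentially a bookkeeping matter once one unwinds the groupoid-level description in \cite[Example 2.6]{Put:K-theoryGroupoids}: the groupoid homomorphisms $\mathcal{R}_{Y_2} \hookrightarrow \mathcal{R}_{Y_1} \hookrightarrow \mathcal{R}_{\varphi}$ are open inclusions, and the associated ``boundary'' unit spaces are $Y_2$ and $Y_1$ respectively, with the inclusion $Y_1 \into Y_2$ inducing the relevant restriction map. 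So the proof reduces to checking that Putnam's construction in \cite{Put:K-theoryGroupoids} is functorial with respect to this nested family of open subgroupoids, after which the commutativity of the ladder—and in particular of the remaining squares not displayed—follows formally.
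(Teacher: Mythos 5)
Your overall strategy---reduce the lemma to naturality of the construction producing the six-term sequence, using the nested open inclusions $\mathcal{R}_{Y_2}\subseteq\mathcal{R}_{Y_1}\subseteq\mathcal{R}_\varphi$ and hence the unital inclusions $A_{Y_2}\into A_{Y_1}\into A$---is the right one, and your identification of the vertical map $K_0(A_{Y_2})\to K_0(A_{Y_1})$ with the map induced by this inclusion is correct. (The paper itself offers no more detail than this: it asserts the lemma ``follows from an inspection of the maps'', deferring to \cite{Put:K-theoryGroupoids}.)

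However, the mechanism you propose for producing the sequence of Theorem~\ref{PutExtSeq}---a short exact sequence of $\mathrm{C}^*$-algebras with an ideal of $A_Y$ ``coming from functions vanishing on a neighbourhood of $Y$'' and quotient stably isomorphic to $C(Y)$---does not exist. Since $Y$ meets every orbit at most once, $\mathcal{R}_Y$ is minimal (Proposition~\ref{equDense}), so $A_Y$ is simple and has no nontrivial closed ideals; likewise $A$. The sequence in Theorem~\ref{PutExtSeq} is not the $K$-theory sequence of an extension but the relative $K$-theory sequence of the pair $A_Y\subset A$, in which the relative groups $K_*(A_Y;A)$ are computed via the excision theorem of \cite{Put:Excision}: $K_*(A_Y;A)\cong K_*(C_Y;C)$ with $C\cong C(Y)\otimes\mathcal{K}(\ell^2(\Z))$, as recalled in Section~\ref{TechResultSection}. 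Consequently the naturality you need is not naturality of the six-term sequence of an extension, but rather (i) naturality of the relative sequence under $A_{Y_2}\into A_{Y_1}$, which is formal (a partial isometry with source and range projections in matrices over $\widetilde{A_{Y_2}}$ also has them over $\widetilde{A_{Y_1}}$), and (ii) compatibility of the two excision isomorphisms with the map $K_0(C^{(2)})\to K_0(C^{(1)})$, i.e.\ with $j^*:K^0(Y_2)\to K^0(Y_1)$. Note also that the auxiliary algebras $C^{(i)}\cong C(Y_i)\otimes\mathcal{K}$ are genuinely different for the two sets, so there is no single ``common ideal'' as you claim. Point (ii) is exactly the ``inspection of the maps'' the paper alludes to and is the substantive content of the lemma; your write-up replaces it with an extension that is not there, so as it stands the argument has a real gap.
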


In the next corollary, (\ref{NCP}) was also observed by Phillips in \cite[Theorem 4.1(3)]{MR2320644}.

\begin{corollary} \label{corOrbitBreakYandPoint}
Applying the previous lemma to the case $Y_1=\{y \}$ for some $y \in X$, $Y_2$  connected and $K_1(A) \cong \Z$, we have 
\begin{displaymath} 
\xymatrix{ \cdots \ar[r] &  \mathbb{Z} \ar@{=}[d] \ar[r] &  \mathbb{Z} \oplus \tilde{K}^0(Y_2) \ar[d]_{j^*} \ar[r] & K_0(A_{Y_2}) \ar[d] \ar[r]^-{(\iota_2)_*} & K_0(A) \ar[r] \ar@{=}[d]  & \cdots\\
\cdots \ar[r] & \mathbb{Z} \ar[r] & K^0(\{y\}) \cong \mathbb{Z} \ar[r] & K_0(A_{\{y\}}) \ar[r]^-{(\iota_1)_*} & \ar[r] K_0(A) & \cdots .}
\end{displaymath}

Moreover, 
\begin{enumerate}
\item the map $K_0(A_{\{ y \}}) \rightarrow K_0(A)$ is an isomorphism; \label{NCP}
\item the map $\Z \rightarrow K^0(Y_1)=K^0(\{ y\})\cong \Z$ is an isomorphism; 
\item the map $K^0(Y_2)  \cong \Z \oplus \tilde{K}^0(Y_2) \rightarrow K^0(\{ y \}) \cong \Z$ is given by 
\[ (n, y) \in \Z \oplus \tilde{K}^0(Y_2) \mapsto n \in \Z. \]
\end{enumerate}
\end{corollary}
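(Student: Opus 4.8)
The plan is to derive everything from the compatibility diagram of Lemma~\ref{orbitBreakFuncProp} together with the known behaviour of the six-term exact sequence of Theorem~\ref{PutExtSeq} in the case $Y = \{y\}$ a single point. First I would recall from \cite{Put:K-theoryGroupoids} (or from Proposition~\ref{equDense} combined with the AF-like structure of $A_{\{y\}}$, as noted by Phillips in \cite[Theorem 4.1(3)]{MR2320644}) that breaking the orbit at one point produces an algebra whose $K$-theory maps isomorphically: the point of the argument is that in the bottom row of the diagram, $K^0(\{y\}) \cong \Z$ and the connecting map $K_1(A) \to K^0(\{y\})$ is, up to sign, the map $\Z \to \Z$ sending the generator $[u]$ of $K_1(A)$ to the generator $1$ of $K^0(\{y\})$ (this is the content of item (2), which I would establish first). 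Indeed the boundary map $\partial_{\mathrm{OB}} : K_0(A) \to K_1(A)$ in Theorem~\ref{PutExtSeq} vanishes on the class $[1_A]$ and, more relevantly, the composite $K^0(\{y\}) \to K_0(A_{\{y\}}) \to K_0(A)$ fits into an exact sequence where the incoming map from $K_1(A) \cong \Z$ is surjective onto $\tilde K^0(\{y\}) = 0$'s complement — i.e. the generator of $K^0(\{y\})$ is hit, forcing the map $K^0(\{y\}) \to K_0(A_{\{y\}})$ to be zero and hence, by exactness, $(\iota_1)_* : K_0(A_{\{y\}}) \to K_0(A)$ to be injective; surjectivity on $K_0$ and the corresponding statement on $K_1$ (where $K^1(\{y\}) = 0$) then give item (1).

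Next, for item (2): since the bottom-row connecting map $\Z = K_1(A) \to K^0(\{y\}) = \Z$ must be onto in order for $K^0(\{y\}) \to K_0(A_{\{y\}})$ to be the zero map (which we just used), and a surjective endomorphism of $\Z$ is an isomorphism, this map is an isomorphism; this is exactly statement (2). For item (3), I would use the naturality square on the left of the diagram in Lemma~\ref{orbitBreakFuncProp}: the two connecting maps $K_1(A) \to K^0(Y_2)$ and $K_1(A) \to K^0(\{y\})$ are intertwined by $j^* : K^0(Y_2) \to K^0(\{y\})$, where $j : \{y\} \into Y_2$ is the inclusion. Writing $K^0(Y_2) \cong \Z \oplus \tilde K^0(Y_2)$ via the splitting coming from the basepoint $y \in Y_2$ (here connectedness of $Y_2$ is what makes $K^0(Y_2) \cong \Z \oplus \tilde K^0(Y_2)$ with the $\Z$ being the image of $K^0(\mathrm{pt})$), the map $j^*$ is restriction of a class to the point $y$, which by definition of the reduced $K$-theory splitting is exactly the projection $(n, \xi) \mapsto n$. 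Combined with item (2) (that $K_1(A) \to K^0(\{y\})$ is an isomorphism), the commuting square identifies $j^*$ with the claimed projection $\Z \oplus \tilde K^0(Y_2) \to \Z$, which is item (3).

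The genuine content — and the step I expect to be the main obstacle to write carefully — is establishing item (1), the isomorphism $K_0(A_{\{y\}}) \xrightarrow{\sim} K_0(A)$, because this requires knowing precisely that the connecting map $\partial_{\mathrm{OB}}$ out of $K_0(A)$ is zero and that the map into $K_0(A_{\{y\}})$ from $K^0(\{y\})$ is zero; both facts rest on an explicit inspection of the maps in Putnam's six-term sequence \cite{Put:K-theoryGroupoids}, which is why the statement of the corollary is careful to say ``follows from an inspection of the maps''. Concretely: one knows $K_1(A)$ contains the canonical $\Z$ generated by $[u]$; the connecting map $K_1(A) \to K^0(\{y\}) \cong \Z$ in this sequence is (up to sign) given by evaluating the ``winding'' of the implementing unitary along the orbit at $y$, and since this sends the generator $[u]$ to a generator, it is surjective, hence an isomorphism of $\Z$ with $\Z$. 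Exactness then kills the map $K^0(\{y\}) \to K_0(A_{\{y\}})$ and, since $K^1(\{y\}) = 0$, also forces $\partial_{\mathrm{OB}} : K_0(A) \to K_1(A_{\{y\}}) = K_1(A)$ to have image contained in $\ker(K_1(A) \to K^0(\{y\})) = 0$, i.e. $\partial_{\mathrm{OB}} = 0$; hence $(\iota_1)_*$ is an isomorphism on both $K_0$ and $K_1$. Everything else is then a diagram chase through the compatibility square of Lemma~\ref{orbitBreakFuncProp}.
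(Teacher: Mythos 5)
Your overall strategy is the paper's own: the paper offers no separate proof of this corollary beyond the remark that it ``follows from an inspection of the maps'' of the six-term sequence of Theorem~\ref{PutExtSeq}, and the inspection you carry out --- namely that $\partial_{\mathrm{OB}}$ sends the class of the implementing unitary $u$ to a generator of $K^0(\{y\})\cong\Z$ --- is exactly the computation the paper performs explicitly in Section~\ref{TechResultSection}, where one finds $\partial_{\mathrm{OB}}[u]_1=-[e_1^+]_0$, identified with $-[1]_0$ in $K^0(Y)$. From that single fact, together with $K^1(\{y\})=0$, items (1)--(3) follow by exactness and the naturality square of Lemma~\ref{orbitBreakFuncProp}, as you say. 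Note that surjectivity of $\partial_{\mathrm{OB}}\colon K_1(A)\to K^0(\{y\})$ needs only that $\partial_{\mathrm{OB}}[u]$ generates the \emph{target}, not that $[u]$ generates $K_1(A)\cong\Z$, so your argument does not depend on the latter.

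Two slips should be repaired. First, the write-up of items (1) and (2) reads circularly: you deduce (1) from surjectivity of the connecting map, and then justify (2) by saying the connecting map ``must be onto in order for'' the statement ``which we just used.'' Reorder so that the explicit identification of $\partial_{\mathrm{OB}}[u]$ with a generator comes first; then (2) and (1) follow in that order. Second, the concluding claim that $(\iota_1)_*$ is ``an isomorphism on both $K_0$ and $K_1$'' is false, as is the identification $K_1(A_{\{y\}})=K_1(A)$: exactness together with item (2) gives $K_1(A_{\{y\}})\cong\ker\bigl(\partial_{\mathrm{OB}}\colon K_1(A)\to K^0(\{y\})\bigr)=0$, so on $K_1$ the inclusion is very far from an isomorphism (for a Cantor minimal system $A_{\{y\}}$ is AF while $K_1(A)$ contains $\Z[u]$). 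The corollary asserts nothing about $K_1$, so this does not affect the result, but the sentence should be deleted. Relatedly, $\partial_{\mathrm{OB}}$ out of $K_0(A)$ lands in $K^1(\{y\})=0$, not in $K_1(A_{\{y\}})$; surjectivity of $(\iota_1)_*$ on $K_0$ is immediate from that and needs no further argument.
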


For the next lemma, recall from Section~\ref{Sect:Prelim} that the boundary maps in the Pimsner--Voiculescu exact sequence are denoted $\partial_{\text{PV}}$, and the boundary maps in the exact sequence of Theorem~\ref{PutExtSeq} are denoted $\partial_{\text{OB}}$.

\begin{lemma} \label{boundaryMapLemma}
Let $(X, \varphi)$ be a minimal dynamical system with $X$ an infinite compact metric space. Suppose $Y \subseteq X$ is a closed subset of $X$ such that $\varphi^{n}(Y) \cap Y = \emptyset$ for every $n \neq 0$. Let $j : Y \into X$ denote the inclusion of $Y$ in $X$. Then the following diagram commutes:
\begin{displaymath} 
\xymatrix{ K_*(A) \ar[d]_{\partial_{\text{OB}}} \ar[r]^-{\partial_{\text{PV\,\,}}} & K^{*+1}(X) \ar[dl]^{j^*} \\
K^{*+1}(Y). }
\end{displaymath}
\end{lemma}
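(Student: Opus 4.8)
The plan is to trace both boundary maps back to their definitions via mapping cone / extension constructions and check that they agree on the level of $KK$-elements, or, more concretely, to exhibit a commutative diagram of $\mathrm{C}^*$-algebra extensions inducing the two six-term sequences. Recall that the Pimsner--Voiculescu sequence for $A = C(X) \rtimes_\varphi \mathbb{Z}$ arises (after the usual identification) from the Toeplitz extension, and $\partial_{\text{PV}} : K_*(A) \to K^{*+1}(X) = K_{*+1}(C(X))$ is, up to sign, the index/exponential map of that extension; equivalently it is given by Kasparov product with a class in $KK^1(A, C(X))$. On the other hand, the orbit-breaking sequence of Theorem~\ref{PutExtSeq} comes from Putnam's groupoid construction in \cite[Example 2.6]{Put:K-theoryGroupoids}, and $\partial_{\text{OB}} : K_*(A) \to K^{*+1}(Y) = K_{*+1}(C(Y))$ is the connecting map of the corresponding extension relating $\mathrm{C}^*(\mathcal{R}_Y)$, $A$, and an ideal Morita equivalent to $C_0(Y \times \mathbb{Z}_{\geq 1})$ or similar. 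The assertion is then that restricting the first connecting class along $j^* : K_{*+1}(C(X)) \to K_{*+1}(C(Y))$ recovers the second.

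First I would recall precisely, from \cite{Put:K-theoryGroupoids}, the short exact sequence of groupoid $\mathrm{C}^*$-algebras yielding the Theorem~\ref{PutExtSeq} sequence: there is an ideal $I \trianglelefteq A_Y$ with $A_Y / I \cong A$ (or a variant with the roles arranged so that the quotient is $A$), and $I$ is Morita equivalent to $C(Y)$ via the open subgroupoid supported on the ``broken'' part of the relation. The key point is that this extension of $A$ by (a stabilization of) $C(Y)$ is the \emph{pullback} along $j : Y \hookrightarrow X$ of the extension of $A$ by $C(X)$ whose connecting map is $\partial_{\text{PV}}$. Concretely, the orbit-breaking groupoid $\mathcal{R}_Y$ is built from $\mathcal{R}_\varphi$ by deleting arrows crossing $Y$, and the relevant ideal is the $\mathrm{C}^*$-algebra of the ``crossing'' arrows, which fibers over $Y$; when $Y = X$ this degenerates to the Pimsner--Voiculescu picture (breaking the orbit everywhere gives an AF-type decomposition whose boundary map is exactly $\partial_{\text{PV}}$). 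So I would set up the commutative square of extensions
\begin{displaymath}
\xymatrix{
0 \ar[r] & C(X) \otimes \mathcal{K} \ar[r] & \mathcal{E}_X \ar[r] & A \ar[r] \ar@{=}[d] & 0 \\
0 \ar[r] & C(Y) \otimes \mathcal{K} \ar[u]^{j^* \otimes \id} \ar[r] & \mathcal{E}_Y \ar[u] \ar[r] & A \ar[r] & 0
}
\end{displaymath}
where the top row computes $\partial_{\text{PV}}$ and the bottom computes $\partial_{\text{OB}}$; note the vertical map on ideals goes \emph{from} the $Y$-row \emph{to} the $X$-row because restriction of functions $C(X) \to C(Y)$ induces $j^*$ on $K$-theory in the ``wrong'' direction relative to the inclusion $Y \hookrightarrow X$ — this is exactly why $j^*$ (not $j_*$) appears in the statement and in Lemma~\ref{orbitBreakFuncProp}. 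Naturality of the connecting homomorphism in the six-term sequence associated to a morphism of extensions then gives $j^* \circ \partial_{\text{PV}} = \partial_{\text{OB}}$ immediately.

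The main obstacle is the first step: correctly identifying the orbit-breaking extension with the pullback of the Pimsner--Voiculescu extension along $j$, since \cite{Put:K-theoryGroupoids} phrases things in the language of groupoid homology/$K$-theory and one must match conventions (which arrows are deleted, which direction the ideal sits, signs and degree shifts $K_* \leftrightarrow K^{*+1}$). An alternative, perhaps cleaner, route avoids extensions entirely: use Lemma~\ref{orbitBreakFuncProp} with $Y_1 = Y$ and $Y_2 = X$ — that is, allow the larger set to be all of $X$, noting that when $Y_2 = X$ the orbit-breaking algebra $A_X$ is (stably isomorphic to) a crossed product whose Theorem~\ref{PutExtSeq} sequence \emph{is} the Pimsner--Voiculescu sequence, and the boundary map $\partial_{\text{OB}}^{X}$ equals $\partial_{\text{PV}}$. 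Lemma~\ref{orbitBreakFuncProp} then says the two six-term sequences for $Y \subset X$ are compatible via $j^* : K^*(X) \to K^*(Y)$, and comparing boundary maps in that ladder yields $j^* \circ \partial_{\text{PV}} = j^* \circ \partial_{\text{OB}}^{X} = \partial_{\text{OB}}^{Y}$, which is the claim. I would carry out this second approach as the main argument, citing \cite{Put:K-theoryGroupoids} for the identification of the $Y = X$ case with Pimsner--Voiculescu, and use the extension-pullback picture above only as a remark confirming the signs.
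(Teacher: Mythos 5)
Your proposal correctly identifies the shape of what must be shown (the two connecting maps are compatible under restriction $C(X)\to C(Y)$), but the argument you say you would actually carry out does not work as stated. Lemma~\ref{orbitBreakFuncProp} and Theorem~\ref{PutExtSeq} require the closed set to meet every $\varphi$-orbit at most once, and $Y_2 = X$ fails this hypothesis as badly as possible: $\varphi^n(X)\cap X = X$ for all $n$. (Concretely, ``breaking at every point'' collapses $\mathcal{R}_X$ to the diagonal and $A_X$ to $C(X)$, which is not a setting covered by the cited results.) So you cannot simply instantiate the naturality ladder at $Y_2=X$; the assertion that ``the boundary map $\partial_{\text{OB}}^{X}$ equals $\partial_{\text{PV}}$'' is precisely the nontrivial identification the lemma is asking for, and your argument assumes it rather than proves it. The extension-pullback picture in your first paragraph has the same status: the orbit-breaking sequence is not presented in the paper as a two-sided extension $0\to C(Y)\otimes\mathcal{K}\to\mathcal{E}_Y\to A\to 0$, but rather via relative $K$-theory $K_*(A_Y;A)$ and the excision isomorphism $K_*(A_Y;A)\cong K_*(C_Y;C)$ of \cite{Put:Excision}, where $C_Y\subset C$ are algebras over the transversal $Y\times\mathbb{Z}$. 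You correctly flag constructing the commuting square of extensions as ``the main obstacle,'' but you do not overcome it, and naturality of connecting maps cannot be invoked until it is.

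For comparison, the paper's proof is a direct computation. It first makes both boundary maps explicit: $\partial_{\text{PV}}$ via the Toeplitz-type extension $0\to C(X)\otimes\mathcal{K}(\ell^2(\mathbb{Z}_{>0}))\to p\mathcal{E}p\to A\to 0$, and $\partial_{\text{OB}}$ via the relative group $K_0(C_Y;C)$, where the class of a unitary $v\in M_n(\tilde A)$ is sent to $[q(1\oplus 0)]_0-[q(ve_mv^*\oplus(1-e_m))]_0$ for a suitable approximate unit $(e_m)$ of $C$ lying in $C_Y$. It then verifies the claim on the canonical unitary $u$ by computing both sides explicitly ($-[1]_0$ versus $-[e_1^+]_0$, identified under $K_0(C)\cong K_0(C(Y))$), and finally reduces a general unitary to one of the form $u^Nv=\sum_{n=0}^{2N}f_{n-N}u^n$ and carries out an explicit manipulation of lifts and projections ($p-wp(1-d_m)w^*=pwe_mw^*p$, etc.) to match the two formulas after applying restriction $C(X)\to C(Y)$. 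If you want to salvage your approach, you would need to prove the missing identification yourself---essentially redoing this computation---so the proposal as written has a genuine gap rather than an alternative complete proof.
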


 Before beginning the proof of Lemma \ref{boundaryMapLemma}, we need a more explicit description of the boundary maps. The boundary maps $\partial_{\rm\, PV}$ of the Pimsner--Voiculescu exact sequence are discussed first.
   
Regard the crossed product as being generated by functions in $C(X)$ and a unitary $u$ such that $ufu^{*} = f \circ \varphi^{-1}$, for $f$ in $C(X)$. Let $b \in \mathcal{B}(\ell^{2}(\mathbb{Z}))$ denote the bilateral shift and let $\mathcal{E} \subset (C(X) \times_{\varphi } \mathbb{Z}) \otimes \mathcal{B}(\ell^{2}(\mathbb{Z}))$ be the \mbox{$\mathrm{C}^*$-subalgebra} generated $C(X) \otimes 1$ and $u \otimes b$.  Denote by $p^+ \in \mathcal{B}(\ell^{2}(\mathbb{Z}))$ the projection onto $\ell(\mathbb{Z}_{>0})$ and put $p := 1_{C(X)} \otimes p^+$. There is a short exact sequence
     \[
         0 \rightarrow C(X) \otimes \mathcal{K}(\ell^{2}(\Z_{>0})) 
  \rightarrow p\mathcal{E}p \rightarrow C(X) \times_{\varphi} \Z \rightarrow 0,
  \]       
  and the maps $\partial_{\rm\, PV}$ are the index maps of the associated sequence in $K$-theory.

  To describe the second boundary map $\partial_{\text{OB}}$ we elaborate a little on the $K$-theory of the orbit-breaking 
  subalgebras. Let $A := \mathrm{C}^{*}(\mathcal{R}_{\varphi})$ and let \mbox{$A_{Y} := \mathrm{C}^{*}(\mathcal{R}_{Y})$}.

   Let $\mathbb{Z}$ act on $Y \times \mathbb{Z}$ via the trivial action on $Y$ and translation on $\mathbb{Z}$, and let $\mathcal{S}_{\varphi} $ denote its associated groupoid. We view $\mathcal{S}_\varphi$ as the equivalence relation
   \[
   \mathcal{S}_{\varphi} = 
   \{ \left( (y,m), (y, n) \right) \mid y \in Y, m,n \in \mathbb{Z} \}.
   \]
Define
   \begin{eqnarray*}
   \mathcal{S}_{Y}  & =  &  \mathcal{S} \setminus
   \{ \left( (y,m), (y, n) \right)  \mid y \in Y; m < 1 \leq n 
    \text{ or } n < 1 \leq m 
     \},
   \end{eqnarray*}
and let $C := \mathrm{C}^{*}(\mathcal{S}_{\varphi})$ and $C_{Y} := \mathrm{C}^{*}(\mathcal{S}_{Y})$ be the associated $\mathrm{C}^*$-algebras.  Observe that by defining
    \begin{eqnarray*}
   \mathcal{S}_{Y}^{+}  & :=  &
   \{ \left( (y,m), (y, n) \right) \mid y \in Y, m,n \geq 1\}, \\
   C_{Y}^{+} & = &  C^{*}\left(\mathcal{S}_{Y}^{+}  \right), \\
    \mathcal{S}_{Y}^{-}  & :=  &
   \{ \left( (y,m), (y, n) \right) \mid y \in Y, m,n < 1\}, \\
   C_{Y}^{-} & := &  C^{*}\left(\mathcal{S}_{Y}^{-}  \right),
   \end{eqnarray*}
  we have  $\mathcal{S}_{Y} = \mathcal{S}_{Y}^{+} \cup \mathcal{S}_{Y}^{-}$ and $C_{Y} =    C_{Y}^{+}  \oplus C_{Y}^{-}$. It is straightforward to check (see also \cite[Example 2.6]{Put:K-theoryGroupoids}) that for the associated $\mathrm{C}^*$-algebras we have
   \begin{eqnarray*}
   C & \cong & C(Y) \otimes \mathcal{K}\left(\ell^{2}(\mathbb{Z})\right), \\
   C_{Y}^{+} & \cong & C(Y) \otimes \mathcal{K}\left(\ell^{2}(\mathbb{Z}_{>0})\right), \\
   C_{Y}^{-} & \cong & C(Y) \otimes \mathcal{K}\left(\ell^{2}(\mathbb{Z}_{\leq 0})\right).
   \end{eqnarray*}
Note that the inclusion $C_{Y} \subset C$ is considerably more elementary than that of $A_{Y} \subset A$. We want to establish a relation between these two pairs of inclusions.  The map $j: Y \times \mathbb{Z} \rightarrow X$  defined by
   $j(y,n) = \varphi^{n}(y)$ is continuous and  equivariant. By abuse of notation, we also use $j$ to denote $ j \times j$, which is a  groupoid  morphism from $\mathcal{S}_{\varphi} $ to $\mathcal{R}_{\varphi}$. Observe that $j(\mathcal{S}_{Y} )$ is precisely $j(\mathcal{S}_{\varphi}) \cap \mathcal{R}_{Y}$.
    
    If $g \in C_{c}(\mathcal{R}_{\varphi})$, then $g \circ j: \mathcal{S}_{\varphi} \rightarrow \mathbb{C}$ is continuous and although it does not have compact support, we nevertheless have that for any $f \in C_{c}(\mathcal{S}_{\varphi})$ the convolution products $f  (g \circ j), (g \circ j) f$ are both defined and contained in $C_{c}( \mathcal{S}_{\varphi})$. Thus elements of $A$ act as multipliers on $C$. Similarly, elements of $A_{Y}$ acts as multipliers on $C_{Y}$.  

The main result of \cite{Put:Excision} is that there is an isomorphism between relative $K$-groups, $K_{*}(A_{Y}; A) \cong K_{*}(C_{Y};C)$. Thus we can reduce the computation of $K_{*}(A_{Y}; A)$ to the computation of $K_{*}(C_{Y};C)$. Now, given any $\mathrm{C}^*$-algebra $B$ with $\mathrm{C}^*$-subalgebra $B'$, by \cite[Section 2]{Put:Excision}, the relative $K_0$-group,  $K_0(B', B)$ is shown to consist of  elements  represented by partial isometries in matrices over $\tilde{B}$, the unitization of $B$, whose initial and final projections lie in matrices over $\tilde{B'}$. There is an exact sequence
\begin{displaymath}
\xymatrix{ K_1(B) \ar[r] & K_0(B';B) \ar[r] &K_0(B') \ar[d]  \\
 \ar[u] K_1(B') & \ar[l]  K_1(B';B) & \ar[l] K_0(B) ,
}
\end{displaymath}
given as follows. The map $K_1(B) \to K_0(B';B)$ is defined by considering any unitary in matrices over the unitization $\tilde{B}$ as a partial isometry with initial and final projections in matrices over $\tilde{B'}$, and the map $K_0(B'; B) \to K_0(B')$ is defined by sending a partial isometry $v$ to  $[v^{*}v]_{0} - [vv^{*}]_{0} \in K_{0}(B')$. The vertical maps are induced by the inclusion $B'\subset B$.

With the concrete description of the $\mathrm{C}^*$-algebras $C_Y \subset C$, it is a simple matter to check that this yields a short exact sequence
\[
0 \rightarrow K_{0}(C_{Y}; C) \rightarrow K_{0}(C_{Y}^{+})  \oplus K_{0}(  C_{Y}^{-})  \rightarrow
K_{0}(C)  \rightarrow 0,
\]
and that the inclusions $C_{Y}^{+}, C_{Y}^{-} \subset C$ both induce isomorphisms on $K$-theory. Consequently, if $q:= 1 \oplus 0$ in the multiplier algebra of $C_{Y}^{+} \oplus C_{Y}^{-}$,  the map from $K_{0}(C_{Y};C)$ to $K_{0}(C)$ 
taking the class of a partial isometry $v$ in $C$ with initial and final projections in $C_{Y}^{+} \oplus C_{Y}^{-}$ to $[v^{*}vq]_{0} - [vv^{*}q]_{0}$, is an isomorphism.
    
  We now complete our description of the map $\partial_{\text{OB}}$. For each $m \geq 1$, let $e_{m}$ be the characteristic function of the compact open set 
  \[ \{ (y, i, i) \mid |i| \leq m \} \subset \mathcal{S}_{\alpha}.\]
    These elements form an approximate unit for $C$ which lies in $C_{Y}$. We use the same $e_{m}$ to denote $e_{m} \otimes 1_{n}$ in $M_{n}(C)$.

Let $v \in M_n(A)$ be a unitary, which we regard as a partial isometry with initial and final projections in $M_{n}(A')$. Define
  \[
  v_{m} := \left[ \begin{array}{cc} ve_{m} & 0 \\ 1_{n}-e_{m} & 0 \end{array} \right],
  \]
  which is a partial isometry in $M_{2n}( \tilde{C} )$ satisfying  
  \[v_{m}^{*}v_{m} = 1 \oplus 0, \qquad v_{m}v_{m}^{*} =  ve_{m}v^{*} \oplus (1-e_{m} ).\]
    For sufficiently large values of $m$, $v_{m}v_{m}^{*}$
     will lie (at least approximately) in $M_{2n}(\tilde{C_{Y}})$. The isomorphism 
     from \cite{Put:Excision} carries the class of $v$ to that of $v_{m}$, for any sufficiently large $m$. Thus for the map $\partial_{OB}$ we have 
     \[ [v]_{1} \mapsto [q(1 \oplus 0 )]_{0} - [ q(ve_{m}v^{*} \oplus (1-e_{m} ))]_{0} \in K_{0}(C).\]
    
\noindent\emph{Proof of Lemma \ref{boundaryMapLemma}}. It will be convenient to let $e^{+}_{m}$ denote
  the characteristic function of the set $\{ (y, i, i) \mid 1 \leq i \leq m\}$. In other words, 
  $e^{+}_{m} = pe_{m}$, where, as above, $p = 1 \otimes p^+$ where $p^+$ is the projection onto $\ell(\mathbb{Z}_{<0})$. First, we show that $j^*\circ\partial_{\text{PV}}([u]_1) = \partial_{\text{OB}}([u]_1)$ where $u$ is the canonical unitary in the crossed product. We compute the Pimsner--Voiculescu map as follows. Lift $u$ to
$p(u \otimes b)p = u \otimes s$ in $p\mathcal{E}p$, where $s$ is the unilateral shift.
Then
\[
  \partial_{\text{PV}}([u]_{1}) = [1 \otimes ss^{*}]_{0} - [ 1 \otimes s^{*}s]_{0} = - [1 \otimes (I-ss^{*})]_{0}. 
  \]
  Under the isomorphism of $K_{0}(C(X) \otimes \mathcal{K})$ with $K_{0}(C(X))$, this is identified with
  $- [1]_{0}$. On the other hand,  
  \begin{eqnarray*}
  \partial_{\text{OB}}[u]_{1} & =  & [q(1 \oplus 0)]_{0} - [ q(ue_{m}u^{*} \oplus (1-e_{m} ))]_{0} \\
      &  =  &  
  [q]_{0} -  [ e^{+}_{m+1} \oplus (q-e^{+}_{m} ))]_{0} \\
    & = & -[e_{1}^{+}]_{0}.
  \end{eqnarray*}
  Under the isomorphism between $K_{0}(C)$ and $K_{0}(C(Y))$, 
 $[e_{1}^{+}]_{0}$ is identified with
  $[1]_{0}$. As the restriction map from $C(X)$ to $C(Y)$ is unital, we have $j^*\circ\partial_{\text{PV}}([u]_1)) = \partial_{\text{OB}}([u]_1)$, as required.
    
Now we show that $j^*\circ\partial_{\text{PV}} = \partial_{\text{OB}}$ when applied to an arbitrary unitary in  $M_{n}(\tilde{A})$. For simplicity, assume $n=1$. We may approximate this unitary
  by an invertible $v = \sum_{-N}^{N} f_{n}u^{n}$, where $N \in \mathbb{N}$ and $f_{n} \in C(X)$,  $-N \leq n \leq N$.
  As we know the conclusion holds for $u$, multiplying by $u^N$, it suffices to prove it holds for
  $u^{N}v = \sum_{0}^{2N} f_{n-N}u^{n}$.
  
  For $m > 2N$, the initial and final projections of $(u^{N}v)_{m}$ are in $\tilde{C_{Y}}$,
  and  we have
  \[
  \partial_{\text{OB}}[u^{N}v] = [qu^{N}ve_{m}v^{*}u^{N}]_{0} - [e_{m}^{+}]_{0}.
  \]
  On the other hand, we may lift the element $u^{N}v = \sum_{n=0}^{2N} f_{n-N}u^{n}$ to the element $w = \sum_{n=0}^{2N} f_{n}u^{n} \otimes B^{n}$ in $\mathcal{E}$. From the fact that $p(u \otimes b)p=(u \otimes b)p$, it follows that  $pwp=wp$ and is a partial isometry (approximately) with $(wp)^{*}(wp) \approx p$ and  $(wp)(wp)^{*} = wpw^{*} = pwpw^{*}p$, which is a subprojection of $p$. It follows that 
 \[
 \partial_{\text{PV}}[u^{N}v]_{1} = -[p - wpw^{*}]_{0}.
 \]
  
  We also know that 
   \[
   p(u \otimes b)^{*}p = p(u \otimes b)^{*}
   \]
   and hence, provided $m > 1$, 
   \[
   pw(u \otimes b)^{*m}p = pw(u \otimes b)^{*m}.
   \]
If we let $d_{m} = 1 \otimes p_{\ell^{2}\{-m, \ldots, m \}}$ where $ p_{\ell^{2}\{-m, \ldots, m \}}$ is the projection
   onto \newline $\ell^{2}\{-m, \ldots, m \}$, we have 
  \[
   (1-p)d_{m} =   (1-p)(u \otimes b)^{*(m+1)}p (u \otimes b)^{m+1}.
  \]
It is clear that  $wp(1-d_{m})w^{*}$
  is a subprojection of $wpw^{*}$ and hence also of $p$. It follows that 
  \[
 [p-wpw^{*}]_{0} = [p- wp(1-d_{m})w^{*}]_{0} - [wpw^{*}- wp(1-d_{m})w^{*}]_{0}
  \]
  
  For the second term, we have 
 \[
 [wpw^{*}- wp(1-d_{m})w^{*}]_{0} = [wpd_{m}w^{*}]_{0} = [pd_{m}]_{0}.
  \]
  and for the first, we have
 
   \begin{eqnarray*}
   p - wp(1-d_{m})w^{*} & = & p - wpw^{*} + wpd_{m}w^{*} \\
                  & = &  p - pwpw^{*}p + wpd_{m}w^{*} \\
                   & = &  pw(1-p)w^{*}p + wpd_{m}w^{*} \\
                     &  =  & 
            pw(u \otimes b)^{*(m+1)} (u \otimes b)^{m+1} (1-p)w^{*}p \\
   &  &            +   wpd_{m}w^{*} \\ 
        &  =  &  
          pvw(u \otimes b)^{*(m+1)}p (u \otimes b)^{m+1} (1-p)w^{*}p  \\
   &  &       + wpd_{m}w^{*}  \\
     &  =  &  
          pwd_{m} (1-p)w^{*}p 
     + wpd_{m}w^{*}  \\
     & = & pwe_{m}w^{*}p.
            \end{eqnarray*}

  Applying the restriction map from $C(X)$ to $C(Y)$ takes $p$ to $q$ and 
  $d_{m}$ to $e_{m}$, and we are done.
 \qed \\

\section{Projectionless orbit-breaking algebras} \label{sec:orbitbreaking}

With the technical results of the previous section proved, we can explore the range of the Elliott invariant for orbit-breaking subalgebras. The starting point is results from \cite{DPS:DynZ}, which we summarize in the theorem below.

\begin{theorem} \label{ThmAboutZ}
Let $S^d$  be a sphere with odd dimension $d\geq 3$, and let $\varphi : S^d \to S^d$  be a minimal diffeomorphism. Then there exist an infinite compact metric space $Z$ with finite covering dimension and a minimal homeomorphism \mbox{$\zeta : Z \to Z$} satisfying the following:
\begin{enumerate}
\item $Z$ is compact, connected, and homeomorphic to an inverse limit of contractible metric spaces $(Z_n, d_n)_{n \in \mathbb{N}}$.
\item For any continuous generalized cohomology theory we have an isomorphism $H^*(Z) \cong H^*(\{\mathrm{pt}\})$. In particular this holds for \v{C}ech cohomology and $K$-theory. \label{sameCohomAsPoint}
\item There is an almost one-to-one factor map $q : Z \to S^d$ which induces a bijection between $\zeta$-invariant Borel probability measures on $Z$ and $\varphi$-invariant Borel probability measures on $S^d$. \label{FactorMap}
\end{enumerate} 
\end{theorem}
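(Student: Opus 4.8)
The plan is to realise $(Z,\zeta)$ as the inverse limit of a tower of minimal dynamical systems lying over $(S^d,\varphi)$,
\[
(S^d,\varphi)=(Z_0,\zeta_0)\xleftarrow{\ p_0\ }(Z_1,\zeta_1)\xleftarrow{\ p_1\ }(Z_2,\zeta_2)\xleftarrow{\ p_2\ }\cdots,
\]
in which every $Z_n$ is an infinite connected compact metric space with $\dim Z_n$ bounded uniformly in $n$, every $\zeta_n$ is a minimal homeomorphism, and every $p_n$ is a surjective $\zeta_n$-equivariant factor map which is \emph{almost one-to-one} and which induces a bijection between the $\zeta_{n+1}$-invariant and $\zeta_n$-invariant Borel probability measures. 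Granting such a tower, I would set $Z:=\varprojlim(Z_n,p_n)$, $\zeta:=\varprojlim\zeta_n$, and let $q\colon Z\to Z_0=S^d$ be the canonical projection. Then the bulk of the statement is formal: $Z$ is compact, metrizable, connected and finite-dimensional; $\zeta$ is minimal because a non-empty closed $\zeta$-invariant set projects onto a non-empty closed $\zeta_n$-invariant subset of the minimal $(Z_n,\zeta_n)$, hence onto all of $Z_n$, hence equals $Z$; if one arranges the surgeries below so that they never touch a fixed point $x_0\in S^d$, then $q^{-1}(x_0)$ is a single point, and a factor map between minimal systems with even one singleton fibre is automatically almost one-to-one; and, using $C(Z)=\overline{\bigcup_n q_n^{*}C(Z_n)}$, the inverse system of measure bijections $(p_n)_{*}$ assembles into an affine homeomorphism from the $\zeta$-invariant measures on $Z$ onto the $\varphi$-invariant measures on $S^d$.

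The real content is therefore a \textbf{one-step lemma}: given a minimal system $(W,\beta)$ on a finite-dimensional compact metric space together with finitely many (co)homology classes or homotopy data to be destroyed, build a minimal system $(W',\beta')$ of controlled dimension and a surjective equivariant almost one-to-one measure-bijective factor map $p\colon W'\to W$ which kills that data. The phenomenon to be exploited is that an almost one-to-one extension can annihilate cohomology: the model example is the Sturmian almost one-to-one factor map from a Cantor minimal system onto the irrational rotation of $\T$, for which $\check H^{1}(\T)=\Z$ is sent to $\check H^{1}(\text{Cantor set})=0$. To implement this I would realise $W'$ as a ``blow-up'' of $W$ along a closed, totally disconnected, measure-zero subset $K$ meeting each $\beta$-orbit at most once (so that $\bigcup_n\beta^n(K)$ is meagre and null), replacing the points of $K$ by contractible cells glued so as to carry the prescribed classes into a contractible region, and using a Glasner--Weiss-type skew-product/cocycle or an iterated mapping-cylinder description to organise this gluing across the orbit of $K$ while extending $\beta$ to a minimal $\beta'$. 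Only cells of dimension $\le d+1$ are attached, so $\dim W'$ stays bounded, and almost one-to-oneness together with the measure bijection fall out because the blow-up is supported on the meagre null set $\bigcup_n\beta^n(K)$.

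To finish I would iterate the one-step lemma along a diagonal enumeration so that every class appearing in $\varinjlim_n\widetilde H^{*}(Z_n)$ -- and, more to the point, every element of the pro-homotopy of the tower, which is controlled by (co)homology because $S^d$ and all the $Z_n$ stay simply connected, this being where $d$ odd (so that $\varphi$ exists) and $d\ge3$ (so that the surgeries have enough codimension to preserve simple connectivity) are used -- is eventually trivialised. This makes $Z=\varprojlim Z_n$ cell-like; embedding a finite-dimensional cell-like compactum in the Hilbert cube exhibits it as the intersection of a nested sequence of cells, hence as an inverse limit of contractible metric spaces, and continuity of any generalised cohomology theory under such inverse limits then gives $H^{*}(Z)\cong H^{*}(\{\mathrm{pt}\})$. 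The step I expect to be the main obstacle is the one-step lemma: the difficulty is to perform a shape-trivialising surgery on a minimal system while \emph{simultaneously} keeping the new system minimal, connected and of finite covering dimension, and keeping the factor map almost one-to-one; it is precisely the impossibility of achieving this by a self-surgery of $S^d$ (a degree-zero self-map cannot commute with a minimal diffeomorphism) that forces the blow-up along a Cantor-like set, and checking that countably many such steps yield a space of genuinely trivial shape, rather than merely one with vanishing \v{C}ech cohomology, is the second delicate point.
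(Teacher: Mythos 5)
First, a point of comparison: the paper contains no proof of Theorem~\ref{ThmAboutZ} at all --- it is stated explicitly as a summary of results imported from \cite{DPS:DynZ} --- so your attempt can only be measured against the construction in that reference. At the level of architecture you are close to it: $(Z,\zeta)$ really is obtained as an inverse limit of a tower of minimal systems sitting over $(S^d,\varphi)$, linked by almost one-to-one factor maps whose non-injectivity locus is meagre and null for every invariant measure, and your soft reductions (minimality of the limit from minimality of the stages, almost one-to-oneness from a single singleton fibre of a factor map between minimal systems, the measure bijection from nullity of the blow-up locus, preservation of connectedness and of a uniform dimension bound) are all correct and are essentially how \cite{DPS:DynZ} handles those parts. (A small slip: a minimal $\varphi$ has no fixed points; the point $x_0$ you want is simply one outside the saturation of the surgery loci.)

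The genuine gap is the one you name yourself: the ``one-step lemma'' carries the entire content of the theorem and is not proved, and as formulated it is no easier than the statement being proved. The specific missing idea is a mechanism for making the extended homeomorphism $\beta'$ \emph{minimal}. If you blow up $W$ along a closed set $K$ meeting each orbit at most once and attach cells over $\bigcup_{n}\beta^{n}(K)$, you must arrange that the orbit of every point of every attached cell is dense in all of $W'$; a fibrewise extension leaves the attached cells invariant inside the fibres over a single orbit, and a Glasner--Weiss skew product does not apply here, since the fibres are non-trivial only over a meagre set rather than globally. In \cite{DPS:DynZ} this is exactly where the work lies: the homeomorphism is not produced by a single surgery on $(S^d,\varphi)$ followed by an extension, but is constructed inductively together with the tower, with quantitative conditions at each stage that force minimality of the inverse limit; moreover each approximating space is made contractible outright by cone and mapping-cylinder attachments, so conclusions (1) and (2) hold by construction and no pro-homotopy or cell-likeness bookkeeping is needed. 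Your alternative route to (1) has its own soft spot: a finite-dimensional cell-like compactum need not be cellular in any embedding (it need not be an intersection of nested cells), so you would instead have to argue directly that trivial shape allows $Z$ to be rewritten as an inverse limit of contractible compacta. None of this shows the strategy is unworkable, but without a proof of the one-step lemma the proposal is a plan rather than a proof.
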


Let $G_0$ and $G_1$ be arbitrary countable abelian groups. Standard results imply that we can take a compact connected metric space $Y$ with finite dimension and
\[ K^0(Y) \cong \mathbb{Z} \oplus G_0, \qquad K^1(Y)  \cong G_1.\]

We now consider $Y$ fixed for the rest of our discussion in this section. Let $d$ be an odd number large enough such that there exists embedding $Y \hookrightarrow S^{d-2}$. Let $(Z, \zeta)$ be a minimal dynamical system constructed from a minimal diffeomorphism $\varphi : S^d \rightarrow S^d$ as given by Theorem~\ref{ThmAboutZ}.

\begin{lemma}
There exists an embedding $\iota : Y \to Z$ such that $\varphi^n(\iota(Y)) \cap \iota(Y) = \emptyset$ for every $n \in \mathbb{N} \setminus \{0\}$.  
\end{lemma}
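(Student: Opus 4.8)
The plan is to exploit the almost one-to-one factor map $q\colon Z\to S^d$ of Theorem~\ref{ThmAboutZ}(\ref{FactorMap}) to push the problem down to $S^d$, where $\varphi$ is a diffeomorphism and general position is available. Write $X_0=\{x\in S^d: q^{-1}(\{x\})\text{ is a single point}\}$; since $q$ is an almost one-to-one factor map of minimal systems, $X_0$ is a $\varphi$-invariant dense $G_\delta$ and its complement $B:=S^d\setminus X_0$ is a $\varphi$-invariant meager $F_\sigma$ which, by the finer properties of the construction of $Z$ in \cite{DPS:DynZ}, is small (its non-injectivity locus is carried by a countable union of $\varphi$-orbits, hence is zero-dimensional). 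I would first produce an embedding $e\colon Y\to S^d$ with $e(Y)\subseteq X_0$ and $e(Y)\cap\varphi^n(e(Y))=\emptyset$ for all $n\neq 0$, and then lift it. The lift is immediate: as $e(Y)\subseteq X_0$ is compact, $q^{-1}(e(Y))$ is compact and $q$ is injective on it, so $q$ restricts to a homeomorphism $q^{-1}(e(Y))\to e(Y)$; setting $\iota:=(q|_{q^{-1}(e(Y))})^{-1}\circ e$ gives an embedding $Y\to Z$ with $q\circ\iota=e$. If $\zeta^n(\iota(y))=\iota(y')$ with $n\neq 0$, then applying $q$ and using $q\circ\zeta=\varphi\circ q$ yields $\varphi^n(e(y))=e(y')$, contradicting the choice of $e$; hence $\zeta^n(\iota(Y))\cap\iota(Y)=\emptyset$ for every $n\neq 0$, as required.

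To build $e$ I would run a Baire category argument in the Polish space $C(Y,S^d)$ with the supremum metric. Because $d$ is taken large -- in particular $2\dim Y+1\le d$, which is forced once $Y\into S^{d-2}$ for the finite-dimensional spaces $Y$ at hand, enlarging $d$ at the outset if necessary -- the embeddings form a dense $G_\delta$ by the classical embedding theorem for maps of finite-dimensional compacta into manifolds. For each $n\in\Z\setminus\{0\}$ the set $\mathcal G_n=\{e: e(Y)\cap\varphi^n(e(Y))=\emptyset\}$ is open (its complement is closed by a compactness argument on $Y$) and is dense: given $e_0$ and $\varepsilon>0$, a general position argument perturbs $e_0$ by less than $\varepsilon$ to some $e$ for which the incidence set $\{(y,y')\in Y\times Y:\varphi^n(e(y))=e(y')\}$ is empty, which is possible since $\dim(Y\times Y)=2\dim Y<d$, the codimension of the diagonal in $S^d\times S^d$. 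Finally, writing $B=\bigcup_k B_k$ with each $B_k$ closed, nowhere dense and zero-dimensional, each $\{e: e(Y)\cap B_k=\emptyset\}$ is open, and dense because $\dim Y<d$ and $B_k$ is zero-dimensional. Intersecting the embeddings with all the $\mathcal G_n$ and all the sets $\{e:e(Y)\cap B_k=\emptyset\}$ produces a comeager, hence non-empty, subset of $C(Y,S^d)$; any member $e$ of it has the required properties, and the lifting step finishes the proof.

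The hard part is the genericity input: that embeddings $Y\to S^d$ meeting each $\varphi$-orbit at most once are comeager, and that $e(Y)$ can be slid off the non-injectivity locus of $q$. The first is a transversality/general-position statement (obtained by approximating $e$ by an embedding in general position and counting dimensions) and it genuinely needs $2\dim Y<d$ -- this is exactly what the hypothesis ``$d$ large enough that $Y\into S^{d-2}$'' is there to secure. The second needs more about $q$ than Theorem~\ref{ThmAboutZ} states verbatim, namely that its non-injectivity locus is zero-dimensional (a countable union of $\varphi$-orbits), since a general compact $(d-2)$-dimensional subset of $S^d$ cannot be perturbed off an arbitrary nowhere dense closed set; this smallness is a feature of the construction in \cite{DPS:DynZ}.
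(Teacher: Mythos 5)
Your strategy---embed $Y$ generically into $S^d$ so that its image avoids the non-injectivity locus of $q$ and meets each $\varphi$-orbit at most once, then lift through $q$---is not the paper's argument, and as written it has two genuine gaps. First, the general-position step needs $2\dim Y < d$, and you assert this is ``forced once $Y \into S^{d-2}$''; it is not. The hypothesis $Y \into S^{d-2}$ only gives $\dim Y \le d-2$, and for example $Y = S^{d-2}$ satisfies it while $2(d-2) \ge d$ for all $d \ge 4$. (One could enlarge $d$ further, since $Y$ is finite-dimensional, but then you are replacing the stated hypothesis rather than using it.) Second, and more seriously, your density argument for $\{e : e(Y) \cap B = \emptyset\}$ rests on the non-injectivity locus $B$ of $q$ being zero-dimensional (a countable union of orbits). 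Theorem~\ref{ThmAboutZ} only says that $q$ is almost one-to-one, i.e.\ that $B$ is meager; you acknowledge that zero-dimensionality is an extra input from \cite{DPS:DynZ}, but you neither verify it nor can it be extracted from anything stated in this paper. Note that $B$ is $\varphi$-invariant and nonempty (at least one fibre is large enough to contain a copy of $S^{d-2}$), hence dense, so sliding a $(d-2)$-dimensional compactum off $B$ is exactly the kind of claim that needs the fine structure of the construction; without it the proof does not close.

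The paper's proof goes the opposite way and explains what the hypothesis $Y \into S^{d-2}$ is actually for: by Lemma~1.13 of \cite{DPS:DynZ} there is an embedding of $S^{d-2}$ into $Z$ whose image lies inside a \emph{single fibre} $q^{-1}\{x\}$. Composing $Y \into S^{d-2} \into q^{-1}\{x\} \subset Z$ gives the embedding $\iota$, and then for $n \neq 0$ one has $q(\zeta^n(\iota(Y))) = \{\varphi^n(x)\}$ and $q(\iota(Y)) = \{x\}$, which are disjoint because the minimal (hence free) action of $\varphi$ on the infinite space $S^d$ has no periodic points. No genericity, no control of the non-injectivity locus, and no dimension count beyond $Y \into S^{d-2}$ is needed.
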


\begin{proof}
In the proof Lemma 1.13 in \cite{DPS:DynZ} there is an embedding of $S^{d-2} \rightarrow Z$ whose image lies in the closed set
$\pi^{-1}\{ x \}$, for some  $x$ in $X$. Compose this with the embedding of $Y$ into the sphere $S^{d-2}$ to get an embedding of $Y$ into $Z$. If $n \neq 0$,  we have $\pi( \zeta^{n} (Y)) = \varphi^{n}(x) \neq x = \pi(Y)$, which implies that $\zeta^{n}(Y) \cap Y $ is empty.
\end{proof}

Let $A := C(Z) \times_{\zeta} \mathbb{Z}$ and $A_Y$ denote the orbit-breaking subalgebra of $A$.

\begin{theorem} \label{K(A_Y)}
The $\mathrm{C}^*$-algebra $A_Y$ satisfies the following
\[K_0(A_Y) \cong K^0(Y) \cong \mathbb{Z} \oplus G_{0},\qquad K_1(A_Y) \cong K^1(Y) \cong G_{1},\]
and the positive cone of $K_0(A_Y)$ is given by
\[ K_0(A_Y)_{+} \cong \{ (n, z) \mid n=0, z=0, \text{ or }n > 0 \}.\]
\end{theorem}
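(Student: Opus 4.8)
The plan is to feed the six-term exact sequence of Theorem~\ref{PutExtSeq} into a diagram-chase, using the computation of the boundary map $\partial_{\text{OB}}$ from Lemma~\ref{boundaryMapLemma} together with the fact that $Z$ has the $K$-theory of a point (Theorem~\ref{ThmAboutZ}\eqref{sameCohomAsPoint}). First I would record, via the Pimsner--Voiculescu sequence applied to $(Z,\zeta)$, that $K_0(A) \cong \Z$ generated by $[1_A]$ and $K_1(A) \cong \Z$ generated by the class of the canonical unitary $u$; here one uses $K^0(Z) \cong \Z$, $K^1(Z) \cong 0$, and that $\zeta^* = \id$ on $K^0(Z)$ so that $\id - \zeta^* = 0$ on both $K^j(Z)$. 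Plugging $A = C(Z)\rtimes_\zeta \Z$ into Theorem~\ref{PutExtSeq} then gives
\begin{displaymath}
\xymatrix{ K^0(Y) \ar[r] & K_0(A_Y) \ar[r]^-{\iota_*} & \Z \ar[d]^{\partial_{\text{OB}}}\\
\Z \ar[u]^{\partial_{\text{OB}}} & K_1(A_Y) \ar[l]_-{\iota_*} & K^1(Y). \ar[l]}
\end{displaymath}

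The crux is to identify the two occurrences of $\partial_{\text{OB}}$. By Lemma~\ref{boundaryMapLemma} the diagram with $\partial_{\text{PV}}$ and $j^* : K^*(Z) \to K^*(Y)$ commutes. For the map $\partial_{\text{OB}} : K_0(A) \to K^1(Y)$: since $K^1(Y) = K^1(Z) = 0$ in degree... wait — more carefully, $\partial_{\text{OB}} : K_0(A) \to K^1(Y)$ factors through $\partial_{\text{PV}} : K_0(A) \to K^1(Z) = 0$, hence is zero; so $\iota_* : K_1(A_Y) \to K_1(A) = \Z$ is surjective. For the map $\partial_{\text{OB}} : K_1(A) \to K^0(Y)$: by Lemma~\ref{boundaryMapLemma} it equals $j^* \circ \partial_{\text{PV}}$, and $\partial_{\text{PV}}([u]_1) = -[1]_0 \in K^0(Z) \cong \Z$ (this is exactly the computation in the proof of Lemma~\ref{boundaryMapLemma}), so $\partial_{\text{OB}}([u]_1) = -j^*[1]_Z = -[1]_Y$, a generator of the free summand $\Z \subset K^0(Y) \cong \Z \oplus G_0$ (identifying $K^0(Y)$ so that $[1]_Y$ corresponds to $(1, 0)$). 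Since $[u]_1$ generates $K_1(A) \cong \Z$, the map $\partial_{\text{OB}} : K_1(A) \to K^0(Y)$ is injective with image the subgroup generated by $[1]_Y$.

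From this the sequence splits into two short pieces. In degree zero: exactness at $K^0(Y)$ gives $0 \to \Z \xrightarrow{\partial_{\text{OB}}} K^0(Y) \to K_0(A_Y) \to \Z \to 0$, where the map $K^0(Y) \to K_0(A_Y)$ has kernel $\Z[1]_Y$; and the final map $\iota_* : K_0(A_Y) \to K_0(A) = \Z$ is surjective (exactness, since $\partial_{\text{OB}}$ out of $K_1(A)$ lands back — one checks the previous map $K^1(Y) \to K_1(A_Y)$, but for the cokernel of $K^0(Y) \to K_0(A_Y)$ being $\Z$ we just need that $\iota_*$ is onto $\ker(\partial_{\text{OB}}: K_0(A)\to K^1(Y)) = K_0(A)$). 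Thus $K_0(A_Y) \cong \bigl(K^0(Y)/\Z[1]_Y\bigr) \oplus \Z \cong G_0 \oplus \Z$, i.e. $K_0(A_Y) \cong \Z \oplus G_0$; moreover one wants the distinguished generator of the new $\Z$ to be $[1_{A_Y}]$, which holds because $\iota_*[1_{A_Y}] = [1_A]$ generates $K_0(A)$. Since the splitting can be arranged so that $\iota_* : K_0(A_Y) \to K_0(A)$ is identified with the projection $\Z \oplus G_0 \to \Z$, this also pins down $[1_{A_Y}] = (1, 0)$. In degree one: $K^1(Y) = G_1 \to K_1(A_Y) \xrightarrow{\iota_*} K_1(A) = \Z \to 0$ with $\iota_*$ surjective (shown above) and with the kernel of $K^1(Y) \to K_1(A_Y)$ being the image of $\partial_{\text{OB}} : K_0(A) \to K^1(Y)$, which is $0$; hence $0 \to G_1 \to K_1(A_Y) \to \Z \to 0$, which splits (free quotient) giving $K_1(A_Y) \cong \Z \oplus G_1$. (A small cosmetic point: the statement asks for $K_1(A_Y) \cong G_1$, so one should double-check whether the intended normalization absorbs the extra $\Z$ — likely the paper's later Corollary reconciles this by a further orbit-breaking or by the choice of $Y$; in any case the short exact sequence above is what the proof produces, and I would present it as $K_1(A_Y) \cong K^1(Y) \oplus \Z$ unless the $\Z$ from $[u]$ is meant to be reabsorbed. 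Looking again at the theorem statement, it says $K_1(A_Y) \cong K^1(Y) \cong G_1$; reconciling this requires $\iota_*$ on $K_1$ to be... so in fact the clean statement is that $K_1(A_Y) \to K_1(A)$ is \emph{not} surjective but rather $\partial_{\text{OB}}: K_0(A) \to K^1(Y)$ — let me re-examine: we need the cokernel/kernel bookkeeping to give $G_1$ exactly, which forces $\iota_*: K_1(A_Y) \to \Z$ to be zero and the preceding map $K^1(Y) \to K_1(A_Y)$ to be an isomorphism; that happens iff $\partial_{\text{OB}}: K_0(A) \to K^1(Y)$ is zero \emph{and} $\iota_*: K_1(A_Y)\to K_1(A)$ is zero, the latter meaning $\partial_{\text{OB}}: K_1(A) \to K^0(Y)$ is \emph{surjective} onto... no, it means $\operatorname{im}(\iota_*) = \ker(\partial_{\text{OB}}: K_1(A) \to K^0(Y)) = 0$ since $\partial_{\text{OB}}$ is injective on $K_1(A)$ — yes, that is consistent with what I showed: $\partial_{\text{OB}}: K_1(A) \to K^0(Y)$ is injective, so $\ker = 0$, so $\iota_*: K_1(A_Y) \to K_1(A)$ is the zero map, so $K^1(Y) \to K_1(A_Y)$ is surjective, and its kernel is $\operatorname{im}(\partial_{\text{OB}}: K_0(A) \to K^1(Y)) = 0$, hence $K_1(A_Y) \cong K^1(Y) \cong G_1$.) I would therefore present the degree-one computation exactly this way.

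For the positive cone: since $A_Y$ is classifiable (Theorem~\ref{classifiableTheorem}, using that $\zeta$ is uniquely ergodic, as the factor map $q: Z \to S^d$ induces a bijection of invariant measures and $S^d$ carries a uniquely ergodic minimal diffeomorphism), $K_0(A_Y)$ is weakly unperforated and the order is determined by the tracial state space via $K_0(A_Y)_+ \setminus\{0\} = \{ x : \tau(x) > 0 \text{ for all } \tau \in T(A_Y)\}$. The restriction $T(A) \to T(A_Y)$ is a bijection (Phillips, quoted after Theorem~\ref{PutExtSeq}), so $T(A_Y)$ is a single point since $A$ is uniquely ergodic, and the induced state on $K_0(A_Y) \cong \Z \oplus G_0$ must factor through $\iota_*$ onto $K_0(A) \cong \Z$ — indeed it equals the unique trace state of $A$ pulled back along $\iota_*$, hence is the projection $(n,z) \mapsto n$ followed by the normalized trace on $K_0(A)$. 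Therefore $(n,z) \in K_0(A_Y)_+ \setminus \{0\}$ iff $n > 0$, giving $K_0(A_Y)_+ = \{(n,z) \mid n = z = 0,\text{ or }n > 0\}$ as claimed. The main obstacle I anticipate is the careful identification of the boundary maps $\partial_{\text{OB}}$ with $j^* \circ \partial_{\text{PV}}$ and the resulting bookkeeping of which generators go where (so that the splittings $K_0(A_Y) \cong \Z \oplus G_0$ and $K_1(A_Y) \cong G_1$ come out with the unit in the right place); once Lemma~\ref{boundaryMapLemma} is in hand this is essentially linear algebra, but getting the order unit and the positive cone right also genuinely uses classifiability and the uniqueness of the trace, so I would be careful to invoke those explicitly.
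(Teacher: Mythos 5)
Your proposal is correct and follows essentially the same route as the paper's proof: the six-term sequence of Theorem~\ref{PutExtSeq} with $K_*(A)\cong\Z$ in both degrees, the boundary map $K_1(A)\to K^0(Y)$ identified (up to sign) as $n\mapsto (n,0)$ --- the paper gets this from Corollary~\ref{corOrbitBreakYandPoint}, you from Lemma~\ref{boundaryMapLemma} together with the explicit computation $\partial_{\text{PV}}([u]_1)=-[1]_0$ --- so that $\iota_*$ vanishes on $K_1$ and is a split surjection on $K_0$, the paper's splitting being the explicit map $(i_1)_*\circ(i_2)_*^{-1}$. Your intermediate assertion that $\iota_*:K_1(A_Y)\to K_1(A)$ is surjective is a misstep (vanishing of $\partial_{\text{OB}}$ on $K_0(A)$ gives surjectivity of $\iota_*$ on $K_0$ and injectivity of $K^1(Y)\to K_1(A_Y)$, not surjectivity on $K_1$), but you catch and correct this yourself, and your strict-comparison argument for the positive cone is sound and in fact supplies a step the paper's proof leaves implicit; note only that it does not require unique ergodicity of $\zeta$, merely that every trace induces the state $(n,z)\mapsto n$ because $K_0(A)\cong\Z[1_A]$ and the pairing factors through $\iota_*$.
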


\begin{proof}
It follows from \cite{DPS:DynZ} that $K_0(A) \cong \mathbb{Z}$ and $K_1(A) \cong \mathbb{Z}$. We have an exact sequence
\begin{displaymath} 
\xymatrix{ & C(Z) \ar[dl]_{i_1} \ar[dr]^{i_2} &\\
A_Y \ar[rr]_{\iota} & & A}
\end{displaymath}
where  the map $i_2$ induces a map $\mathbb{Z} \cong K_0(C(Z)) \to K_0(A) \cong \mathbb{Z}$. By the Pimsner--Voiculescu exact sequence, as calculated in the proof of \cite[Proposition 2.8]{DPS:DynZ},  the above map is an isomorphism.  Consequently, the map $\iota_* : K_0(A_Y) \to K_0(A)$, as given in the six-term exact sequence of Theorem~\ref{PutExtSeq}, is onto. Thus the sequence of Theorem~\ref{PutExtSeq} becomes 
\begin{displaymath}
\xymatrix{ \mathbb{Z} \oplus G_0  \ar[r] & K_0(A_Y) \ar[r]^{\iota_*} & \mathbb{Z} \ar[d]^0\\
\mathbb{Z}  \ar[u]^L & K_1(A_Y) \ar[l] & G_1. \ar[l]}
\end{displaymath}
Furthermore, the map $K_0(A_Y) \to \mathbb{Z}$ splits. To see this, note that $i_2$ induces an isomorphism on $K_0$. Thus, using the commutativity of the first diagram in the proof, the splitting map is  given by $(i_1)_* \circ (i_2)_*^{-1}$. 

To complete the proof, we need to show that $L : \mathbb{Z} \to \mathbb{Z} \oplus G_0$ is the map $n \mapsto (n, 0)$. This follows from Corollary \ref{corOrbitBreakYandPoint}.
\end{proof}

\begin{corollary} \label{cor:fewProjections} Let $G_0$ and $G_1$ be countable abelian groups, $k \in \mathbb{Z}_{>0}$, and let $\Delta$ be a finite-dimensional Choquet simplex. Then $(\Z \oplus G_0, \Z_{\geq 0} \oplus G_0, [1]=(k,0))$ is an ordered abelian group, and if there is a map 
\[ r : \Delta \to S(\mathbb{Z} \oplus G_0), \qquad \tau \mapsto \left((n,g) \mapsto \frac{n}{k}\right) ,\]
then there exists an amenable minimal equivalence relation $\mathcal{R}$ such that 
\[ \Ell(\mathrm{C}^*(\mathcal{R})) \cong (\Z \oplus G_0, \Z_{\geq 0} \oplus G_0, [1]=(k, 0), G_1, \Delta,  r).\]
In particular if $A \in \mathcal{C}$ and $\Ell(A) =    (\Z \oplus G_0,   \Z_{\geq 0} \oplus G_0, [1]=(k,0), G_1, \Delta,  r )$, then $A$ and  $\mathrm{C}^*(\mathcal{R})$ are $^*$-isomorphic.
\end{corollary}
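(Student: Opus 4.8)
The plan is to take the orbit-breaking algebra $A_Y$ furnished by Theorem~\ref{K(A_Y)} and make three adjustments: arrange its tracial state space to be $\Delta$, identify its pairing map, and pass to a matrix amplification so that the class of the unit becomes $(k,0)$ rather than $(1,0)$. For the trace space, I would rerun the construction preceding Theorem~\ref{K(A_Y)}, but starting from a minimal diffeomorphism $\varphi\colon S^d\to S^d$ (with $d$ odd, $d\geq 3$, chosen large enough that $Y\hookrightarrow S^{d-2}$) whose simplex of invariant Borel probability measures is affinely homeomorphic to the prescribed finite-dimensional simplex $\Delta$; the existence of such $\varphi$ is the dynamical input, available from known realization results on odd spheres. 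Feeding this $\varphi$ into Theorem~\ref{ThmAboutZ} produces $(Z,\zeta)$ with $\dim Z<\infty$, the $K$-theory and \v{C}ech cohomology of a point, and---by Theorem~\ref{ThmAboutZ}(\ref{FactorMap})---an affine bijection between the $\zeta$-invariant and $\varphi$-invariant measures, so the invariant-measure simplex of $(Z,\zeta)$ is again $\Delta$. Since for a minimal $\mathbb{Z}$-action the tracial state space of $C(Z)\rtimes_\zeta\mathbb{Z}$ is affinely homeomorphic to the space of invariant measures, and since restriction $T(A)\to T(A_Y)$ is a bijection by \cite[Theorems~6.2,~7.10]{Phi:LargeSubalgebras}, we obtain $T(A_Y)\cong\Delta$. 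All the remaining $K$-theoretic data of $A_Y$ is exactly as in Theorem~\ref{K(A_Y)}: $K_0(A_Y)\cong\mathbb{Z}\oplus G_0$ with positive cone $\{(n,z)\mid n=0,\,z=0\text{ or }n>0\}$ and $[1_{A_Y}]=(1,0)$, and $K_1(A_Y)\cong G_1$.

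To identify the pairing, recall from the proof of Theorem~\ref{K(A_Y)} that, after the splitting, $\iota_*\colon K_0(A_Y)\to K_0(A)\cong\mathbb{Z}$ is the coordinate projection $(n,g)\mapsto n$. Because $[1_A]$ generates $K_0(A)\cong\mathbb{Z}$ and every $\tau\in T(A)$ has $\tau(1)=1$, the induced map $\tau_*$ on $K_0(A)$ is $\id_{\mathbb{Z}}$; combining this with naturality of the pairing and the bijection $T(A)\to T(A_Y)$ gives $r_{A_Y}(\tau|_{A_Y})((n,g))=\tau_*(\iota_*(n,g))=n$ for every trace. Now put $\mathrm{C}^*(\mathcal{R}):=M_k(A_Y)$, which is the groupoid $\mathrm{C}^*$-algebra of the product equivalence relation $E_k\times\mathcal{R}_Y$ on $\{1,\dots,k\}\times Z$, where $E_k$ is the full equivalence relation on $k$ points: this product is \'etale, amenable (a product of amenable groupoids), and minimal (each class is $\{1,\dots,k\}$ times a class of $\mathcal{R}_Y$, which is minimal by Proposition~\ref{equDense}, hence dense). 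Matrix amplification leaves $(K_0,(K_0)_+)$ and $K_1$ unchanged, sends $[1]$ to $k[1_{A_Y}]=(k,0)$, leaves the trace space unchanged via $\tau\mapsto\tau\otimes\tr_{M_k}$, and rescales the pairing by $1/k$ (as $\tr_{M_k}$ of a minimal projection is $1/k$); hence $r_{\mathrm{C}^*(\mathcal{R})}(\tau)((n,g))=n/k$. This yields precisely the asserted isomorphism of Elliott invariants.

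Finally, for the ``in particular'' statement: $C(Z)\rtimes_\zeta\mathbb{Z}\in\mathcal{C}$ by Theorem~\ref{classifiableTheorem} (using $\dim Z<\infty$), hence $A_Y\in\mathcal{C}$, and $\mathrm{C}^*(\mathcal{R})=M_k(A_Y)\in\mathcal{C}$ since $\mathcal{C}$ is closed under matrix amplification ($\mathcal{Z}$-stability, nuclearity and the UCT all pass to $M_k(-)$, while simplicity, unitality, separability and infinite-dimensionality are clear); the classification theorem (Theorem~\ref{ClassThm}) then gives $A\cong\mathrm{C}^*(\mathcal{R})$ whenever $A\in\mathcal{C}$ has the same Elliott invariant. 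I expect the one genuinely non-formal point to be the first step---producing, for each finite-dimensional $\Delta$, a minimal diffeomorphism of an odd sphere of the required (large) dimension with that simplex of invariant measures, and checking compatibility with the hypotheses of Theorem~\ref{ThmAboutZ}; everything downstream is bookkeeping with the six-term sequence of Theorem~\ref{PutExtSeq}, Phillips' trace comparison, and matrix stabilization.
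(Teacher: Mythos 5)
Your proposal is correct and follows essentially the same route as the paper: obtain $(Z,\zeta)$ point-like with invariant-measure simplex $\Delta$ (the paper cites \cite{DPS:DynZ} for exactly the sphere-realization-plus-factor-map argument you spell out), apply Theorem~\ref{K(A_Y)} for the $K$-theory and order, use the trace bijection from \cite{Phi:LargeSubalgebras} for $T(A_Y)\cong\Delta$ and the pairing, amplify by the product with the full equivalence relation on $k$ points to move $[1]$ to $(k,0)$, and invoke Theorem~\ref{ClassThm} for the ``in particular'' clause. The one step you flag as non-formal --- realizing a prescribed finite-dimensional Choquet simplex as the invariant measures of a minimal diffeomorphism of an odd sphere --- is indeed the external dynamical input the paper absorbs into its citation of \cite{DPS:DynZ}.
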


\begin{proof}
By \cite{DPS:DynZ} there exists a minimal dynamical system $(Z, \zeta)$ with $Z$ a point-like space and simplex of $\zeta$-invariant measures given by $\Delta$. By Theorem~\ref{K(A_Y)}, there is  a non-empty closed subset $Y \subset Z$ meeting every $\zeta$-orbit at most once such that the associated minimal equivalence relation  $\mathcal{R}_Y \subset Z \times Z$ has $K$-theory given by $(K_0(\mathrm{C}^*(\mathcal{R}_Y)), K_0(\mathrm{C}^*(\mathcal{R}_Y)) = ( \Z \oplus G_0, \, \mathbb{Z}_+)$ and $K_1(\mathrm{C}^*(\mathcal{R}_Y)) = G_1$.  We have $[1] = (1,0)$. To arrange that $[1] = (k,0)$, we replace $\mathcal{R}_Y \subset Z \times Z$ with the equivalence relation on $\mathcal{R} \subset (Z \times \{1, \dots, k\}) \times (Z \times \{1, \dots, k\})$ which gives us $\mathrm{C}^*(\mathcal{R}) \cong \mathrm{C}^*(\mathcal{R}_Y)\otimes M_k$.  Since $\iota : \mathrm{C}^*(\mathcal{R}_Y) \into C(Z) \rtimes_{\zeta} \mathbb{Z}$ induces a homoemorphism of tracial state spaces, 
\[ T(\mathrm{C}^*(\mathcal{R})) \cong T(\mathrm{C}^*(\mathcal{R}_Y)) \cong T(C(Z) \rtimes_{\zeta} \mathbb{Z}) \cong \Delta,\]
and since 
\[ r_{\mathrm{C}^*(\mathcal{R}_Y)} :    T(\mathrm{C}^*(\mathcal{R}_Y)) \to S(K_0(\mathrm{C}^*(\mathcal{R}_Y)), \qquad (n, g) \mapsto n \in \mathbb{Z},\]
we have
\[ r_{\mathrm{C}^*(\mathcal{R})} :    T(\mathrm{C}^*(\mathcal{R}_Y)) \to S(K_0(\mathrm{C}^*(\mathcal{R}_Y))), \qquad (n, g) \mapsto \frac{n}{k}.\]
Thus \[ \Ell(\mathrm{C}^*(\mathcal{R})) \cong (\Z \oplus G_0, \Z^+, [1]=k, G_1, \Delta,  r),\] and since $\mathrm{C}^*(\mathcal{R})$ is a simple, separable, nuclear, unital, $\mathcal{Z}$-stable $\mathrm{C}^*$-algebra, Theorem~\ref{ClassThm} implies that for any $A \in \mathcal{C}$ we have $A \cong \mathrm{C}^*(\mathcal{R})$.
\end{proof}
\begin{remark}
In the previous corollary, we observe that by taking the class of the unit to be (1,0) the resulting $\mathrm{C}^*$-algebras contain only the trivial projections 0 and 1.
\end{remark}

\section{Orbit-breaking algebras with real rank zero} \label{Subsect:RR0}

In this section, we consider the crossed products and orbit-breaking subalgebras arising from the construction of minimal homeomorphisms on non-homogeneous spaces. In contrast to the $\mathrm{C}^*$-algebras of the previous subsection, these $\mathrm{C}^*$-algebras will always have a plentiful supply of projections: they all have real rank zero (and this can be read directly from the $K$-theory).

Let $I=[0,1]$. Building on results of Floyd \cite{floyd1949} and Gjerde and Johansen \cite{FloGjeJohSys}, the authors proved the following in \cite{DPS:nonHom}:

\begin{theorem} \label{FloGjeJohSysThm}
Let $(K, \varphi)$ be a minimal system with $K$ the Cantor set and let $n \geq 1$ be a natural number. Then there exists 
 minimal system, 
$(\tilde{K}, \tilde{\varphi})$ with a factor map $\pi: (\tilde{K}, \tilde{\varphi}) 
\rightarrow (K, \varphi)$ such that, for each point $x$ in $K$, $\pi^{-1}\{ x \} \cong I^{n}$ or a single point.
Moreover, both of these possibilities occur and the map $\pi$ induces an isomorphism $\pi^{*}:K^{*}(K) \rightarrow K^{*}(\tilde{K})$.
\end{theorem}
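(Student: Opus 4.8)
The plan is to realize $(\tilde K,\tilde\varphi)$ by \emph{inflating} a Bratteli–Vershik model of $(K,\varphi)$: one fattens the discrete fibre over a single orbit into copies of the cube $I^n$ while leaving every other fibre a singleton. This is the strategy of Floyd \cite{floyd1949} and Gjerde–Johansen \cite{FloGjeJohSys}, carried out with $I^n$ in place of the unit interval. First I would fix a properly ordered Bratteli diagram $B=(V,E,\ge)$ with $V=\bigsqcup_{m\ge 0}V_m$, $V_0$ a single vertex, unique infinite minimal path $x_{\min}$ and unique infinite maximal path $x_{\max}$ (so $\lambda_B(x_{\max})=x_{\min}$), whose path space $X_B=\varprojlim(F_m,\rho_m)$ — with $F_m$ the finite set of length-$m$ paths and $\rho_m$ truncation — together with the Vershik map $\lambda_B$ is conjugate to $(K,\varphi)$. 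The orbit to be blown up is $O=\{\lambda_B^k(x_{\min})\mid k\in\mathbb Z\}$, which contains both $x_{\min}$ and $x_{\max}$ and is dense by minimality.

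Next I would build $\tilde K=\varprojlim(\tilde Y_m,\tilde\rho_m)$, where $\tilde Y_m$ is a finite disjoint union of copies of $I^n$ encoding the level-$m$ combinatorics of $X_B$, equipped with a surjection $c_m\colon\tilde Y_m\to F_m$ collapsing each cube to a point, and where the piecewise-affine bonding maps $\tilde\rho_m$ are chosen so that (i) $c_m\circ\tilde\rho_m=\rho_m\circ c_{m+1}$, (ii) the cubes lying over the truncations of $x_{\min}$ (equivalently of $x_{\max}$) are carried homeomorphically onto their images, and (iii) every other cube is collapsed to a single point of its target cube. An inverse limit of compact metric spaces is compact metrizable, so $\tilde K$ is of the required type. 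The factor map is $\pi:=\varprojlim c_m\colon\tilde K\to K$: by (ii), for $x\in O$ the fibre $\pi^{-1}\{x\}$ is a nested intersection of cubes each mapped homeomorphically onto the next, hence $\cong I^n$, while (iii) forces $\pi^{-1}\{x\}$ to be a single point when $x\notin O$; since $O$ is countable and $K$ uncountable, both cases occur. The homeomorphism $\tilde\varphi$ is the transported Vershik map: off $\pi^{-1}(O)$ it is conjugate to $\varphi$ via $\pi$ (using that $O$ is $\varphi$-invariant), and it sends the cube over $\lambda_B^k(x_{\min})$ affinely onto the cube over $\lambda_B^{k+1}(x_{\min})$, the only subtle point being continuity across the "carry" $x_{\max}\mapsto x_{\min}$, arranged by choosing the cube identifications at the maximal and minimal paths compatibly.

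Granting the construction, minimality is formal: a non-empty closed $\tilde\varphi$-invariant $E\subseteq\tilde K$ projects onto a closed $\varphi$-invariant subset of $K$, hence onto $K$; since every cube fibre is connected and every $\tilde\varphi$-orbit projects to a dense $\varphi$-orbit, approximating an arbitrary point of $\tilde K$ by forward iterates of points of $E$ through successive levels $\tilde Y_m$ yields $E=\tilde K$. For the $K$-theory claim, note that each $\tilde Y_m$ is a disjoint union of contractible cubes, so $c_m$ is a homotopy equivalence and induces an isomorphism $c_m^{*}\colon K^{*}(F_m)\xrightarrow{\ \cong\ }K^{*}(\tilde Y_m)$ which, by (i), intertwines $\rho_m^{*}$ with $\tilde\rho_m^{*}$; since $K^{*}(F_m)$ and $K^{*}(\tilde Y_m)$ have vanishing odd part, passing to inverse limits (continuity of $K$-theory under inverse limits of compacta, with no $\varprojlim^{1}$ obstruction) identifies $\pi^{*}\colon K^{*}(K)\to K^{*}(\tilde K)$ with an isomorphism; in particular $K^{1}(\tilde K)=0=K^{1}(K)$ and $K^{0}(\tilde K)\cong K^{0}(K)$.

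The main obstacle is the construction itself, not the verifications: one must set up the inflated system so that $\tilde\varphi$ is a genuine self-homeomorphism of $\tilde K$ — continuous across the carry locus and at points of $\pi^{-1}(O)$, where sequences in $K$ approaching a point of $O$ from different "directions" must be routed to the correct points of the cube fibre — and so that precisely the orbit $O$ acquires $I^n$-fibres while every other fibre degenerates. This bookkeeping is exactly what is inherited, now with $I^n$-fibres, from the Floyd and Gjerde–Johansen constructions.
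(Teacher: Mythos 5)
The paper does not actually prove this statement: it is quoted from the authors' companion paper \cite{DPS:nonHom}, which carries out the Floyd/Gjerde--Johansen blow-up with $I^n$-fibres. Your overall strategy --- inflate one orbit in a Bratteli--Vershik (equivalently, Kakutani--Rokhlin tower) model of $(K,\varphi)$ by realizing $\tilde K$ as an inverse limit of finite disjoint unions of cubes --- is indeed the strategy of that construction. But the one place where you commit to concrete details is wrong. In your conditions (ii)--(iii) the cubes that survive are those over the \emph{initial segments} (truncations) of $x_{\min}$, and a point of $X_B=\varprojlim F_m$ is determined by its initial segments; hence $\pi^{-1}\{x\}$ is a nondegenerate cube only for $x=x_{\min}$, and collapses to a point for every other $x$, including the other points of $O$. (A path cofinal with $x_{\min}$ or $x_{\max}$ generically shares \emph{no} initial segment with either, so your rule degenerates its fibre.) That is fatal: since $\tilde\varphi$ must carry $\pi^{-1}\{x\}$ bijectively onto $\pi^{-1}\{\varphi(x)\}$, it would have to map the cube over $x_{\min}$ onto the singleton over $\lambda_B(x_{\min})$. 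The set of cube-fibre points must be $\varphi$-invariant, and arranging that it is exactly the orbit $O$ --- which in path-space terms is governed by cofinality with $x_{\min}$ or $x_{\max}$, i.e.\ by tails and the tower combinatorics, not by initial segments --- is precisely the bookkeeping you defer to the references.

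The minimality argument also begs the question. From $\pi(E)=K$ you only get that the closed invariant set $E$ meets every fibre; to conclude $E\supseteq\pi^{-1}\{x\}$ when that fibre is a cube you need the return visits of an orbit to a small neighbourhood of the cube to be distributed densely across it. This is not a consequence of connectedness of the fibres or of density of the projected orbit; it is a property that must be engineered into the choice of sub-cube embeddings at each stage (it is the heart of Floyd's example and of the Gjerde--Johansen construction, and is where \cite{DPS:nonHom} does real work). A poor choice of embeddings (e.g.\ all sub-cubes mapped into a fixed corner) produces a non-minimal extension with the same fibre structure. By contrast, your $K$-theory step is essentially correct and is how the cited proof concludes, though the remark about a $\varprojlim^1$ obstruction is misplaced: since $C(\varprojlim \tilde Y_m)=\varinjlim C(\tilde Y_m)$, continuity of operator $K$-theory gives $K^*(\tilde K)=\varinjlim K^*(\tilde Y_m)$ as a \emph{direct} limit, so no derived-limit term arises, and the level-wise homotopy equivalences $c_m$ identify the two direct systems, giving the isomorphism $\pi^*$.
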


We require a lemma about the pointed ordered $K$-theory of the crossed product $\mathrm{C}^*$-algebra associated to the extension of the Cantor minimal system in the previous theorem.  

\begin{lemma} \label{KthFGJsys}
Using the notation above, let $A := C(K) \rtimes_{\varphi} \mathbb{Z}$ and $B := C(\tilde{K}) \rtimes_{\tilde{\varphi}} \mathbb{Z}$. Then the factor map $\pi : (\tilde{K}, \tilde{\varphi})  \to (K, \varphi)$ induces isomorphisms
\[ (K_0(A), K_0(A)_+, [1_A]) \cong  (K_0(B), K_0(B)_+, [1_B]), \qquad K_1(A) \cong K_1(B),\]
and an affine homeomorphism $T(B) \cong T(A)$.
\end{lemma}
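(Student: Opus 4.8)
The plan is to exploit the fact, guaranteed by Theorem~\ref{FloGjeJohSysThm}, that $\pi^* : K^*(K) \to K^*(\tilde K)$ is an isomorphism, and to show that this isomorphism is compatible with the Pimsner--Voiculescu exact sequences for $A$ and $B$. Concretely, since $\pi \circ \tilde\varphi = \varphi \circ \pi$, the induced map $\pi^*$ intertwines $\id - \varphi^*$ on $K^*(K)$ with $\id - \tilde\varphi^*$ on $K^*(\tilde K)$; together with the obvious compatibility of the $\mathrm{C}^*$-algebraic inclusion $C(K) \hookrightarrow C(\tilde K)$ with the inclusions of $C(K)$ into $A$ and $C(\tilde K)$ into $B$, this gives a morphism of the two six-term PV sequences which is an isomorphism on four of the six terms. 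The five lemma then forces $K_j(A) \cong K_j(B)$ for $j = 0,1$, with the isomorphism implemented by the map $B \to$? --- more precisely, by the $^*$-homomorphism $C(K) \rtimes_\varphi \mathbb{Z} \to C(\tilde K) \rtimes_{\tilde\varphi}\mathbb{Z}$ extending $\pi^* : C(K) \to C(\tilde K)$ and sending the canonical unitary to the canonical unitary (this extends to the crossed products since $\pi$ is equivariant).

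First I would set up this induced $^*$-homomorphism $\Pi : A \to B$ and check it intertwines the two PV sequences; this is routine since the PV sequence is natural with respect to equivariant maps. The five lemma applied degreewise then yields that $\Pi_* : K_j(A) \to K_j(B)$ is an isomorphism for $j = 0, 1$. For the order structure on $K_0$: classes of projections in $C(K) \rtimes_\varphi \mathbb{Z}$ map to classes of projections in $C(\tilde K) \rtimes_{\tilde\varphi}\mathbb{Z}$ under $\Pi$, so $\Pi_*(K_0(A)_+) \subseteq K_0(B)_+$, and $\Pi_*([1_A]) = [1_B]$ since $\Pi$ is unital. To get the reverse containment $\Pi_*(K_0(A)_+) \supseteq K_0(B)_+$ I would use that $A$ is a $\mathrm{C}^*$-algebra of a Cantor minimal system, hence has real rank zero and an abundance of projections: every element of $K_0(B)_+$ is of the form $[p]$ for a projection $p$, and by the isomorphism at the level of $K_0$ it equals $\Pi_*([q] - [r])$ for projections $q, r$ over $A$; the positivity of $[p]$ in the simple, stably finite, weakly unperforated ordered group $K_0(B)$ should then be pulled back using that $\Pi_*$ is an order isomorphism --- alternatively, argue directly from the known description of $(K_0(A), K_0(A)_+)$ as a dimension group: $K_0$ of a Cantor minimal system crossed product is the ordered group $C(K,\mathbb{Z})/\{f - f\circ\varphi\}$ with the quotient order, and similarly for $B$ (since $B$ is also built over a minimal system with the same $C(\cdot,\mathbb{Z})$ via $\pi^*$), so $\pi^*$ manifestly preserves positive cones.

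For the tracial state spaces, the cleanest route is to use that $T(A) \cong M_\varphi(K)$, the $\varphi$-invariant Borel probability measures on $K$, and $T(B) \cong M_{\tilde\varphi}(\tilde K)$, each with its weak-$*$ topology as a Choquet simplex, via the canonical conditional-expectation construction; the factor map $\pi$ induces a continuous affine map $\pi_* : M_{\tilde\varphi}(\tilde K) \to M_\varphi(K)$ by push-forward. I would show this is a bijection: injectivity and surjectivity follow because, by Theorem~\ref{FloGjeJohSysThm}, $\pi$ is one-to-one outside the countably many fibres $\pi^{-1}\{x\} \cong I^n$ and these fibres can carry no mass under an invariant measure (the orbit of such an $x$ is infinite because the system is minimal and infinite, so assigning positive mass $c$ to a single fibre forces infinitely many disjoint fibres of mass $c$, contradicting finiteness) --- so every invariant measure on $\tilde K$ is supported on the set where $\pi$ is injective, and pushes forward bijectively. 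Continuity of the inverse follows by compactness. Finally I would check that this affine homeomorphism is the one induced by $\Pi$ at the level of traces, i.e. that $\tau \mapsto \tau \circ \Pi$ realises $\pi_*$; this is immediate from the compatibility of $\Pi$ with the canonical traces.

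The main obstacle I expect is the order-theoretic statement: verifying not just that $K_0(A) \cong K_0(B)$ as groups but that the isomorphism is an order isomorphism preserving the unit. The five-lemma argument only gives a group isomorphism a priori, and one must identify it concretely enough to see it sends positive cone to positive cone and back. The key is to recognise both ordered $K_0$-groups as quotients of coboundary-type groups in a way that is manifestly intertwined by $\pi^*$ --- exploiting that $\pi^*$ is an isomorphism on $K^0$ at the topological level and that, by a theorem of Connes/Pimsner--Voiculescu for such crossed products, $K_0(A)_+$ is exactly the image of the positive cone of $K^0(K)$ (indeed of $C(K,\mathbb{Z})$). Once positivity is matched on generators coming from projections in $C(K)$, and since $\Pi$ maps every projection of $A$ to a projection of $B$, surjectivity of $\Pi_*$ on $K_0$ together with real rank zero of $B$ closes the argument.
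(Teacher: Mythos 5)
The paper does not give a direct argument for this lemma: its entire proof is a citation of \cite[Theorem 4]{FloGjeJohSys}, together with the remark that the argument there (for fibres that are single points or copies of $I$) goes through with only minor changes when the fibres are $I^n$. Your self-contained proof via naturality of the Pimsner--Voiculescu sequence is therefore a genuinely different route, and most of it is sound: the equivariant unital $^*$-homomorphism $\Pi : A \to B$ induced by $\pi$ does intertwine the two PV sequences, Theorem~\ref{FloGjeJohSysThm} makes four of the six vertical arrows isomorphisms, and the five lemma gives that $\Pi_* : K_j(A) \to K_j(B)$ is an isomorphism for $j=0,1$. The trace-space argument is also essentially right, except that you should justify the claim that the set of $x \in K$ with $\pi^{-1}\{x\} \cong I^n$ is countable: Theorem~\ref{FloGjeJohSysThm} as stated only says both fibre types occur, and countability of the non-trivial fibres is a feature of the construction in \cite{DPS:nonHom} that your measure argument silently relies on.

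The one genuine gap is the order-theoretic step, namely the containment $K_0(B)_+ \subseteq \Pi_*(K_0(A)_+)$. One of your two branches is circular as written (you cannot "pull back positivity using that $\Pi_*$ is an order isomorphism" while proving that very fact); the other asserts that $K_0(B)$ carries the quotient order from $C(\tilde K, \mathbb{Z})$, but that description is a Cantor-system fact and $\tilde K$ is not a Cantor set, and the appeal to real rank zero of $B$ is backwards --- in this paper real rank zero of these algebras is a \emph{consequence} of the present lemma, not an available input. The step can be closed with ingredients you already have: a nonzero projection $p$ over the simple unital $\mathrm{C}^*$-algebra $B$ satisfies $\sigma(p) > 0$ for every $\sigma \in T(B)$; since $\sigma \mapsto \sigma \circ \Pi$ is a bijection $T(B) \to T(A)$, the element $x = \Pi_*^{-1}[p]$ satisfies $\tau(x) > 0$ for every $\tau \in T(A)$; and because $(K_0(A), K_0(A)_+)$ is a simple dimension group all of whose states come from traces (by \cite{MR1194074}), this forces $x \in K_0(A)_+$. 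With that repair, and with $\Pi_*[1_A] = [1_B]$ from unitality, your argument becomes a complete alternative to the citation.
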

\begin{proof}
The proof for the case that $\pi^{-1}(x)$, $x \in \tilde{K}$, is either a single point or $I$ is given in \cite[Theorem 4]{FloGjeJohSys}. The proof there generalizes with only minor changes to the case when  $\pi^{-1}(x)$ is either a single point or $I^n$ for $n>2$. We note that in \cite{FloGjeJohSys} the notation  $K(K, \varphi)$ is used to denote the $5$-tuple $(K_0(A), K_0(A)_+, [1_A], K_1(A), T(A))$ for $A = C(K) \rtimes_{\varphi} \mathbb{Z}$.
\end{proof}

Let $G_0$ be a simple dimension group, $T$ a countable abelian group and $G_1$ a countable abelian group. There exists a compact finite-dimensional connected metric space $Y$ such that $K^0(Y) \cong \mathbb{Z} \oplus T$ and $K^1(Y)  \cong G_1$. By \cite[Corollary 6.3]{MR1194074}, there exists Cantor minimal system $(K, \psi)$ such that
\[
K_0(C(K) \rtimes_{\psi} \Z) \cong G_0, \qquad K_1(C(K)\rtimes_{\psi} \Z) \cong \Z.
\]
Let $n \in \N$ be large enough so that there exists an embedding $Y \hookrightarrow I^n$ and $(\tilde{K}, \tilde{\psi})$ be the extension of $(K, \psi)$ with $\pi^{-1}(x)$ a point or $\pi^{-1}(x) = I^n$ for each $x\in K$, which is obtained using Theorem \ref{FloGjeJohSysThm}. By Lemma \ref{KthFGJsys}, there is an explicit isomorphism between the $K$-theory of the crossed product $\mathrm{C}^*$-algebras, induced from the factor map $\pi : \tilde{K} \rightarrow K$, which preserves both the order and the class of the unit.
Let $B :=C(\tilde{K}) \rtimes_{\psi}\Z$ and form $B_Y$, the orbit-breaking subalgebra associated to $Y$, where $Y$ is considered as a closed subspace of $\tilde{K}$ via $Y \hookrightarrow I^n \hookrightarrow \tilde{K}$. Since $I^n \cap \psi^k(I^n) = \emptyset$ for $k \neq 0$, we have that $Y\cap \psi^k(Y) = \emptyset$ for $k \neq 0$ and hence  $B_Y$ is simple.

\begin{theorem} \label{thm:KthRR0}
The $K$-theory of the orbit-breaking subalgebra $B_Y$ is 
\[ (K_0(B_Y), K_0(B_Y)_+, [1]) \cong (T \oplus G_0, G_0^+, 1_{G_0}), \qquad K_1(B_Y) \cong G_1. \]
\end{theorem}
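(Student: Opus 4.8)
The plan is to run the six-term exact sequence of Theorem~\ref{PutExtSeq} for the pair $B_Y \subset B = C(\tilde K)\rtimes_{\tilde\psi}\Z$, exactly as was done for the projectionless case in Theorem~\ref{K(A_Y)}, but now feeding in the $K$-theory data coming from the Cantor system. First I would record the inputs: by choice of $(K,\psi)$ and Lemma~\ref{KthFGJsys}, the factor map $\pi$ gives $(K_0(B),K_0(B)_+,[1_B])\cong G_0$ and $K_1(B)\cong\Z$, while $K^0(Y)\cong\Z\oplus T$, $K^1(Y)\cong G_1$. Since $K_1(B)\cong\Z$ is generated by the class of the implementing unitary, I can invoke Corollary~\ref{corOrbitBreakYandPoint}: comparing $Y$ with a one-point subset $\{y\}\subset Y$ (here I use that $B_{\{y\}}$ has $K_0(B_{\{y\}})\cong K_0(B)$ via $(\iota_1)_*$), the map $\Z\cong K_1(B)\to K^0(Y)\cong\Z\oplus T$ in the sequence is $n\mapsto(n,0)$, and the map $K_0(B_Y)\to K_0(B)$ is onto.

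With $\iota_*$ surjective, the six-term sequence splits into
\[
0\longrightarrow \coker\bigl(L:\Z\to\Z\oplus T\bigr)\longrightarrow K_0(B_Y)\xrightarrow{\ \iota_*\ } K_0(B)\cong G_0\longrightarrow 0,
\]
and since $L(n)=(n,0)$ its cokernel is $T$; because $B$ (indeed $C(\tilde K)$) sits inside both algebras and $i_2$ induces an isomorphism on $K_0$ by Lemma~\ref{KthFGJsys}, the composite $(i_1)_*\circ (i_2)_*^{-1}$ splits $\iota_*$, giving $K_0(B_Y)\cong T\oplus G_0$ as groups, and simultaneously $\iota_*$ sends $[1_{B_Y}]$ to $[1_B]=1_{G_0}$, so under this splitting $[1_{B_Y}]=(0,1_{G_0})$. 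The other half of the sequence reads $0\to G_1\to K_1(B_Y)\xrightarrow{\iota_*} K_1(B)\cong\Z$, and since the adjacent map $K_0(B)\to K^0(Y)$ (which is $n\mapsto(n,0)$, hence injective) forces the preceding boundary map $K^1(Y)\to K_0(B_Y)$—wait, rather the relevant point is that $\partial_{\text{OB}}:K_0(B)\to K^1(Y)$ is zero because, by Lemma~\ref{boundaryMapLemma}, it factors through $j^*\circ\partial_{\text{PV}}$ and $\partial_{\text{PV}}:K_0(B)\to K^1(\tilde K)$ vanishes here ($K_0(B)\cong G_0$ maps onto all of $K_0(B)$ from $K^0(\tilde K)$, so the PV map out of it is trivial). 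Hence $\iota_*:K_1(B_Y)\to\Z$ is injective with image a subgroup of $\Z$; in fact the portion $K^0(Y)\to K_0(B_Y)\xrightarrow{\iota_*}K_0(B)\to K^1(Y)$ shows $K^1(Y)\to K_1(B_Y)$ is injective with cokernel embedding in $K_1(B)\cong\Z$, and a rank count against $K_1(A)\cong\Z$ forces $K_1(B_Y)\cong G_1$.

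The order on $K_0$ is the last step, and I expect it to be the subtlest point—though it is purely formal here because $G_0$ is a \emph{simple} dimension group and $B_Y$ is simple with a nonempty tracial state space affinely homeomorphic to $T(B)$ (via the restriction map, by \cite[Theorem 6.2, Theorem 7.10]{Phi:LargeSubalgebras}). Since $\iota_*$ identifies $K_0(B_Y)/T$ with $K_0(B)\cong G_0$ compatibly with the unit and the traces, and since in a simple unital stably finite $\mathcal Z$-stable $\mathrm{C}^*$-algebra the positive cone of $K_0$ is detected by the traces on the part of positive trace together with $0$, the torsion-by-finite-rank structure forces $K_0(B_Y)_+$ to be exactly the pullback of $K_0(B)_+=G_0^+$; equivalently, under the splitting $K_0(B_Y)\cong T\oplus G_0$ the cone is $\{(t,g):g\in G_0^+\}\cong G_0^+$ with unit $1_{G_0}$. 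The main obstacle is verifying this last identification rigorously: one must check that no element with $g=0$ but $t\neq0$ becomes positive, which follows since such an element is torsion and a simple $\mathcal Z$-stable algebra has $K_0$ with no nonzero positive torsion elements (weak unperforation plus simplicity), and that every element with $g>0$ is positive, which follows from $\mathcal Z$-stability (strict comparison) once one lifts it along $\iota_*$. I would also remark that real rank zero of $B_Y$ is immediate from the factor map $\pi:\tilde K\to K$ onto a Cantor system together with the orbit-breaking construction, exactly as in \cite{MR3770169}, so that the ordered group above indeed satisfies the real-rank-zero $K$-theoretic criterion.
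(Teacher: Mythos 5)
Your exact-sequence setup, the identification of the boundary map $L$ via Corollary~\ref{corOrbitBreakYandPoint}, and the vanishing of $\partial_{\text{OB}}$ on $K_0(B)$ (which really just comes from $K^1(\tilde K)\cong K^1(K)=0$) all match the paper, and the conclusion $K_1(B_Y)\cong G_1$ follows correctly from them (your ``rank count'' is unnecessary: the cokernel of $K^1(Y)\to K_1(B_Y)$ is the image of $\iota_*$ in $K_1(B)$, which equals $\ker L=0$ since $L$ is injective). The genuine gap is in your splitting of $0\to T\to K_0(B_Y)\to G_0\to 0$. You assert that $i_2\colon C(\tilde K)\to B$ induces an isomorphism on $K_0$ ``by Lemma~\ref{KthFGJsys}'' and then split $\iota_*$ by $(i_1)_*\circ(i_2)_*^{-1}$. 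Lemma~\ref{KthFGJsys} says nothing of the sort: it compares $K_*(C(K)\rtimes_{\psi}\Z)$ with $K_*(C(\tilde K)\rtimes_{\tilde\psi}\Z)$ via the factor map $\pi$. The map $(i_2)_*\colon K^0(\tilde K)\to K_0(B)$ is the Pimsner--Voiculescu quotient map from $K^0(\tilde K)\cong C(K,\Z)$, a free abelian group of infinite rank, onto the simple dimension group $G_0$; it is surjective but very far from injective, so $(i_2)_*^{-1}$ does not exist. This is exactly where the present situation differs from the point-like case of Theorem~\ref{K(A_Y)}, where $(i_2)_*$ genuinely is an isomorphism $\Z\to\Z$. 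Nor is the splitting automatic algebraically: $G_0$ need not be free (take $G_0=\Z[1/2]$), so $\mathrm{Ext}(G_0,T)$ can be nonzero. The paper supplies the missing input by comparing with the intermediate orbit-breaking algebra $B_{I^n}$ for $Y\subset I^n\subset\tilde K$: since $I^n$ is contractible, $K_0(B_{I^n})\to K_0(B)\cong G_0$ is an isomorphism, and naturality (Lemma~\ref{orbitBreakFuncProp}) makes the composite $G_0\cong K_0(B_{I^n})\to K_0(B_Y)$ a section of $\iota_*$. Some such step is indispensable.

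Two smaller points. First, your positive-cone argument via traces and strict comparison can be made to work, but your claim that a nonzero element $(t,0)$ ``is torsion'' is a false premise --- $T$ is an arbitrary countable abelian group --- so its non-positivity must come from the trace argument (nonzero trace-zero classes cannot be positive in a simple stably finite algebra), not from torsion. The paper's own argument is more elementary and avoids comparison entirely: following Putnam's Cantor-set argument, it lifts $a\in(G_0)_+$ to $c\in K^0(\tilde K)_+$ and takes $b=(i_1)_*(c)\in K_0(B_Y)_+$. Second, the real rank zero remark at the end is not part of the statement being proved and can be dropped.
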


\begin{proof}
The proof is similar to the proof of Theorem 4.1 in \cite{Putnam:MinHomCantor}. Using the commutative diagram
\begin{displaymath} 
\xymatrix{ & C(\tilde{K}) \ar[dl]_{i_1} \ar[dr]^{i_2} &\\
B_Y \ar[rr]_{\iota} & & B,}
\end{displaymath}
and Theorem~\ref{PutExtSeq}, we obtain
\begin{displaymath}
\xymatrix{ \mathbb{Z} \oplus T  \ar[r] & K_0(B_Y) \ar[r]^{\iota_*} & G_0 \ar[d]^0\\
\mathbb{Z}  \ar[u]^L & K_1(B_Y) \ar[l] & G_1. \ar[l]}
\end{displaymath}
Using Corollary \ref{corOrbitBreakYandPoint}, it follows that
\[
0 \rightarrow \mathbb{Z} \rightarrow \Z \oplus T  \rightarrow K_0(B_Y) \rightarrow G_0 \rightarrow 0, 
\]
and $K_1(B_Y)\cong G_1 $. Moreover, the map $\Z \rightarrow \Z \oplus T$ is given by $l \mapsto (l, 0)$, so we have the short exact sequence
\[
0 \rightarrow T \rightarrow K_0(B_Y) \rightarrow G_0 \rightarrow 0.
\]
To complete the first part of the proof, we show that this sequence splits. To do so, consider the maps $Y \rightarrow I^n$ and associated orbit-breaking subalgebras. Lemma \ref{orbitBreakFuncProp} implies that we have 
\begin{displaymath}
\xymatrix{ \ar[r] & \Z \ar[r] \ar[d] & K_0(B_{I^n}) \ar[r] \ar[d] & K_0(C(\tilde{K})\rtimes \Z) \ar[r] \ar@{=}[d] &   \\
\ar[r] & \Z \oplus T \ar[r] &  K_0(B_Y)\ar[r]& K_0(C(\tilde{K})\rtimes \Z) \ar[r] &. }
\end{displaymath}
Moreover, the map $K_0(B_{I^n}) \rightarrow K_0(C(\tilde{K})\rtimes \Z) \cong G_0$ is an isomorphism, which gives us the required splitting.

Next we show that the positive cone of $K_0(B_Y)$ is $(G_0)_+$. We follow the proof of Theorem 4.1 in \cite{Putnam:MinHomCantor}. The result will follow by showing that given $a\in (G_0)_+$ there exists $b\in K_0(B_Y)_+$ such that $\iota_*(b)=a$. As such, let $a\in (G_0)_+$. Observe that for the diagram
\begin{displaymath} 
\xymatrix{ & C(\tilde{K}) \ar[dl]_{i_1} \ar[dr]^{i_2} &\\
B_Y \ar[rr]_{\iota} & & B}
\end{displaymath}
there exists $c\in K^0(\tilde{K})_+$ such that $(i_2)_*(c)=a$. Then the element $b=(i_1)_*(c)$ has the required property. Finally, the class of the unit is respected by the map $\iota_*$ because $\iota$ is unital.
\end{proof}

\begin{corollary}
Let $A$ be a simple, separable, unital $\mathrm{C}^*$-algebra with finite decomposition rank, real rank zero and which satisfies the UCT. Suppose that 
\[ K_0(A) \cong T \oplus G_0, \qquad K_1(A) \cong G_1, \] 
where $T \subset \mathrm{Inf}(K_0(A))$ is a countable abelian group, $G_0$ is a simple dimension group, $G_1$ is a countable abelian group and the order structure and class of the unit of $K_0(A)$ are the same as the simple dimension group $G_0$.
Then there exists an amenable equivalence relation, $\mathcal{R}$, with 
$\mathrm{C}^{*}(\mathcal{R}) \cong A$.
\end{corollary}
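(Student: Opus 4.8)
The plan is to produce the equivalence relation by the construction of Section~\ref{Subsect:RR0}, fed with exactly the data attached to $A$, and then to finish with the classification theorem. First I would read off the invariant of $A$: by hypothesis $K_0(A) \cong T \oplus G_0$ with $T$ a countable group of infinitesimals, $G_0$ a simple dimension group carrying the order and order unit of $K_0(A)$, and $K_1(A) \cong G_1$. There exists a finite-dimensional connected compact metrizable space $Y$ with $K^0(Y) \cong \Z \oplus T$ and $K^1(Y) \cong G_1$. Running the construction that precedes Theorem~\ref{thm:KthRR0} with this $Y$, with a Cantor minimal system $(K,\psi)$ realizing $(G_0, G_0^+, 1_{G_0})$ and $K_1 \cong \Z$ as in \cite[Corollary 6.3]{MR1194074}, and with the non-homogeneous extension $(\tilde K, \tilde\psi)$ of Theorem~\ref{FloGjeJohSysThm} (choosing $n$ so that $Y \hookrightarrow I^n \hookrightarrow \tilde K$), yields the orbit-breaking subalgebra $B_Y = \mathrm{C}^*(\mathcal{R}_Y)$. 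I would take $\mathcal{R} := \mathcal{R}_Y$; it is minimal by Proposition~\ref{equDense} and amenable as an open subequivalence relation of the amenable relation $\mathcal{R}_{\tilde\psi}$. By Theorem~\ref{thm:KthRR0},
\[ (K_0(B_Y), K_0(B_Y)_+, [1_{B_Y}]) \cong (T \oplus G_0, G_0^+, 1_{G_0}) \cong (K_0(A), K_0(A)_+, [1_A]), \qquad K_1(B_Y) \cong G_1 \cong K_1(A). \]

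Second, I would check that $B_Y \in \mathcal{C}$. The space $\tilde K$ is obtained from the Cantor set by replacing the points of countably many $\psi$-orbits with copies of $I^n$, so $\dim \tilde K < \infty$; hence $C(\tilde K) \rtimes_{\tilde\psi} \Z \in \mathcal{C}$, and Theorem~\ref{classifiableTheorem} gives $B_Y \in \mathcal{C}$.

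Third, I would establish that $B_Y$ has real rank zero. Write $B := C(\tilde K) \rtimes_{\tilde\psi} \Z$ and $A' := C(K) \rtimes_\psi \Z$. By Lemma~\ref{KthFGJsys} the factor map $\pi$ induces an isomorphism $\Ell(B) \cong \Ell(A')$, and $A'$, being the crossed product of a Cantor minimal system, has real rank zero; since $B \in \mathcal{C}$, Theorem~\ref{ClassThm} gives $B \cong A'$, so $B$ has real rank zero. The restriction map $T(B) \to T(B_Y)$ is a bijection, while $\iota_* : K_0(B_Y) = T \oplus G_0 \to G_0 = K_0(B)$ is the quotient by the infinitesimal summand $T$; consequently, under the identification $T(B)\cong T(B_Y)$, the image of $K_0(B_Y)$ in $\mathrm{Aff}\, T(B_Y)$ coincides with the image of $K_0(B)$ in $\mathrm{Aff}\, T(B)$, hence is uniformly dense because $B$ has real rank zero. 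Since $B_Y$ is a simple, separable, unital, nuclear, $\mathcal{Z}$-stable $\mathrm{C}^*$-algebra satisfying the UCT, this density forces real rank zero for $B_Y$.

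Finally, I would match Elliott invariants and invoke classification. As $A$ has finite decomposition rank it also lies in $\mathcal{C}$, and both $A$ and $B_Y$ are real rank zero algebras in $\mathcal{C}$ with isomorphic pointed ordered $K_0$-groups and isomorphic $K_1$-groups. For a real rank zero algebra in $\mathcal{C}$ the pairing $r$ is an affine homeomorphism $T(\cdot) \cong S(K_0(\cdot), K_0(\cdot)_+, [1])$ (projections separate traces, and comparison yields surjectivity), so the tracial data are canonically determined by the pointed ordered $K_0$-group; an isomorphism of pointed ordered $K_0$-groups together with an isomorphism of $K_1$-groups therefore assembles into an isomorphism $\Ell(A) \cong \Ell(B_Y)$. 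Theorem~\ref{ClassThm} then produces a $^*$-isomorphism $A \cong B_Y = \mathrm{C}^*(\mathcal{R})$, as desired. The step I expect to be the crux, and would write out most carefully, is the verification that $B_Y$ has real rank zero together with the accompanying fact that in the real rank zero case the trace simplex and pairing of a classifiable algebra are recovered from its ordered $K_0$; the remainder is bookkeeping on top of Theorem~\ref{thm:KthRR0} and Theorem~\ref{ClassThm}.
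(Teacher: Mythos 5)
Your proposal is correct and follows essentially the same route as the paper: build $\mathcal{R}=\mathcal{R}_Y$ from the construction preceding Theorem~\ref{thm:KthRR0}, observe that for real rank zero algebras the tracial simplex and pairing map are redundant in the Elliott invariant, and conclude by Theorem~\ref{ClassThm}. The paper compresses this into two sentences, so your write-up merely supplies details the paper leaves implicit --- in particular the verification that $B_Y$ itself has real rank zero (via density of the image of $K_0$ in $\mathrm{Aff}\,T$ and $\mathcal{Z}$-stability), which the paper asserts in the section preamble without proof.
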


\begin{proof}
Since $A$ has real rank zero, the tracial state space and pairing map in the Elliott invariant of $A$ are redundant. Thus the isomorphism class of $A$ consists of $\mathrm{C}^*$-algebras with real rank zero and isomorphic $K$-theory.  
\end{proof}

\subsection*{Acknowledgements}  The authors thank the Banff International Research Stations and the organizers of the workshop Future Targets in the Classification Program for Amenable $\mathrm{C}^*$-Algebras, where this project was initiated. Thanks also to the University of Victoria and University of Colorado, Boulder for research visits facilitating this collaboration. Work on the project also benefitted from the 2018 conference on Cuntz--Pimsner algebras at the Lorentz Center, which was attended by  the first and third listed authors. 


\begin{thebibliography}{10}

\bibitem{MR1799683}
C.~Anantharaman-Delaroche and J.~Renault.
\newblock {\em Amenable groupoids}, volume~36 of {\em Monographies de
  L'Enseignement Math\'{e}matique [Monographs of L'Enseignement
  Math\'{e}matique]}.
\newblock L'Enseignement Math\'{e}matique, Geneva, 2000.
\newblock With a foreword by Georges Skandalis and Appendix B by E. Germain.

\bibitem{ArBkPh-Z}
D.~Archey, J.~Buck, and N.~C. Phillips.
\newblock Centrally large subalgebras and tracial {$\mathcal Z$} absorption.
\newblock {\em Int. Math. Res. Not. IMRN}, (6):1857--1877, 2018.

\bibitem{ArcPhi2016}
D.~Archey and N.~C. Phillips.
\newblock Permanence of stable rank one for centrally large subalgebras and
  crossed products by minimal homeomorphisms.
\newblock {To appear in J. Operator Theory}.

\bibitem{AusMitra:GrpModGelfDual}
K.~Austin and A.~Mitra.
\newblock Groupoid models of {$\mathrm{C}^*$}-algebras and {G}elfand duality.
\newblock {Preprint arXiv:1804.00967}, 2018.

\bibitem{BarLi:CartanUCT1}
S.~Barlak and X.~Li.
\newblock Cartan subalgebras and the {UCT} problem.
\newblock {\em Adv. Math.}, 316:748--769, 2017.

\bibitem{BarLi:CartanUCT2}
S.~Barlak and X.~Li.
\newblock Cartan subalgebras and the {UCT} problem, {II}.
\newblock {Preprint arXiv:1704.04939}, 2017.

\bibitem{Bla:K-theory}
B.~Blackadar.
\newblock {\em {$K$}-theory for operator algebras}, volume~5 of {\em
  Mathematical Sciences Research Institute Publications}.
\newblock Cambridge University Press, Cambridge, second edition, 1998.

\bibitem{BBSTWW:2Col}
J.~Bosa, N.~P. Brown, Y.~Sato, A.~Tikuisis, S.~White, and W.~Winter.
\newblock {Covering dimension of $\mathrm{C}^*$-algebras and 2-coloured classification}.
\newblock {\em Mem. Amer. Math. Soc.}, 257(1233):vii+97, 2019.

\bibitem{CETWW}
J.~Castillejos, S.~Evington, A.~Tikuisis, S.~White, and W.~Winter.
\newblock Nuclear dimension of simple {$\mathrm{C}^*$}-algebras.
\newblock {Preprint arXiv:1901.05853v2}, 2019.

\bibitem{MR605351}
A.~Connes.
\newblock An analogue of the {T}hom isomorphism for crossed products of a
\mbox{{$\mathrm{C}^*$}-algebra} by an action of {${\bf R}$}.
\newblock {\em Adv. in Math.}, 39(1):31--55, 1981.

\bibitem{MR662736}
A.~Connes, J.~Feldman, and B.~Weiss.
\newblock An amenable equivalence relation is generated by a single
  transformation.
\newblock {\em Ergodic Theory Dynamical Systems}, 1(4):431--450 (1982), 1981.

\bibitem{DPS:DynZ}
R.~J. Deeley, I.~F. Putnam, and K.~R. Strung.
\newblock {Constructing minimal homeomorphisms on point-like spaces and a
  dynamical presentation of the Jiang--Su algebra}.
\newblock {\em {J. Reine Angew. Math.}}, {742}:{241--261}, {2018}.

\bibitem{DPS:nonHom}
R.~J. Deeley, I.~F. Putnam, and K.~R. Strung.
\newblock Non-homogeneous extensions of {C}antor minimal systems.
\newblock {To appear in Proc. Amer. Math. Soc.}, 2019.

\bibitem{DPS:MinHomKtheory}
R.~J. Deeley, I.~F. Putnam, and K.~R. Strung.
\newblock Minimal homeomorphisms and topological {$K$}-theory.
\newblock {P}reprint, 2020.

\bibitem{EffrosHahn1967}
E.~G. Effros and F.~Hahn.
\newblock {\em Locally compact transformation groups and {$\mathrm{C}^*$}-
  algebras}.
\newblock Memoirs of the American Mathematical Society, No. 75. American
  Mathematical Society, Providence, R.I., 1967.

\bibitem{Ell:AF}
G.~A. Elliott.
\newblock {On the classification of inductive limits of sequences of semisimple
  finite-dimensional algebras}.
\newblock {\em J. Algebra}, 38:29--44, 1976.

\bibitem{EllGonLinNiu:ClaFinDecRan}
G.~A. Elliott, G.~Gong, H.~Lin, and Z.~Niu.
\newblock On the classification of simple amenable {$\mathrm{C}^*$}-algebras
  with finite decomposition rank {II}.
\newblock {Preprint arXiv:1507.03437v2}, 2015.

\bibitem{EllNiu:MeanDimZero}
G.~A. Elliott and Z.~Niu.
\newblock The {$\mathrm{C}^*$}-algebra of a minimal homeomorphism of zero mean
  dimension.
\newblock {\em Duke Math. J.}, 166(18):3569--3594, 2017.

\bibitem{FatHer:Diffeo}
A.~Fathi and M.~R. Herman.
\newblock Existence de diff\'{e}omorphismes minimaux.
\newblock pages 37--59. Ast\'{e}risque, No. 49, 1977.

\bibitem{floyd1949}
E.~E. Floyd.
\newblock A nonhomogeneous minimal set.
\newblock {\em Bull. Amer. Math. Soc.}, 55(10):957--960, 10 1949.

\bibitem{GioKerr:Subshifts}
J.~Giol and D.~Kerr.
\newblock {Subshifts and perforation}.
\newblock {\em J. Reine Angew. Math.}, 639:107--119, 2010.

\bibitem{MR2054051}
T.~Giordano, I.~Putnam, and C.~Skau.
\newblock Affable equivalence relations and orbit structure of {C}antor
  dynamical systems.
\newblock {\em Ergodic Theory Dynam. Systems}, 24(2):441--475, 2004.

\bibitem{GioPutSkau:orbit}
T.~Giordano, I.~F. Putnam, and C.~F. Skau.
\newblock {Topological orbit equivalence and \mbox{$\mathrm{C}^*$-crossed} products}.
\newblock {\em J. Reine Angew. Math.}, 469:51--111, 1995.

\bibitem{FloGjeJohSys}
R.~Gjerde and {\O}.~Johansen.
\newblock {$\mathrm{C}^*$}-algebras associated to non-homogeneous minimal
  systems and their {K}-theory.
\newblock {\em Math. Scand.}, 85(1):87--104, 1999.

\bibitem{GlaWei:MinSkePro}
S.~Glasner and B.~Weiss.
\newblock {On the construction of minimal skew products}.
\newblock {\em Israel J. Math.}, 34(4):321--336, 1980.

\bibitem{GongJiangSu:Z}
G.~Gong, X.~Jiang, and H.~Su.
\newblock Obstructions to {$\mathcal{Z}$}-stability for unital simple
  \mbox{$\mathrm{C}^*$-algebras}.
\newblock {\em Canadian Math. Bull.}, 43(4):418--426, 2000.

\bibitem{GongLinNiue:ZClass}
G.~Gong, H.~Lin, and Z.~Niu.
\newblock Classification of finite simple amenable {$\mathcal{Z}$}-stable
  \mbox{{$\mathrm{C}^*$}-algebras}.
\newblock {Preprint arXiv:1501.00135v6}, 2015.

\bibitem{MR1742309}
M.~Gromov.
\newblock Topological invariants of dynamical systems and spaces of holomorphic
  maps. {I}.
\newblock {\em Math. Phys. Anal. Geom.}, 2(4):323--415, 1999.

\bibitem{MR1194074}
R.~H. Herman, I.~F. Putnam, and C.~F. Skau.
\newblock Ordered {B}ratteli diagrams, dimension groups and topological
  dynamics.
\newblock {\em Internat. J. Math.}, 3(6):827--864, 1992.

\bibitem{HirWinZac:RokDim}
I.~Hirshberg, W.~Winter, and J.~Zacharias.
\newblock {Rokhlin dimension and $\mathrm{C}^*$-dynamics}.
\newblock {\em Comm. Math. Phys.}, 335(2):637--670, 2015.

\bibitem{Izu:FreeRok1}
M.~Izumi.
\newblock {Finite group actions on {$\mathrm{C}^*$}-algebras with the {R}ohlin property.
  {I}},.
\newblock {\em Duke Math. J.}, 122(2):233--280, 2004.

\bibitem{JiaSu:Z}
X.~Jiang and H.~Su.
\newblock On a simple unital projectionless {$\mathrm{C}^*$}-algebra.
\newblock {\em Amer. J. Math.}, 121(2):359--413, 1999.

\bibitem{KirWinter:dr}
E.~Kirchberg and W.~Winter.
\newblock Covering dimension and quasidiagonality.
\newblock {\em Internat. J. Math.}, 15(1):63--85, 2004.

\bibitem{Kis:RohlinUHF}
A.~Kishimoto.
\newblock The {R}ohlin property for shifts on {UHF} algebras and automorphisms
  of {C}untz algebras.
\newblock {\em J. Funct. Anal.}, 140(1):100--123, 1996.

\bibitem{Kri:DimFun}
W.~Krieger.
\newblock On dimension functions and topological {M}arkov chains.
\newblock {\em Invent. Math.}, {56}({3}):{239--250}, 1980.

\bibitem{Li:Cartan}
X.~Li.
\newblock Constructing {C}artan subalgebras in classifiable stably finite
  {$\mathrm{C}^*$}-algebras.
\newblock {Preprint arXiv:1802.01190v2}, 2018.

\bibitem{Lin:MinDyn}
H.~Lin.
\newblock Crossed products and minimal dynamical systems.
\newblock {\em J. Topol. Anal.}, 10(2):447--469, 2018.

\bibitem{LinWeiss:MTD}
E.~Lindenstrauss and B.~Weiss.
\newblock Mean topological dimension.
\newblock {\em Israel J. Math.}, 115:1--24, 2000.

\bibitem{MR3273581}
H.~Matui and Y.~Sato.
\newblock Decomposition rank of {UHF}-absorbing {$\mathrm{C}^*$}-algebras.
\newblock {\em Duke Math. J.}, 163(14):2687--2708, 2014.

\bibitem{MvN:Rings4}
F.~J. Murray and J.~von Neumann.
\newblock On rings of operators. {IV}.
\newblock {\em Ann. of Math. (2)}, {44}:{716--808}, 1943.

\bibitem{MR2320644}
N.~C. Phillips.
\newblock Cancellation and stable rank for direct limits of recursive
  subhomogeneous algebras.
\newblock {\em Trans. Amer. Math. Soc.}, 359(10):4625--4652, 2007.

\bibitem{Phi:LargeSubalgebras}
N.~C. Phillips.
\newblock Large subalgebras.
\newblock {Preprint arXiv:1408.5546}, 2014.

\bibitem{Pimsner1980}
M.~Pimsner and D.~Voiculescu.
\newblock Exact sequences for {$K$}-groups and {E}xt-groups of certain
  cross-product {$\mathrm{C}^*$}-algebras.
\newblock {\em J. Operator Theory}, 4(1):93--118, 1980.

\bibitem{Putnam:MinHomCantor}
I.~F. Putnam.
\newblock {The $\mathrm{C}^*$-algebras associated with minimal homeomorphisms of the
  Cantor set}.
\newblock {\em Pacific J. Math.}, 136(2):329--353, 1989.

\bibitem{Put:Excision}
I.~F. Putnam.
\newblock {An excision theorem for the {$K$}-theory of {$\mathrm{C}^*$}-algebras}.
\newblock {\em {J. Operator Theory}}, {38}({1}):{151--171}, {1997}.

\bibitem{Put:K-theoryGroupoids}
I.~F. Putnam.
\newblock On the {$K$}-theory of {$\mathrm{C}^*$}-algebras of principal groupoids.
\newblock {\em {Rocky Mountain J. Math.}}, {28}({4}):{1483--1518}, {1998}.

\bibitem{MR3770169}
I.~F. Putnam.
\newblock Some classifiable groupoid {$\mathrm{C}^*$}-algebras with prescribed
  {$K$}-theory.
\newblock {\em Math. Ann.}, 370(3-4):1361--1387, 2018.

\bibitem{MR584266}
J.~Renault.
\newblock {\em A groupoid approach to {$\mathrm{C}^*$}-algebras}, volume 793 of
  {\em Lecture Notes in Mathematics}.
\newblock Springer, Berlin, 1980.

\bibitem{RosSho:UCT}
J.~Rosenberg and C.~Schochet.
\newblock {The K\"unneth Theorem and the Universal Coefficient Theorem for
  Kasparov's generalized $K$-functor}.
\newblock {\em Duke Math. J.}, 55(2):431--474, 1987.

\bibitem{MR3418247}
Y.~Sato, S.~White, and W.~Winter.
\newblock Nuclear dimension and {$\mathcal{Z}$}-stability.
\newblock {\em Invent. Math.}, 202(2):893--921, 2015.

\bibitem{Takesaki1967}
M.~Takesaki.
\newblock Covariant representations of {$\mathrm{C}^*$}-algebras and their locally
  compact automorphism groups.
\newblock {\em Acta Math.}, 119:273--303, 1967.

\bibitem{TWW}
A.~Tikuisis, S.~White, and W.~Wilhelm.
\newblock {Quasidiagonality of nuclear $\mathrm{C}^*$-algebras}.
\newblock {\em {Ann. of Math.}}, 185(1):229--284, 2017.

\bibitem{TomsWinter:PNAS}
A.~S. Toms and W.~Winter.
\newblock Minimal dynamics and the classification of {$\mathrm{C}^*$}-algebras.
\newblock {\em Proc. Natl. Acad. Sci. USA}, 106(40):16942--16943, 2009.

\bibitem{TomsWinter:minhom}
A.~S. Toms and W.~Winter.
\newblock Minimal {D}ynamics and {K}-{T}heoretic {R}igidity: {E}lliott's
  {C}onjecture.
\newblock {\em Geom. Funct. Anal.}, 23(1):467--481, 2013.

\bibitem{Tu:Groupoids}
J.-L. Tu.
\newblock {La conjecture de {B}aum-{C}onnes pour les feuilletages moyennables}.
\newblock {\em {$K$-Theory}}, {17}({3}):{215--264}, {1999}.

\bibitem{Win:cpr}
W.~Winter.
\newblock Covering dimension for nuclear {$\mathrm{C}^*$}-algebras.
\newblock {\em J. Funct. Anal}, 199(2):535--556, 2003.

\bibitem{Win:Z-stabNucDim}
W.~Winter.
\newblock {Nuclear dimension and {$\mathcal{Z}$}-stability of pure
  {$\mathrm{C}^*$}-algebras}.
\newblock {\em Invent. Math.}, 187(2):259--342, 2012.

\bibitem{WinterZac:dimnuc}
W.~Winter and J.~Zacharias.
\newblock The nuclear dimension of {$\mathrm{C}^*$}-algebras.
\newblock {\em Adv. Math.}, 224(2):461--498, 2010.

\bibitem{Zeller-Meier1968}
G.~Zeller-Meier.
\newblock Produits crois\'{e}s d'une {$\mathrm{C}^*$}-alg\`ebre par un groupe
  d'automorphismes.
\newblock {\em J. Math. Pures Appl. (9)}, 47:101--239, 1968.

\end{thebibliography}

\end{document}